\DeclareMathOperator{\Aut}{Aut}
\DeclareMathOperator{\End}{End}
\DeclareMathOperator{\tr}{tr}
\DeclareMathOperator{\pr}{pr}
\DeclareMathOperator{\im}{im}
\DeclareMathOperator{\id}{id}
\DeclareMathOperator{\rank}{rank}
\DeclareMathOperator{\hol}{hol}
\DeclareMathOperator{\ad}{ad}
\DeclareMathOperator{\vect}{Vect}
\DeclareMathOperator{\ovect}{\Omega Vect}
\DeclareMathOperator{\struct}{Struct}
\DeclareMathOperator{\Iso}{Iso}
\DeclareMathOperator{\Hom}{Hom}
\DeclareMathOperator{\Der}{Der}
\DeclareMathOperator{\Gr}{Gr}
\theoremstyle{plain}
\newtheorem{theorem}{Theorem}[section]
\newtheorem{corollary}[theorem]{Corollary}
\newtheorem{lemma}[theorem]{Lemma}
\newtheorem{proposition}[theorem]{Proposition}
\theoremstyle{definition}
\newtheorem{definition}[theorem]{Definition}
\theoremstyle{remark}
\newtheorem{example}[theorem]{Example}
\newtheorem{remark}[theorem]{Remark}
\numberwithin{equation}{section}
\numberwithin{figure}{section}
\renewcommand{\cR}{\mathcal{R}}
\newcommand{\cS}{{\mathcal S}}
\newcommand{\cK}{{\mathcal K}}
\renewcommand{\cL}{{\mathcal L}}
\newcommand{\cF}{{\mathcal{F}}}
\newcommand{\CC}{{\mathbb C}}
\newcommand{\RR}{{\mathbb R}}
\newcommand{\ZZ}{{\mathbb Z}}
\newcommand{\sQ}{{\mathsf Q}}
\newcommand{\sP}{{\mathsf P}}
\newcommand{\sE}{{\mathsf E}}
\newcommand{\sF}{{\mathsf F}}
\newcommand{\sA}{{\mathsf A}}
\newcommand{\sS}{{\mathsf S}}
\newcommand{\sR}{{\mathsf R}}
\newcommand{\sL}{{\mathsf L}}
\newcommand{\sH}{{\mathsf H}}
\newcommand{\rk}{{\mathsf {rank}\,}}
\newcommand{\ev}{{\mathsf {ev}}}
\newcommand{\de}{{\mathsf {d}}}
\renewcommand{\a}{\alpha}
\renewcommand{\b}{\beta}
\renewcommand{\c}{\gamma}
\renewcommand{\d}{\partial}
\newcommand{\lo}{\longrightarrow}
\newcommand{\sone}{{S^1}}
\newcommand{\x}{\times}
\newcommand{\CS}{{C\mspace{-1mu}S}}
\newcommand{\Ch}{{C\mspace{-1mu}h}}
\newcommand{\I}{{[0,1]}}
\newcommand{\deR}{{\mathrm{deR}}}
\newcommand{\dK}{\check{\mathcal{K}}^{-1}}
\newcommand{\dL}{\check{\mathcal{L}}^{-1}}
\newcommand{\edK}{\check{\mathcal{K}}^{0}}
\newcommand{\simto}{\xrightarrow{\;\;\cong\;\;}}
\renewcommand{\t}{\triangle}
\newcommand{\oast}{\circledast}
\newcommand{\ul}[1]{\underline{{#1}}}
\newcommand{\lan}{\langle}
\newcommand{\ran}{\rangle}
\newcommand{\llan}{\lan\mspace{-5mu}\lan}
\newcommand{\rran}{\ran\mspace{-5mu}\ran}
\begin{document}

\title[A Geometric Model for Odd Differential $K$-theory]{A Geometric Model for Odd Differential $K$-theory}
 \author[P. Hekmati]{Pedram Hekmati}
  \address[P. Hekmati]
  {School of Mathematical Sciences\\
  University of Adelaide\\
  Adelaide, SA 5005 \\
  Australia}
  \email{pedram.hekmati@adelaide.edu.au}

 \author[M. K. Murray]{Michael K. Murray}
  \address[M. K. Murray]
  {School of Mathematical Sciences\\
  University of Adelaide\\
  Adelaide, SA 5005 \\
  Australia}
  \email{michael.murray@adelaide.edu.au}
  
  \author[V. S. Schlegel]{Vincent S. Schlegel}
\address[V. S. Schlegel]
{Institut f\"{u}r Mathematik\\
Universit\"{a}t Z\"{u}rich\\
Winterthurerstrasse 190\\
8057 Z\"{u}rich \\
Switzerland}
\email{vincent.schlegel@math.uzh.ch}

 \author[R. F. Vozzo]{Raymond F. Vozzo}
  \address[R. F. Vozzo]
{School of Mathematical Sciences\\
  University of Adelaide\\
  Adelaide, SA 5005 \\
  Australia}
\email{raymond.vozzo@adelaide.edu.au}

\date{\today}

\thanks{The authors acknowledge support under the Australian Research Council's Discovery funding schemes: DP120100106, DP130102578 and DE120102657.}

\subjclass[2010]{19L50 (19L10, 22E67, 57R19, 57R20, 81T30)}

\begin{abstract}
Odd $K$-theory has the interesting property that it admits an infinite number of inequivalent differential refinements. In this paper we provide a bundle theoretic model for odd differential $K$-theory using the  caloron correspondence and prove that this refinement is unique up to a unique natural isomorphism.  We characterise the odd Chern character and its transgression form in terms of a connection and Higgs field and discuss some applications. Our model can be seen as the odd counterpart to the Simons-Sullivan construction of even differential $K$-theory. We use this model to prove a conjecture of Tradler--Wilson--Zeinalian \cite{TWZ}, which states that the model developed there also defines the unique differential extension of odd $K$-theory.

\end{abstract} 
\maketitle

\tableofcontents


\renewcommand{\theequation}{\emph{i}\!\! .\arabic{equation}}
\section*{Introduction}
Since their inception in the guise of the differential characters of Cheeger--Simons \cite{ChS1}, differential cohomology theories have become an increasingly important tool in mathematics and mathematical physics.
Put simply, differential cohomology theories are refinements of generalized (Eilenberg--Steenrod) cohomology theories that naturally include extra differential form data.
Using abstract formalism, Hopkins and Singer constructed in \cite{HS} a differential cohomology theory associated to any given generalized cohomology theory.
Bunke and Schick developed in \cite{BS1} an axiomatic characterisation for differential extensions and later proved \cite{BS3} that, under certain conditions, such extensions are unique up to unique isomorphism.

Recently there has been a vigorous discussion on the properties and applications of differential extensions, centering particularly on differential extensions of topological $K$-theory.
There is already a variety of different models for differential $K$-theory appearing in the literature \cite{BS3, FL,HS}.
As is the case with ordinary $K$-theory, the group structure of differential $K$-theory splits into odd and even degree parts.
The Bunke--Schick uniqueness results are enough to guarantee that any two differential extensions of even $K$-theory are isomorphic, however this is not the case for odd $K$-theory where extra data is required to obtain uniqueness.

A primary consideration when constructing differential extensions is to obtain an intuitive geometric model that allows for straightforward calculations.
A particularly intuitive model for even differential $K$-theory is provided by Simons and Sullivan \cite{SSvec}.
The Simons--Sullivan model uses \emph{structured vector bundles}---smooth vector bundles equipped with an equivalence class of connections defined by Chern--Simons exactness---to incorporate differential form data into ordinary even $K$-theory.
This model has an obvious geometric appeal and its presentation avoids the additional differential forms appearing in other models, such as \cite{FL}.

In this paper, we introduce a natural odd-degree counterpart to the Simons--Sullivan model and show that it defines odd differential $K$-theory.
This bundle-theoretic construction is achieved using \emph{structured $\Omega$ vector bundles}.
Structured $\Omega$ vector bundles, or more simply \lq\lq $\Omega$ bundles'' are best viewed as the result of applying a sort of smooth suspension to the structured vector bundles of Simons--Sullivan.
The advantage of thinking in terms of $\Omega$ bundles is that it allows one to understand the odd differential $K$-theory of a manifold $M$ through differential-geometric structures living on $M$ as opposed to the suspension $\Sigma M$.
Ultimately the choice  to use $\Omega$ bundles is mostly aesthetic and everything could be done using regular bundles over $\Sigma M$, however we find that the use of $\Omega$ bundles simplifies certain arguments and emphasises the link to the Simons--Sullivan model.

In concrete terms, $\Omega$ bundles are Fr\'{e}chet vector bundles with the additional property that each fibre is a free finitely generated $L\CC$-module, with $L\CC$ the ring of smooth loops in $\CC$.
Equivalently, $\Omega$ bundles may be viewed as the vector bundle objects naturally associated to principal $\Omega GL(n)$-bundles via the associated bundle construction.
Here $\Omega GL(n)$ is the Fr\'{e}chet Lie group of smooth loops in $GL(n)$ based at the identity

A central tool in this work is the \emph{caloron correspondence} of \cite{MV, V}, whereby an $\Omega GL(n)$-bundle $\sQ \to M$ determines a framed $GL(n)$-bundle\footnote{recall (cf.  \cite[Definition 3.1]{MV}) that a $G$-bundle $Q\to M\x\sone$ is \emph{framed} if it is equipped with a choice of section $s \in \Gamma(M\x\{0\},P)$.} $Q \to M\x\sone$ and, conversely, a framed $GL(n)$-bundle $P \to M\x\sone$ determines an $\Omega GL(n)$-bundle $\sP \to M$.
The assignments $\sQ \mapsto Q$ and $P \mapsto \sP$ are functorial and are called the \emph{caloron transform} and \emph{inverse caloron transform} respectively.
These functors give an equivalence of categories and a key property of this equivalence is that it persists at the level of connective data.
Briefly, recall that a Higgs field for the $\Omega GL(n)$-bundle  $\sQ \to M$ is a map $\Phi \colon \sQ\to L\mathfrak{gl}(n)$ satisfying the condition
\begin{equation}
\label{eqn:twistedeq}
\Phi(q\c) = \ad(\c^{-1})\Phi(q) + \c^{-1}\d\c
\end{equation}
for $q\in \sQ$ and $\c\in \Omega GL(n)$, where $\d$ is differentiation in the circle direction.
Denoting by $\Theta$ the (left-invariant) Maurer-Cartan form on $GL(n)$, a Higgs field $\Phi$ and an $\Omega GL(n)$-connection $\sA$ on $\Omega GL(n)$-bundle together determine the form
\begin{equation}
\label{eqn:finiteconnconstruct}
A_{(q,\theta,g)} = \ad(g^{-1}) \big( \sA_q(\theta) + \Phi(q)(\theta) d\theta\big) + \Theta_g
\end{equation}
on $\sQ \x\sone \x GL(n)$.
This form descends to the quotient $Q = (\sQ \x\sone \x GL(n))/\Omega GL(n)$ and determines a framed $GL(n)$-connection.
Conversely, a framed $GL(n)$-connection determines a Higgs field and $\Omega GL(n)$-connection on the inverse caloron transform bundle.
The reader is encouraged to think of the caloron correspondence as a smooth version of the suspension construction in topology that moreover includes the connective structure.

An outline of this paper is as follows.
In Section \ref{S:omega} we develop the theory of $\Omega$ bundles and their caloron transforms following \cite{Sch}.
As part of this discussion, we introduce the appropriate notion of connective data on $\Omega$ bundles.
We show that stable isomorphism classes of $\Omega$ bundles over $M$ under the direct sum operation give a smooth model for the odd $K$-theory of any smooth compact manifold $M$ and compute the odd Chern character.
This gives a very natural and straightfoward bundle-theoretic interpretation of the identity $K^{-1}(M) \cong [M,B\Omega GL] \cong [M, GL]$.

Following this, in Section \ref{S:string} we introduce the \emph{string potential form}, which plays the role of the Chern--Simons form for $\Omega$ bundles (see also \cite{Steve} for a related construction). 
Using this string potential and following the techniques of Simons--Sullivan, in Section \ref{S:model} we construct a differential extension of odd $K$-theory, which we call the $\Omega$ model.
Analogously to the Simons--Sullivan constrcution, the $\Omega$ model is built out of stable isomorphism classes of \emph{structured} $\Omega$ bundles, which are equipped with an equivalence class of connective data defined by exactness of the string potential form.

Whilst bearing a close similarity to that of the Simons--Sulivan model, the construction of the $\Omega$ model is not simply a straightforward generalisation.
In particular, Theorem \ref{theorem:inversesexist} on the existence of inverses to structured $\Omega$ bundles requires substantially more work than in the finite rank case.
Another difficulty peculiar to this setting is that while differential $K$-theory can be characterised uniquely up to unique isomorphism, differential \emph{extensions} of odd $K$-theory are not unique; this is not the case for differential extensions of even $K$-theory.
In the proof of Theorem \ref{theorem:odddiffk} we use the caloron correspondence to construct a canonical isomorphism to (some fixed model of) odd differential $K$-theory that factors through the Simons--Sullivan model.
In this fashion, we establish that the $\Omega$ model is indeed a model for odd differential $K$-theory.

Finally, in Section \ref{TWZ} we use the $\Omega$ model to complete the work of Tradler--Wilson--Zeinalian by proving their conjecture of \cite{TWZ}. In loc.~cit.~the authors construct a differential extension of odd $K$-theory via certain equivalence classes of smooth classifying maps into $GL$ and state that this construction should be odd differential $K$ theory.
After recalling the TWZ construction, we provide an explicit isomorphism to the $\Omega$ model, verifying that it does indeed define odd differential $K$-theory.
The proof ought to be viewed as a refined version of the homotopy equivalence $GL\cong \Omega BGL$.

\addtocontents{toc}{\protect\setcounter{tocdepth}{-2}} 
\subsection*{Terminology and conventions}
Throughout this paper, we let $M$ be a compact finite-dimensional manifold, possibly with corners. All vector bundles are taken to be complex and we denote by $\underline{\CC}^n$ the trivial vector bundle $M\x {\CC}^n \to M$.
Unless stated otherwise, everything is assumed to be taking place in the smooth category; using smooth manifolds, smooth vector bundles, etc.
The circle group $\sone$ is  regarded as the quotient $\RR/2\pi\ZZ$ and is equipped with basepoint $0$.
Integration over the fibre for fibrations of the form $M\x\sone \to M$ is always taken with respect to the canonical orientation on $\sone$ inherited from $\RR$.

\addtocontents{toc}{\protect\setcounter{tocdepth}{2}} 

\renewcommand{\theequation}{\arabic{section}.\arabic{equation}}
\section{$\Omega$ vector bundles}
\label{S:omega}
In order to develop the notions underlying our geometric model for odd differential $K$-theory, we first give a simple characterisation of odd $K$-theory $K^{-1}(M)$ in terms of \lq\lq $\Omega$ vector bundles'' (or simply \lq\lq $\Omega$ bundles") that is completely analogous to the description of even $K$-theory $K^0(M)$ as the group of smooth virtual vector bundles over $M$.
Since the underlying objects are smooth bundles, as we shall see this characterisation allows us to define odd differential $K$-theory by introducing the appropriate notion of connective data on $\Omega$ bundles. 
What follows is a brief account of the theory developed in \cite[Chapter 3]{Sch} and the interested reader is referred there for more detail.

To motivate our construction recall that topologically we have
$$
K^{-1}(M) :=\widetilde K^0\big(\Sigma M^+\big) = \widetilde K^0\left( (M \times S^1 ) / M_0 \right) .
$$
Here $M^+ :=M\sqcup \{\ast\}$ is a pointed space with the adjoined basepoint $\{\ast\}$,  $\Sigma M^+$ is its reduced suspension and $M_0 = M \times \{ 0 \}$. 
The tilde denotes reduced $K$-theory, namely the kernel of the pullback-to-basepoint map or equivalently the restriction to virtual bundles of virtual rank zero. The second equality follows from the obvious homeomorphism $\Sigma M^+ \simeq (M\x\sone)/M_0$.
Of course the quotient  $(M \times S^1 ) / M_0$ is rarely a manifold, so we deal with the question of smoothness by regarding  bundles on $(M \times S^1 ) / M_0$ as bundles on $M \times S^1$ that are trivial on $M_0$. 
More precisely, we have

\begin{definition}
\label{defn:vecframe}
A (smooth) {\em framed vector bundle} of rank $n$ over $M \times S^1$ is a pair consisting of a vector bundle $E \to M \times S^1$ of rank $n$ and an isomorphism 
$E_{|M_0} \simeq M_0 \times \CC^n$ where $M_0 = M \times\{ 0 \}$.
A choice of isomorphism $E_{|M_0} \simeq M_0 \times \CC^n$ is a \emph{framing} of $E$ over $M_0$.
\end{definition}

We are interested in obtaining a characterisation of $K^{-1}(M)$ in terms of smooth bundles on $M$, not on $M \times S^1$.
We can pass from one to the other by analogy with the push down or direct image operation of algebraic
geometry.  
If $E \to M \times S^1$ is a vector bundle of rank $n$ and $m \in M$ we define 
\begin{equation}
\label{eqn:1stexample}
\sE_m := \Gamma( \{m \} \times S^1, E)
\end{equation}
and let $\sE$ be the disjoint union of all the $\sE_m$ with the obvious projection map to $M$.  Notice that if $U \subseteq M$ is open 
and $s \in \Gamma(U \times S^1, E)$, then $\check s $  defined by $\check s(m)(\theta) = s(m, \theta)$ 
is a section of $\sE$ over $U$. 
Smoothness of $\sE$ is determined by requiring that these be exactly all the smooth sections of $\sE$ over $U$. 

To be able to characterise precisely which kinds of bundles arise in this way we need to explore the structure 
of the fibres.  They all have the form $\Gamma(S^1, F)$ for some complex vector bundle $F \to S^1$ which we could, of course, take to be trivial. 
We note that they are all acted on by the ring $L\CC := C^\infty(S^1, \CC)$ and  our first observation is essentially the Serre-Swan theorem
\begin{proposition}
\label{theorem:module}
An $L\CC$-module $N$ is isomorphic to the space of sections $\Gamma(S^1, F)$ of some vector bundle $F \to S^1$ if and only if $N$ is finitely generated and free.
\end{proposition}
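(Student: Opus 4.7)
The key mechanism here is the well-known triviality of complex vector bundles over $S^1$, combined with a direct identification of the module of sections of a trivial bundle with a free module over $L\CC$. My plan is to treat the two implications separately, with the forward implication being essentially a construction and the reverse implication relying on the topological classification of bundles over $S^1$.

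For the easy direction, suppose $N$ is a finitely generated free $L\CC$-module, say $N \cong (L\CC)^n$. I would take $F$ to be the trivial bundle $S^1 \x \CC^n \to S^1$ and observe that
\begin{equation*}
\Gamma(S^1, F) = C^\infty(S^1, \CC^n) \cong \bigoplus_{i=1}^n C^\infty(S^1, \CC) = (L\CC)^n \cong N,
\end{equation*}
with the isomorphism of $L\CC$-modules given by pointwise scalar multiplication of functions. Nothing subtle is going on here.

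For the converse, suppose $N \cong \Gamma(S^1, F)$ for some complex vector bundle $F \to S^1$ of rank $n$. The crux is that every complex vector bundle over $S^1$ is trivializable: under the clutching construction, rank-$n$ complex bundles over $S^1$ are classified by $\pi_0(GL(n,\CC))$, which is trivial since $GL(n,\CC)$ is path-connected. Choosing any trivialization $F \simto S^1 \x \CC^n$ induces an $L\CC$-module isomorphism $\Gamma(S^1, F) \cong C^\infty(S^1,\CC^n) = (L\CC)^n$, which exhibits $N$ as finitely generated and free of rank $n$. Here $L\CC$ being a commutative ring provides invariant basis number, so the rank $n$ is well-defined.

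The main obstacle, such as it is, is really just citing the connectedness of $GL(n,\CC)$ (or equivalently the triviality of complex line-bundle-like objects over $S^1$) correctly; the rest of the proof is formal, since the Serre–Swan style identification for a trivial bundle over any base between smooth sections and tuples of smooth functions is immediate. I would mention in passing that the rank $n$ of the free $L\CC$-module matches the rank of the underlying vector bundle, a fact that will be useful later when relating the rank of an $\Omega$ bundle to the rank of its caloron transform.
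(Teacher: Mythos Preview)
Your proof is correct. The paper does not actually give a proof of this proposition; it simply introduces it with the remark that it ``is essentially the Serre--Swan theorem.'' Your argument makes explicit the one extra ingredient needed beyond Serre--Swan: the standard Serre--Swan correspondence only gives that $\Gamma(S^1,F)$ is finitely generated and \emph{projective} over $L\CC$, and it is precisely the connectedness of $GL(n,\CC)$ (equivalently, the triviality of complex vector bundles over $S^1$) that upgrades ``projective'' to ``free.'' So your approach is the natural unpacking of the paper's one-line attribution, and there is nothing to correct.
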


The following gives a characterisation of the automorphisms of the fibres of bundles constructed via \eqref{eqn:1stexample}.

\begin{proposition} 
\label{prop:aut-mod}
A $\CC$-linear automorphism $f \colon L\CC^n \to L\CC^n$ is an $L\CC$-module automorphism if and only 
if it arises as pointwise multiplication by an element of $L GL(n) := C^\infty(S^1, GL(n))$.
\end{proposition}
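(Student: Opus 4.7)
The proposition is a straightforward matrix-over-a-commutative-ring style result, so my plan is to extract the matrix of $f$ with respect to the natural free generators and then argue that pointwise invertibility is forced.

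First, I would dispatch the easy direction: given $g \in LGL(n)$, pointwise multiplication $v \mapsto g \cdot v$ is clearly $\CC$-linear and commutes with multiplication by any $\lambda \in L\CC$, so it is an $L\CC$-module homomorphism, and the same construction applied to the pointwise inverse $g^{-1} \in LGL(n)$ provides its $L\CC$-module inverse.

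For the forward direction, let $e_{1},\dots,e_{n}$ denote the standard free generators of $L\CC^{n}$ as an $L\CC$-module (so $e_{i}$ is the constant loop at the $i$-th standard basis vector of $\CC^{n}$). An $L\CC$-module automorphism $f$ is determined by its values on these generators; writing
\begin{equation*}
f(e_{i}) \;=\; \sum_{j} g_{ji}\, e_{j}, \qquad g_{ji} \in L\CC,
\end{equation*}
assembles a smooth matrix-valued function $g \in C^{\infty}(S^{1}, M_{n}(\CC))$. $L\CC$-linearity of $f$ then forces, for an arbitrary $v = \sum_{i} v_{i} e_{i} \in L\CC^{n}$, the identity $f(v)(\theta) = g(\theta) v(\theta)$, so $f$ is indeed pointwise multiplication by $g$.

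It remains to upgrade $g$ to $LGL(n)$. Since $f^{-1}$ is also an $L\CC$-module automorphism, the same argument produces $h \in C^{\infty}(S^{1}, M_{n}(\CC))$ representing $f^{-1}$, and the identities $f \circ f^{-1} = \id = f^{-1} \circ f$ evaluate pointwise to $g(\theta)h(\theta) = I = h(\theta)g(\theta)$ for every $\theta \in S^{1}$. Hence $g(\theta) \in GL(n)$ for all $\theta$, so $g \in LGL(n)$ as required. The only genuine point to watch is the passage from $L\CC$-module invertibility to pointwise invertibility; I expect no real obstacle since constructing $h$ directly from $f^{-1}$ circumvents any need to invoke Cramer's rule or regularity of $\det$ separately.
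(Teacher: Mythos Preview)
Your proof is correct and follows essentially the same strategy as the paper: extract a matrix-valued function from the action on the constant basis loops, use $L\CC$-linearity to see that $f$ acts as pointwise multiplication by this matrix, and invoke $f^{-1}$ to force pointwise invertibility. The only cosmetic difference is that the paper packages the pointwise reduction via the quotient $L\CC^n/I_\theta L\CC^n \cong \CC^n$ (introducing the ideals $I_\theta$ that recur later in the paper), whereas you go straight to the matrix representation over the free generators; the content is identical.
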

\begin{proof}
Consider the $L\CC$-module $L\CC^n$. Evaluation at $\theta \in S^1$ gives the surjection $\ev_\theta \colon L\CC^n \to \CC^n$ with kernel the ideal $ I_\theta L\CC^n $ of $n$-tuples of functions $\sone\to \CC$ vanishing at $\theta$.  It follows that we have the induced isomorphism $\ev_\theta \colon L\CC^n /  I_\theta L\CC^n  \simeq \CC^n$ and if $f \colon L\CC^n \to L\CC^n$ is an $L\CC$-module isomorphism then it clearly preserves $I_\theta L\CC^n$ and induces a linear map $f(\theta) \colon \CC^n \to \CC^n$. 
By definition, for any $v \in L\CC^n$ we then have $(fv)(\theta) = f(\theta) v(\theta)$, so $f$ is completely determined by the $f(\theta)$.  
It follows that there is a map $f^{-1}$ with 
$f^{-1}(\theta) f(\theta) = 1$,  \mbox{i.e.} ~$f(\theta)$ is invertible.  
Finally if we apply $f$ to the constant map $\theta\mapsto \mathbf{e}^i$, the $i$-th standard basis vector of $\CC^n$, we obtain the $i$-th column of $f(\theta)$, which is an element of $L \CC^n$ and hence smooth.
So the assignment $\theta \mapsto f(\theta)\in GL(n)$ is smooth.  
The converse is straightforward. 
\end{proof}

So far we have characterised the bundles $\sE$ as Fr\'echet vector bundles over $M$ whose fibres are free finitely generated $L\CC$-modules.
Alternatively, they may be viewed as Fr\'echet vector bundles whose fibres are isomorphic to $L\CC^n$ and whose frame bundle  has a reduction to $LGL(n)$.  
In the case that $\sE$ is constructed from a framed vector bundle $E \to M\x\sone$, the canonical trivialisation of the fibre of $E$ over $(m,0)$ yields an extra constraint on $\sE$.
To determine this condition, let $N$ be a free finitely generated module over $L\CC$ so that there is an $L\CC$-module isomorphism $N \cong L\CC^k$, with $k$ the rank of $N$.
Let $I_\theta N$ be the kernel of the composition of such an isomorphism with $\ev_\theta$.
Motivated by  the proof of Proposition \ref{prop:aut-mod},  for $n \in N$ we say that 
$n$ {\em vanishes at $\theta \in S^1$} if $n \in I_\theta N$ and similarly that two elements $n_1$ and $n_2$ {\em agree at $\theta$} if $n_1 - n_2$ vanishes at $\theta$. 
If $ g \in \Aut(N)$ then we say that $g$ is {\em the identity at $\theta$} if for every $n \in N$ we have that $g(n) $ and $n$ agree at $\theta$. 
If  $N = L\CC^n$ as in Proposition \ref{prop:aut-mod} then such a $g$ corresponds to an element of $LGL(n)$ that is the identity at $\theta$.
 This implies the

\begin{lemma}
There is a canonical identification of the subgroup of based loops $\Omega GL(n) \subset LGL(n)$ with the subgroup $\Aut_0(L\CC^n)$ of $L\CC$-module automorphisms of $L\CC^n$ which are the identity at $0$. 
\end{lemma}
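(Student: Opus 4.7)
The plan is to invoke Proposition \ref{prop:aut-mod} to translate the statement into a pointwise condition on loops in $GL(n)$, and then observe that ``identity at $0$'' in the module sense is equivalent to the loop being based at the identity of $GL(n)$.

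More precisely, I would first recall that by Proposition \ref{prop:aut-mod}, every $L\CC$-module automorphism $f \colon L\CC^n \to L\CC^n$ is given by pointwise multiplication by a unique $\gamma_f \in LGL(n)$, that is, $(f v)(\theta) = \gamma_f(\theta) v(\theta)$ for all $v \in L\CC^n$ and $\theta \in \sone$. This yields a group isomorphism $LGL(n) \simto \Aut(L\CC^n)$, and under this isomorphism the subgroup $\Omega GL(n)$ corresponds to those $f$ with $\gamma_f(0) = \id$.

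Next I would show that the condition $\gamma_f(0) = \id$ is the same as ``$f$ is the identity at $0$'' in the sense defined just before the lemma. The ideal $I_0 L\CC^n$ is the kernel of $\ev_0 \colon L\CC^n \to \CC^n$, so $fv$ and $v$ agree at $0$ iff $\gamma_f(0) v(0) = v(0)$. If this holds for every $v \in L\CC^n$, then taking $v$ to be constant tuples running over a basis of $\CC^n$ gives $\gamma_f(0) = \id$, and conversely $\gamma_f(0) = \id$ obviously forces agreement at $0$ for every $v$. Hence $f \in \Aut_0(L\CC^n)$ iff $\gamma_f(0) = \id$ iff $\gamma_f \in \Omega GL(n)$.

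There is no real obstacle here; the statement is essentially a repackaging of Proposition \ref{prop:aut-mod}. The only point that deserves emphasis is that the notion of ``agreeing at $\theta$'' is a priori defined via an auxiliary choice of $L\CC$-module isomorphism $N \cong L\CC^k$, but in the case $N = L\CC^n$ we are using the tautological identification, so the condition really is the pointwise one and no coherence check is required.
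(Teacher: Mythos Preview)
Your proposal is correct and follows essentially the same approach as the paper: the lemma is stated immediately after the observation that, for $N = L\CC^n$, an automorphism being the identity at $\theta$ corresponds via Proposition~\ref{prop:aut-mod} to a loop in $GL(n)$ taking the value $\id$ at $\theta$, and the paper simply says ``This implies the'' lemma without further argument. Your write-up makes explicit the two directions of the equivalence $\gamma_f(0) = \id \Leftrightarrow f \in \Aut_0(L\CC^n)$, which the paper leaves implicit.
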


If $N$ is a free finitely generated module of rank $n$ and $\theta \in S^1$ we define 
$$
N_\theta := \frac{N}{I_\theta N}
$$
which is a vector space of dimension $n$.
We say that $N$ is {\em framed at $0$} if we have chosen an isomorphism between $N_0$ and $\CC^n$.  In the case of $L\CC^n$ we take the canonical such framing.  
We define $\Iso_0(L\CC^n, N)$ to be all $L\CC$-module isomorphisms that preserve the framing at $0$, then we have 

\begin{proposition}
Let $N$ be a free finitely generated projective $L\CC$-module of rank $n$ framed at $0$. 
Then $\Iso_0(L\CC^n, N)$ is a right $\Aut_0(L\CC^n)$- (or $\Omega GL(n)$)-torsor.
\end{proposition}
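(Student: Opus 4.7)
The plan is to verify the three defining properties of a right torsor: non-emptiness of $\Iso_0(L\CC^n, N)$, and a free transitive right action of $\Aut_0(L\CC^n)$.

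For \emph{non-emptiness}, first use that $N$ is free of rank $n$ to obtain any $L\CC$-module isomorphism $\phi \colon L\CC^n \to N$. Since $\phi$ preserves the ideals $I_0$ (it sends $I_0 L\CC^n$ into $I_0 N$), it induces an isomorphism $\phi_0 \colon \CC^n \to N_0$. Using the framing of $N$ at $0$ to identify $N_0 \cong \CC^n$, the map $\phi_0$ corresponds to some $A \in GL(n)$. Regarding $A^{-1}$ as a constant loop gives an element of $LGL(n)$ whose image under $\ev_0$ is $A^{-1}$, and by Proposition \ref{prop:aut-mod} this acts on $L\CC^n$ as an $L\CC$-module automorphism $\hat A \in \Aut(L\CC^n)$. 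The composition $\phi \circ \hat A$ is then an $L\CC$-module isomorphism $L\CC^n \to N$ whose induced map at $0$ is the identity, which is exactly the condition to lie in $\Iso_0(L\CC^n, N)$.

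For the \emph{right action}, given $f \in \Iso_0(L\CC^n, N)$ and $g \in \Aut_0(L\CC^n)$, the composition $f \circ g$ is an $L\CC$-module isomorphism $L\CC^n \to N$; its induced map at $0$ is the composition of the induced maps, i.e. $\id_{\CC^n} \circ \id_{\CC^n} = \id_{\CC^n}$, so $f \circ g \in \Iso_0(L\CC^n, N)$. Associativity and unit are immediate from associativity of composition.

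For \emph{freeness and transitivity}, suppose $f_1, f_2 \in \Iso_0(L\CC^n, N)$. Then $g := f_1^{-1} \circ f_2 \colon L\CC^n \to L\CC^n$ is an $L\CC$-module automorphism, and its induced map at $0$ is $\id_{\CC^n}^{-1} \circ \id_{\CC^n} = \id_{\CC^n}$, so $g \in \Aut_0(L\CC^n)$ and clearly $f_1 \circ g = f_2$; this gives transitivity and shows that any such $g$ is unique, hence freeness. Under the identification of the preceding lemma, $\Aut_0(L\CC^n) \cong \Omega GL(n)$, which gives the parenthetical version of the statement. The only substantive step is the non-emptiness argument, and even there the obstruction is mild: one simply has to correct an arbitrarily chosen isomorphism by a constant loop to enforce the framing condition at $0$.
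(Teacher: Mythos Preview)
Your proof is correct. The paper itself does not supply a proof of this proposition; it merely states the result and recalls the definition of a torsor, treating the verification as routine. Your argument fills in exactly the details one would expect: the right action by composition, the standard $g = f_1^{-1} \circ f_2$ trick for free transitivity, and the non-emptiness step via correcting an arbitrary $L\CC$-module isomorphism by a constant loop so as to match the framing at $0$. The observation that any $L\CC$-module isomorphism automatically sends $I_0 L\CC^n$ to $I_0 N$ (and hence descends to the quotients at $0$) is the only point worth making explicit, and you handle it correctly.
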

Recall that a set $X$ being a right $G$-torsor means that $G$ has a free and transitive right action on $X$.

For convenience let us call a free finitely generated $L\CC$-module of rank $n$ an $\Omega$-module of rank $n$.  We are
interested in bundles of $\Omega$-modules. If $\sE \to M$ is a fibration whose fibres are $\Omega$-modules we say 
it is locally $\Omega$-trivial if we  can cover $M$ with  open sets $U \subset M$ and choose isomorphisms  $ U \times L\CC^n \to E_{|U} $ which are fibrewise $\Omega$-module isomorphisms. We also call the latter \emph{$\Omega$ frames}.

\begin{definition}
\label{defn:ombun}
An \emph{$\Omega$ vector bundle over $M$} is a locally $\Omega$-trivial fibration $\sE \to M$ whose fibres are $\Omega$-modules.
\end{definition}
The \emph{rank} of an $\Omega$ bundle $\sE$, denoted $\rk\sE$, is the locally constant function defined as the rank of any fibre $\sE_m$ as an $L\CC$-module. A \emph{morphism} of $\Omega$ bundles is a smooth map $\sE \to \sF$ of Fr\'{e}chet vector bundles that restricts to a map of $\Omega$-modules on each fibre.

Notice that if   $\sE $ is constructed from some framed vector bundle $E \to M \times S^1$ as in \eqref{eqn:1stexample} then local $\Omega$-triviality can be seen by trivialising $E $ over $U \times S^1$.

\begin{example}
The \emph{trivial $\Omega$ bundle of rank $n$ over $M$} is simply
\[
\underline{L\CC}^n := M\x L\CC^n \lo M.
\] 
\end{example}

\begin{example}
\label{example:universalomegabundles}
For any positive integer $n$, recall (cf. \cite[Section 3.1.2]{V}) that the path fibration $\pi\colon PGL(n) \to GL(n)$ is a model for the universal $\Omega GL(n)$-bundle.
The total space $PGL(n)$ is the Fr\'{e}chet manifold of paths $p\colon \RR \to GL(n)$ such that $p(0) = \id$ with the additional property that $p^{-1}\d p$ is $2\pi$-periodic.
The projection map is simply evaluation at $2\pi$.

Via the standard representation of $\Omega GL(n)$ on $L\CC^n$, we may view $PGL(n)$ as the bundle of $\Omega$ frames for the associated bundle
\[
\sE(n) := PGL(n) \x_{\Omega GL(n)} L\CC^n \lo GL(n).
\]
Note this implies $\sE(n) $ is a classifying bundle for $\Omega$ vector bundles of rank $n$.
\end{example}

Following \cite{MV}, we call  $\sE \to M$ as constructed in \eqref{eqn:1stexample}  the {\em inverse caloron transform} of the framed bundle $E \to M \times S^1$. The reason for this is the following proposition which we leave to the interested reader.

\begin{proposition}
Let $\cF(E) \to M \times S^1$ be the $GL(n)$ frame bundle of the framed bundle $E \to M 
\times S^1$. 
Then the vector bundle associated to the inverse caloron transform of $\cF(E)$ by the natural 
representation of $\Omega GL(n)$ on $L\CC^n$ is naturally isomorphic to $\sE$. 
\end{proposition}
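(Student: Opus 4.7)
The plan is to write down an explicit natural map from the associated bundle to $\sE$ and verify it is a fibrewise $L\CC$-module isomorphism that is smooth on total spaces.

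First, I would unpack the inverse caloron transform of $\cF(E)$. The framing of $E$ over $M_0$ induces a canonical framing $s \in \Gamma(M_0, \cF(E))$, and the inverse caloron transform is then the $\Omega GL(n)$-bundle $\sF \to M$ whose fibre over $m$ is
\[
\sF_m = \{\sigma \in \Gamma(\{m\}\x\sone, \cF(E)) : \sigma(m,0) = s(m,0)\},
\]
with $\Omega GL(n)$ acting by pointwise right multiplication.

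Second, I would define a map $\Psi \colon \sF \x_{\Omega GL(n)} L\CC^n \to \sE$ by the formula
\[
\Psi\bigl([\sigma, v]\bigr)(\theta) := \sigma(\theta)\bigl(v(\theta)\bigr),
\]
where $\sigma(\theta) \colon \CC^n \to E_{(m,\theta)}$ is regarded as a linear isomorphism via the standard action of a frame on $\CC^n$. Well-definedness under the diagonal $\Omega GL(n)$-action follows from $(\sigma\gamma)(\theta)(\gamma^{-1}v)(\theta) = \sigma(\theta)v(\theta)$, and the right-hand side evidently lies in $\sE_m = \Gamma(\{m\}\x\sone, E)$.

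Third, I would check that $\Psi$ is fibrewise an $L\CC$-module isomorphism: $L\CC$-linearity is clear since $\sigma(\theta)$ is $\CC$-linear and the action of $L\CC$ commutes with it pointwise, and bijectivity on each fibre follows from the fact that a fixed $\sigma \in \sF_m$ determines a bijection $L\CC^n \to \Gamma(\{m\}\x\sone, E)$ via $v \mapsto \sigma(\cdot)v(\cdot)$.

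Fourth, for smoothness, I would pass to local trivialisations. Over any $U \subset M$ for which $E$ trivialises compatibly with the framing, the induced trivialisations of $\cF(E)$, $\sF$, $\sE$, and the associated bundle all identify the latter two with $U \x L\CC^n$, and a direct computation shows that $\Psi$ reduces to the identity map. Naturality with respect to morphisms of framed vector bundles is then formal, as each stage of the construction is functorial. The main obstacle is just being careful in the Fr\'{e}chet setting; once compatible local trivialisations are in hand, the verification is routine.
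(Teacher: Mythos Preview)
The paper does not actually prove this proposition: it is explicitly left ``to the interested reader'' immediately before the statement. Your argument is a correct and natural way to supply the omitted details---the explicit map $\Psi([\sigma,v])(\theta)=\sigma(\theta)\bigl(v(\theta)\bigr)$ is exactly what one would expect, and your checks of well-definedness, $L\CC$-linearity, bijectivity, and smoothness via compatible local trivialisations are all sound. One small addition worth making explicit: since $\sigma(m,0)=s(m,0)$ is the canonical frame, your map $\Psi$ also respects the framings at $0\in\sone$, so the isomorphism is one of $\Omega$ bundles in the full sense of Definition~\ref{defn:ombun} (with the framing built into the fibre structure).
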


Given any $\Omega$ bundle $\sE\to M$ we can 
define, as above, for any $m \in M$ and $\theta \in S^1$ an $n$-dimensional complex vector space
$$
E_{(m, \theta)} := \sE_m / I_\theta \sE_m .
$$
We claim that 
$$
E := \bigcup_{(m, \theta)  \in M \times S^1} E_{(m, \theta)} \to M \times S^1 
$$
is naturally a rank $n$ vector bundle.  The simplest way to see this is to notice that a local fibrewise $L\CC$-module isomorphism $\xi\colon U \times L\CC^n \to \sE_{|U}$ induces a vector bundle isomorphism $\check \xi \colon 
\underline \CC^n \to \cup_{(m,\theta) \in U\x\sone} E_{(m , \theta)}$. Moreover 
if $\eta $ is another  $L\CC$-module isomorphism with $\xi = \eta g $ for some $g \in \Omega GL(n)$, then $\check \xi = \check \eta g(\theta)$. 
It follows that if we choose local trivialisations of $\sE$ relative to an open cover $\{ U_\a \}_{\a \in I}$ that are pointwise $\Omega$ frames, then they induce local trivialisations of $E$. Moreover if $g_{\a\b} \colon U_\a \cap U_\b \to \Aut_0(L\CC^n) = \Omega GL(n)$ are local trivialisations of $\sE$ then 
$\check g_{\a\b} \colon (U_\a \times S^1) \cap (U_\b \times S^1) \to GL(n)$ are local trivialisations of $E$.   This discussion also shows that above the 
point $\theta = 0 $ we can use the fact that $g(0) = 1$ to show that $E$ is framed over $M_0$.  

Given an $\Omega$ bundle $\sE \to M $ we call the framed vector bundle $E \to M \times S^1$ constructed in this way the {\em caloron transform} of $\sE$.
Observe that the rank of $\sE$ is exactly the rank of its caloron transform $E$. 
 We leave it again to the interested reader to prove the following proposition.

\begin{proposition} 
Let  $\cF(\sE) \to M$ be the $\Omega GL(n)$ frame bundle of the $\Omega$ bundle $\sE \to M$. The $\CC^n$-vector
bundle associated to the caloron transform of $\cF(\sE)$ is naturally isomorphic to $E$.
\end{proposition}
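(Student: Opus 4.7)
The plan is to construct the desired isomorphism fibrewise over $M \x \sone$ and then verify smoothness via local $\Omega$ frames. First I would identify the fibre of $\cF(\sE)$ over $m \in M$ with the $\Omega GL(n)$-torsor $\Iso_0(L\CC^n, \sE_m)$ of $L\CC$-module isomorphisms preserving the framing at $0$. Writing $\sQ = \cF(\sE)$, the caloron transform $Q \to M \x \sone$ then has fibre over $(m,\theta)$ given by the $GL(n)$-torsor $(\sQ_m \x GL(n))/\Omega GL(n)$, where $\gamma \in \Omega GL(n)$ acts by $(q, g) \mapsto (q\gamma, \gamma(\theta)^{-1}g)$.

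Next I would define the map
\[
\Psi_{(m,\theta)} \colon \sQ_m \x_{\Omega GL(n)} GL(n) \lo \Iso(\CC^n, E_{(m,\theta)}), \qquad [q,g] \mapsto \bar q_\theta \circ g,
\]
where $\bar q_\theta \colon \CC^n \simto \sE_m/I_\theta \sE_m = E_{(m,\theta)}$ is the linear isomorphism induced by the $L\CC$-module map $q \colon L\CC^n \to \sE_m$ (which necessarily sends $I_\theta L\CC^n$ into $I_\theta \sE_m$). Well-definedness follows from the computation $\overline{q\gamma}_\theta = \bar q_\theta \circ \gamma(\theta)$, which is immediate upon choosing constant representatives in $L\CC^n$ and applying Proposition \ref{prop:aut-mod}. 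Clearly $\Psi_{(m,\theta)}$ is $GL(n)$-equivariant, hence an isomorphism of $GL(n)$-torsors.

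Passing to the $\CC^n$-associated bundle then yields the natural fibrewise isomorphism
\[
(Q \x_{GL(n)} \CC^n)_{(m,\theta)} \simto E_{(m,\theta)}, \qquad [\phi, v] \mapsto \phi(v).
\]
The final step, and the only one requiring any care, is to verify that this fibrewise construction defines a smooth isomorphism of framed vector bundles. The plan here is to argue locally: any $\Omega$ frame $\xi \colon U \x L\CC^n \to \sE_{|U}$ simultaneously induces a local $\Omega GL(n)$-trivialisation of $\cF(\sE)$ and, via the formula $\check\xi_{(m,\theta)}(v) := \xi_m(v) \bmod I_\theta \sE_m$, a local trivialisation of $E$ over $U \x \sone$. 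Under these, both sides of $\Psi$ reduce to $\underline{\CC}^n$, and their transition functions agree with the $\check g_{\a\b}(\theta) \in GL(n)$ used to construct $E$, so $\Psi$ becomes the identity on $\underline{\CC}^n$ and therefore glues to a global smooth isomorphism $Q \x_{GL(n)} \CC^n \simto E$. Compatibility with the framings over $M_0$ is automatic, since for $\theta = 0$ every $q \in \Iso_0(L\CC^n, \sE_m)$ satisfies $\bar q_0 = \id_{\CC^n}$ by definition. The main (mild) obstacle is purely bookkeeping: keeping track of the right $\Omega GL(n)$-action formulas in both the caloron-transform construction and the induced maps on quotients, which is where the identity $\overline{q\gamma}_\theta = \bar q_\theta \circ \gamma(\theta)$ must be invoked carefully.
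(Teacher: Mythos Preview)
The paper does not actually give a proof of this proposition---it is explicitly left to the reader. Your argument is correct and is precisely the natural one suggested by the surrounding discussion: the key observation (made just before the proposition) that local $\Omega$ frames for $\sE$ with transition functions $g_{\a\b}$ induce local trivialisations of both $\cF(\sE)$ and $E$ with matching transition functions $\check g_{\a\b}(m,\theta) = g_{\a\b}(m)(\theta)$ is exactly what you use to glue your fibrewise map $\Psi$ into a smooth global isomorphism.
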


We know from \cite[Proposition 3.2]{MV} that the caloron and inverse caloron transforms for principal bundles give an equivalence of categories and thus the same will be true for associated vector bundles.  Alternatively this may be seen directly using the following. In the case of the module $L\CC^n$ we can think of the evaluation map as the projection 
$$
\ev_\theta \colon L\CC^n \to L\CC^n / I_\theta L\CC^n  \simeq \CC^n .
$$
Similarly, for any $m \in M$,  we can define the {\em evaluation at $\theta$ map}
$$
\ev_\theta \colon \sE_m \to \sE_m / I_\theta \sE_m   = E_{(m, \theta)}
$$
which induces an isomorphism
$$
\ev \colon \sE_m \to \Gamma( \{m\} \times S^1,  E )
$$
and hence an  isomorphism 
\begin{equation}
\label{eqn:1stmoduleiden}
\overline{\ev} \colon \Gamma(M,\sE) \simto \Gamma(M\x\sone,E)
\end{equation}
of modules over $C^\infty(M\x\sone,\CC)$\footnote{the action of $f\in C^\infty(M\x\sone,\CC)$ on $s \in \Gamma(M,\sE)$ is given by $(f\cdot s)(m) := \check f(m) s(m)$ where $\check f(m) \colon \theta \mapsto f(m,\theta)$ is a loop in $\CC$.} that preserves the framing at $0\in \sone$. 
In particular we have the following

\begin{proposition} 
\label{prop:1cal}
If $\sE \to M $ is an $\Omega$ bundle, then it is naturally isomorphic to the inverse caloron transform of its caloron transform $E \to M \times S^1$.
\end{proposition}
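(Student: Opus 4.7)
The plan is to build the desired isomorphism from the fibrewise evaluation map
$\ev\colon\sE_m\to\Gamma(\{m\}\x\sone,E)$ already introduced in the paragraph preceding the statement, and then verify that it is smooth, $\Omega$-module linear in each fibre, and compatible with morphisms. Write $\sE'$ for the inverse caloron transform of $E$, so by construction $\sE'_m=\Gamma(\{m\}\x\sone,E)$. Define a map $\Psi\colon\sE\to\sE'$ covering the identity on $M$ by $\Psi(s):=\ev(s)\in\Gamma(\{m\}\x\sone,E)$ for $s\in\sE_m$. The content of the proposition is that $\Psi$ is a well-defined isomorphism of $\Omega$ bundles and that the construction is natural in $\sE$.

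First I would check fibrewise that $\Psi_m$ is an $L\CC$-module isomorphism. The action of $f\in L\CC$ on $\sE_m$ descends to multiplication by $f(\theta)$ on $\sE_m/I_\theta\sE_m=E_{(m,\theta)}$, so $\Psi_m(f\cdot s)(\theta)=f(\theta)\Psi_m(s)(\theta)$, which is exactly the $L\CC$-module structure on $\Gamma(\{m\}\x\sone,E)$. To see that $\Psi_m$ is bijective, pick an $\Omega$ frame $\xi\colon U\x L\CC^n\simto\sE_{|U}$ for some neighborhood $U\ni m$. The discussion preceding the proposition shows that this induces a trivialisation $\check\xi\colon\underline\CC^n\simto E_{|U\x\sone}$, and a direct computation gives $\Psi_m\circ\xi_m(\phi)(\theta)=\check\xi_{(m,\theta)}(\phi(\theta))$. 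Since the standard evaluation $L\CC^n\to\Gamma(\sone,\underline\CC^n)$, $\phi\mapsto(\theta\mapsto\phi(\theta))$, is an isomorphism of $L\CC$-modules, so is $\Psi_m$.

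Next I would assemble these fibrewise maps into a smooth morphism of Fr\'echet bundles. Using the same local $\Omega$ frames, $\Psi$ is locally modelled on the map $U\x L\CC^n\to U\x\Gamma(\sone,\underline\CC^n)$, $(m,\phi)\mapsto(m,\phi)$, which is smooth because the induced map on sections over any open set $V\subset U$ is the identification $(V\to L\CC^n)=C^\infty(V\x\sone,\CC^n)=\Gamma(V\x\sone,\underline\CC^n)$ — this is exactly the content of \eqref{eqn:1stmoduleiden} restricted to a trivialising chart. A smooth inverse is constructed analogously from the same local data. Thus $\Psi$ is an isomorphism of $\Omega$ bundles.

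Finally, naturality: any morphism $\varphi\colon\sE\to\sF$ of $\Omega$ bundles induces a framed morphism of caloron transforms by passing to the quotients $\sE_m/I_\theta\sE_m\to\sF_m/I_\theta\sF_m$, and naturality of the evaluation construction with respect to $L\CC$-module maps gives the commutative square expressing $\Psi_{\sF}\circ\varphi=\varphi''\circ\Psi_{\sE}$, where $\varphi''$ is the inverse caloron transform of the caloron transform of $\varphi$. The main subtlety is the smoothness verification for $\Psi$, since one must know that the identification \eqref{eqn:1stmoduleiden} is a genuine isomorphism of Fr\'echet spaces compatible with local trivialisations, rather than only an algebraic isomorphism of modules; this is where the explicit choice of $\Omega$ frames — and the fact that the caloron transform was defined from such frames in the first place — does the real work.
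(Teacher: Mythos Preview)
Your proposal is correct and follows essentially the same approach as the paper: the natural isomorphism is the fibrewise evaluation map $\ev\colon\sE_m\to\Gamma(\{m\}\times\sone,E)$ introduced in the paragraph preceding the statement, and the paper leaves the verifications of $L\CC$-linearity, smoothness, and naturality implicit (or alternatively defers to the principal-bundle version of the caloron correspondence from \cite{MV}). You have simply filled in those details carefully, which is fine.
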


Conversely given a framed vector bundle $E \to M\x\sone$ and writing $\sE \to M$ for its inverse caloron transform, there is an obvious isomorphism
\begin{equation}
\label{eqn:2ndmoduleiden}
\underline{\ev} \colon \Gamma(M\x\sone,E) \simto \Gamma(M,\sE)
\end{equation}
 of $C^\infty(M\x\sone,\CC)$-modules that preserves the framing at $0\in \sone$.
\begin{proposition}
\label{prop:2cal}
If $E \to M\x\sone$ is a framed vector bundle, then it is naturally isomorphic to the caloron transform of its  inverse caloron transform $\sE\to M$.
\end{proposition}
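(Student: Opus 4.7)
The plan is to exhibit an explicit natural map $\Psi \colon E' \to E$, where $E' \to M\x\sone$ denotes the caloron transform of the inverse caloron transform $\sE \to M$, verify fibrewise that $\Psi$ is a bijection, then promote it to a smooth framed vector bundle isomorphism by exhibiting compatible local trivialisations. First, I would define the candidate map fibrewise. By definition $E'_{(m,\theta)} = \sE_m / I_\theta \sE_m$ and $\sE_m = \Gamma(\{m\}\x\sone, E)$, so evaluation $\ev_\theta^E \colon s \mapsto s(m,\theta)$ is a $\CC$-linear map $\sE_m \to E_{(m,\theta)}$ whose kernel is precisely the set of sections vanishing at $\theta$, namely $I_\theta \sE_m$. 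Hence it descends to a linear injection
\[
\Psi_{(m,\theta)} \colon E'_{(m,\theta)} \lo E_{(m,\theta)}.
\]

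For surjectivity, given $v \in E_{(m,\theta)}$ I would use a local trivialisation of $E$ near $(m,\theta)$ together with a bump function in the $\sone$-direction to produce a smooth global section of $E|_{\{m\}\x\sone}$ taking value $v$ at $(m,\theta)$; such extensions exist because $\{m\}\x\sone$ is compact and $E|_{\{m\}\x\sone}$ has finite rank. To promote $\{\Psi_{(m,\theta)}\}$ to a smooth vector bundle isomorphism, I would use the very recipe by which the caloron transform is built: given a framed local trivialisation $\phi \colon U \x\sone \x \CC^n \simto E|_{U\x\sone}$, define $\check\phi \colon U \x L\CC^n \to \sE|_U$ by $\check\phi(m, \gamma)(\theta) := \phi(m, \theta, \gamma(\theta))$. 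This is exactly an $\Omega$ frame of $\sE$, and the caloron transform construction applied to $\check\phi$ yields a local trivialisation of $E'$ which, when composed with $\Psi$, coincides with $\phi$ on the nose (a class $[\gamma] \in L\CC^n / I_\theta L\CC^n \simeq \CC^n$ is sent first to $\gamma(\theta)$ and then to $\phi(m,\theta,\gamma(\theta))$). In matched local trivialisations $\Psi$ reads as the identity on $\CC^n$, so it is smooth and fibrewise invertible.

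For framings, the canonical framing of $L\CC^n$ at $0 \in \sone$ pushes forward along $\check\phi$ to the given framing of $E$ over $M_0$ and simultaneously to the framing of $E'$ over $M_0$ arising from the caloron transform, so $\Psi$ is a morphism of framed bundles. Naturality in $E$ is immediate, as every step of the construction is functorial; equivalently, $\Psi$ is the bundle-level incarnation of the section-level isomorphism $\underline\ev$ of \eqref{eqn:2ndmoduleiden}. The main obstacle is purely bookkeeping: verifying that the identifications $\sE_m = \Gamma(\{m\}\x\sone, E)$, $E'_{(m,\theta)} = \sE_m / I_\theta \sE_m$, and the framings at $0$ all match up coherently under local trivialisations. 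Once a single pair of compatible trivialisations is produced, smoothness of $\Psi$ as well as of $\Psi^{-1}$ follows for free.
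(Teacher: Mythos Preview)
Your proposal is correct and follows exactly the approach the paper sets up: the paper does not spell out a proof of this proposition, but the preceding discussion introduces the evaluation maps $\ev_\theta$ and the module isomorphism $\underline{\ev}$ of \eqref{eqn:2ndmoduleiden} precisely so that the reader can carry out the argument you give. Your fibrewise definition of $\Psi$ via $\ev_\theta$, the check that it reads as the identity in matched local trivialisations, and the verification of framings and naturality are all sound and constitute the intended ``direct'' proof alluded to in the text (the alternative being an appeal to the principal-bundle version from \cite{MV}).
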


Notice that the caloron transform and the inverse caloron transform only invert each other up to natural isomorphism---they are \emph{pseudo-inverses}---and are best viewed as giving an equivalence of categories.
This is the point of view of \cite[Section 3.1.1]{Sch}.

\subsection{Higgs fields}
By definition, the fibres of any $\Omega$ bundle of rank $n$ are $L\CC$-modules arising as the space of sections $\Gamma(S^1,F)$ for some $\CC^n$-bundle $F \to S^1$. 
As such we can consider connections on $F$;
 usually they give rise to a covariant derivative 
$\nabla \colon \Gamma(S^1,F) \to \Gamma(S^1, T^*S^1 \otimes F)$
but  using the canonical trivialisation of  $T^*\sone$ we may regard the connection as a map $\nabla \colon \Gamma(S^1,F) \to \Gamma(S^1,  F)$.
Thus a connection on $F$ is equivalent to an $L\CC$-derivation $\delta \colon \Gamma(S^1,F) \to \Gamma(S^1,F)$.

We define an $L\CC$-derivation of the $L\CC$-module $N$ is a linear  map $\delta \colon N \to N$ satisfying 
$\delta(s n) = (\partial_\theta s) n + s \delta (n)$ for all $s \in L\CC$ and $n\in N$, with $\partial_\theta$ partial differentiation with respect to $\theta$.

\begin{example}
Every derivation on $L \CC^n$ is of the form $ \partial_\theta + a $ for $a \in L\mathfrak{gl}(n)$.
\end{example}

If $N$ is an $L\CC$-module, denote the collection 
of all such derivations by $\Der(N)$ and note that an affine combination of 
derivations is again a derivation.  If $\sE \to M$ is an $\Omega$ bundle we can apply $\Der$ fibrewise to 
obtain a bundle $\Der(\sE) \to M$. Then we have 

\begin{definition} A {\em Higgs field} $\phi$ for an $\Omega$ vector bundle $\sE \to M$ is a smooth section 
of $\Der(\sE) \to M$. 
\end{definition}

\begin{example}
The \emph{trivial Higgs field on $\underline{L\CC}^n$} is simply the fibrewise $L\CC$-module derivation $\d \colon v\mapsto \d_\theta v$ that differentiates along the circle.
\end{example}

The existence of Higgs fields is guaranteed by a standard partition of unity argument. Alternatively, we may proceed as follows.
A connection on a framed vector bundle  $E \to M \times S^1$ is \emph{framed} if it restricts to the trivial connection on $E_ {|M_0}$.  
Let $\sE$ be the inverse caloron transform of the framed vector bundle $E\to M\x\sone$ so that $\sE_m$ is the space of sections of $E$ over $\{m \} \times S^1$.

\begin{proposition} 
The restriction of a framed connection on the framed bundle $E \to M \times S^1$ to the circle direction 
defines a Higgs field on its inverse caloron transform $\sE$. 
Moreover, since any $\Omega$ bundle is isomorphic to the inverse caloron transform of some framed bundle over $M\x\sone$, Higgs fields exist.
\end{proposition}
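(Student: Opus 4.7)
My plan is to build the Higgs field pointwise using covariant differentiation in the circle direction, and then to verify separately that it is a fibrewise derivation and that it is smooth. For the existence assertion, the key point is to exhibit a framed connection on any framed bundle, for which a partition-of-unity argument suffices.

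Explicitly, let $\d_\theta$ denote the canonical vector field on $\sone$, pulled back to $M\x\sone$. For each $m\in M$ and $s\in \sE_m = \Gamma(\{m\}\x\sone, E)$ I would set
\[
\phi_m(s)(\theta) := (\nabla_{\d_\theta} s)(m,\theta).
\]
Because $\d_\theta$ is tangent to $\{m\}\x\sone$, this value depends only on the restriction $s$, so $\phi_m\colon \sE_m\to\sE_m$ is well-defined intrinsically. The Leibniz rule for $\nabla$ immediately gives $\phi_m(fs) = (\d_\theta f)\,s + f\,\phi_m(s)$ for $f\in L\CC$, so $\phi_m\in\Der(\sE_m)$. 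Smoothness of $\phi$ as a section of $\Der(\sE)\to M$ is a local statement: any trivialisation of $E$ over $U\x\sone$ induces an $\Omega$ trivialisation of $\sE$ over $U$, and in this trivialisation $\phi$ is given by the $L\mathfrak{gl}(n)$-valued map on $U$ obtained by contracting the connection $1$-form of $\nabla$ with $\d_\theta$, which is manifestly smooth in $m$.

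For the existence statement---the only place where any real work is required---I would proceed as follows. By Proposition \ref{prop:1cal}, an arbitrary $\Omega$ bundle $\sE$ is isomorphic to the inverse caloron transform of its caloron transform $E\to M\x\sone$, so it suffices to produce a framed connection on $E$. The framing of $E$ along $M_0$ extends to a local trivialisation of $E$ over some open neighbourhood $U\supset M_0$, yielding a trivial connection $\nabla^0$ there. Choosing any global connection $\nabla^1$ on $E$ and a cut-off function $\rho\colon M\x\sone \to [0,1]$ with $\rho\equiv 1$ on some smaller open neighbourhood of $M_0$ and with support contained in $U$, the convex combination $\rho\nabla^0 + (1-\rho)\nabla^1$ is a global connection on $E$ that agrees with $\nabla^0$ on a neighbourhood of $M_0$ and hence restricts to the trivial connection on $E_{|M_0}$. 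The only mild subtlety is to arrange $\rho$ to be identically $1$ on an open neighbourhood of $M_0$, not only on $M_0$ itself, so that the resulting connection really is framed in the sense of restricting to the trivial connection along $M_0$. Applying the pointwise construction above to this framed connection then produces the desired Higgs field on $\sE$.
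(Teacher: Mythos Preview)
Your proposal is correct and matches the paper's approach: the paper does not give a formal proof but simply records the explicit formula $\phi(v) = \underline\ev\,(\nabla_{\d_\theta}(\overline{\ev}(s)))(m)$ immediately after the proposition, which is exactly your pointwise construction, and invokes the caloron correspondence together with the standard partition-of-unity argument for existence. Two small remarks: your ``mild subtlety'' about needing $\rho\equiv 1$ on an open neighbourhood of $M_0$ is unnecessary, since framedness only requires the restriction to $E_{|M_0}$ to be trivial and $\rho|_{M_0}=1$ already ensures this; and the paper in fact observes that the framing hypothesis on $\nabla$ is not needed for this particular result (any connection gives a Higgs field), the condition only becoming relevant for module connections.
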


Explicitly, let $\d_\theta$ denote the canonical vector field on $E$ in the circle direction.
Let $\nabla$ be a framed connection on the framed bundle $E$.
If $\sE$ is the caloron transform of $E$, then for any $v \in \sE_m$ we choose a section $s \in \Gamma(\sE)$ extending $v$.
The Higgs field determined by $\nabla$ is then
\begin{equation}
\label{eqn:definingHF}
\phi(v) = \underline\ev\, \Big(\!\nabla_{\d_\theta}\big(\overline{\ev}(s) \big)\!\Big)(m)
\end{equation}
where $\overline{\ev}$, $\underline{\ev}$ are from \eqref{eqn:1stmoduleiden} and \eqref{eqn:2ndmoduleiden} and we have omitted reference to the natural isomorphisms coming from the caloron correspondence.
In point of fact, we do not require the connection $\nabla$ to be framed in the above result---this condition is required later in the discussion on module connections.

We remark that there is an obvious pullback operation on Higgs fields. Furthermore, by expressing the connection acting on a local basis in terms of its connection 1-forms in the standard fashion, we have
\begin{lemma}
Higgs fields on the $\Omega$ bundle $\sE$ are in bijective correspondence with Higgs fields on its $\Omega GL(n)$ frame bundle $\cF(\sE)$.
\end{lemma}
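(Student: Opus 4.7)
The plan is to set up the standard principal-versus-associated dictionary, adapted to the category of $\Omega$-modules, and to show that the two notions of Higgs field are literally two encodings of the same data. The key observation driving the whole argument is the example already recorded above: a derivation of $L\CC^n$ is uniquely of the form $\d_\theta + a$ for some $a \in L\mathfrak{gl}(n)$. Hence $\Der(L\CC^n)$ is canonically an affine space modelled on $L\mathfrak{gl}(n)$ with basepoint $\d_\theta$, and any $\Omega$ frame $f \in \cF(\sE)_m$---that is, any $L\CC$-module isomorphism $L\CC^n \simto \sE_m$---induces an isomorphism $\Der(\sE_m) \simto \Der(L\CC^n)$ via conjugation $\delta \mapsto f^{-1}\delta f$. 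A second ingredient is the Leibniz identity
\[
\d_\theta \circ \c \;=\; \c \circ \d_\theta + (\d\c)
\]
of operators on $L\CC^n$, where $\c \in \Omega GL(n)$ acts by pointwise multiplication and $(\d\c)$ denotes multiplication by the matrix-valued function $\d_\theta \c$.

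Given a Higgs field $\Phi\colon \cF(\sE) \to L\mathfrak{gl}(n)$ in the sense of \eqref{eqn:twistedeq}, I would define a section $\phi$ of $\Der(\sE) \to M$ by the formula
\[
\phi_m \;:=\; f \circ (\d_\theta + \Phi(f)) \circ f^{-1},
\]
for any $f \in \cF(\sE)_m$. The main check is frame-independence: replacing $f$ by $f\c$ and conjugating by $\c$ produces, via the Leibniz identity, a spurious term $-(\d\c)\c^{-1}$ coming from $\c\d_\theta\c^{-1}$; this is cancelled exactly by the inhomogeneous contribution $+(\d\c)\c^{-1}$ arising from the term $\c^{-1}\d\c$ in \eqref{eqn:twistedeq}, while the homogeneous part $\ad(\c^{-1})\Phi(f)$ is simply conjugated back to $\Phi(f)$. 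In other words, the precise form of the twisted equivariance \eqref{eqn:twistedeq} is engineered to make $\phi_m$ well-defined.

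Conversely, given $\phi \in \Gamma(M, \Der(\sE))$, the derivation $f^{-1}\phi_m f$ on $L\CC^n$ differs from $\d_\theta$ by a unique element of $L\mathfrak{gl}(n)$, which I would take as the definition of $\Phi(f)$. Running the same Leibniz calculation in reverse verifies that $\Phi$ satisfies \eqref{eqn:twistedeq}. Smoothness in both directions is routine: choosing a local smooth section $\sigma\colon U \to \cF(\sE)$ identifies $\cF(\sE)|_U$ with $U \times \Omega GL(n)$ and reduces smoothness of $\phi|_U$ to smoothness of $\Phi \circ \sigma \colon U \to L\mathfrak{gl}(n)$, and conversely. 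The two assignments $\Phi \leftrightarrow \phi$ are manifestly inverse to each other at the level of fibres.

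I do not expect a genuine obstacle here: the lemma is essentially definitional once the model calculation for $\Der(L\CC^n)$ is in hand. The only subtle point, and the one that dictates the precise form of \eqref{eqn:twistedeq}, is the failure of $\d_\theta$ to commute with the pointwise gauge action of $\c \in \Omega GL(n)$.
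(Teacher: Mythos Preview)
Your proposal is correct and is exactly the argument the paper has in mind: the paper does not write out a proof but only remarks that the lemma follows ``by expressing the connection acting on a local basis in terms of its connection 1-forms in the standard fashion,'' which is precisely your frame-dependent formula $\phi_m = f\circ(\d_\theta + \Phi(f))\circ f^{-1}$ together with the Leibniz cancellation verifying well-definedness. You have simply made explicit what the paper leaves as a one-line hint.
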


\begin{example}
\label{example:universalHF}
Recalling Example \ref{example:universalomegabundles}, we note that the map $\Phi \colon p\mapsto p^{-1}\d p$ satisfies \eqref{eqn:twistedeq} so is a Higgs field on $PGL(n)$.
Applying the above Lemma gives a Higgs field $\phi(n)$ on the associated $\Omega$ bundle  $\sE(n) \to GL(n)$. 
\end{example}

An important notion is that of \emph{holonomy} of a Higgs field.
Recall from \cite[Proposition 4.3]{MV} that if $\Phi$ is a Higgs field on the principal $\Omega G$-bundle $\sQ \to M$, then the \emph{holonomy} of $\Phi$ is given by solving
\[
\Phi(q) = g^{-1} \d g
\]
for the path $g = g(q)$ subject to the initial condition $g(0) = 1$.
This gives a smooth $\Omega G$-equivariant map $\hol_\Phi \colon \sQ \to PG$ that descends to a classifying map $\hol_\Phi \colon M \to G$ for the bundle $\sQ$.

 In the case of the $\Omega$ bundle $\sE \to M$ equipped with Higgs field $\phi$, we define the \emph{holonomy} of $\phi$ as follows.
Up to natural isomorphism, $\sE$ may be considered as the inverse caloron transform of some framed bundle $E \to M\x\sone$, so that $\sE_m = \Gamma(\{m\}\x\sone,E)$, and we set $n=\rk\sE$.
Moreover, by Proposition \ref{prop:geometriccal} below, under this isomorphism the Higgs field is given fibrewise by the expression \eqref{eqn:definingHF} for some framed connection $\nabla$ on $E$.
The holonomy of $\phi$ is then the map
\[
\hol_\phi \colon M \lo GL(n)
\]
that sends $m$ to the holonomy of $\nabla$ around the loop $\theta\mapsto (m,\theta)$ starting at $(m,0)$.
It is not too hard to see that if $\Phi$ is the corresponding Higgs field on the $\Omega GL(n)$ frame bundle $\cF(\sE)$, then $\hol_\phi = \hol_\Phi$.
It follows from this that the pullback of $\sE(n)$ by $\hol_\phi$ is isomorphic to $\sE$ so that $\hol_\phi$ is a classifying map for $\sE$.

\subsection{Module connections}
In addition to Higgs fields, there is another important notion of connective data on $\Omega$ bundles.
First consider an ordinary connection $\triangle \colon \Gamma(M,\sE) \to \Omega^1(M, \sE)$, thought of as a vector bundle covariant derivative. 
The fibre
$T_m^*M \otimes \sE_m$ may be viewed as an $L\CC$-module in an obvious way, so it is natural to ask if the connection is a derivation for the $L\CC$-module structure; if it is then it must also preserve sections that vanish at $0\in\sone$.
That is, such a connection preserves sections of the form $s \colon m \mapsto s(m) \in I_0 E_m$ so it induces an action on sections of the bundle $\sE_0$ whose fibre over $m$ is $\sE_m/I_0\sE_m$. 

\begin{definition}
A connection $\triangle \colon \Gamma(M,\sE) \to \Omega^1(M, \sE)$ is a  {\em module connection}
if it is a derivation for the  $L\CC$-module structure on the fibres and if it induces the trivial connection $\Gamma(M, \sE_0) \to \Omega^1(M, \sE_0)$.
\end{definition}

\begin{example}
The \emph{trivial module connection on $\underline{L\CC}^n$} is simply the ordinary trivial connection  $\de \colon v \mapsto d  v$ considered as an $L\CC$-module derivation.
\end{example}

Module connections always exists on $\Omega$ bundles.
This may be seen directly by recalling that every $\Omega$ bundle $\sE \to M$ is naturally isomorphic to the inverse caloron transform of some framed bundle $E \to M\x\sone$.
Picking a framed connection $\nabla$ on $E$, by using the canonical trivialisation of $T^\ast\sone$ the expression
\[
\triangle := \underline{\ev} \circ \nabla \circ \overline{\ev}
\]
may be viewed as a map $\Gamma(M,\sE) \to \Omega^1(M,\sE)$, once again omitting the natural isomorphism arising from the caloron correspondence.
Since $\nabla$ is a connection, it follows that $\triangle$ is a derivation for the $L\CC$-module structure on the fibres of $\sE$.
Moreover, the framing condition on $\nabla$ implies that the induced connection $\Gamma(M,\sE_0) \to \Omega^1(M,\sE_0)$ is trivial so that $\triangle$ is a module connection---we call this the module connection determined by $\nabla$ via the inverse caloron transform.

It is not difficult to check that the pullback of a module connection is once again a module connection.

\begin{lemma}
Module connections on the $\Omega$ bundle $\sE$ are in bijective correspondence with $\Omega GL(n)$-connections on its associated $\Omega GL(n)$ frame bundle
$\cF(\sE)$. 
\end{lemma}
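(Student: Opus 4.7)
The plan is to reduce to the classical bijection between principal connections on a $G$-bundle and covariant derivatives on its associated vector bundles, verifying that the two extra clauses in the definition of a module connection---being a fibrewise $L\CC$-derivation and inducing the trivial connection on $\sE_0$---correspond exactly to the structure group being $\Omega GL(n)$ rather than $LGL(n)$. This parallels the analogous Higgs-field correspondence established earlier in the section.

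For the forward direction, take an $\Omega GL(n)$-connection 1-form $\omega \in \Omega^1(\cF(\sE),\Omega\mathfrak{gl}(n))$ and let $\triangle$ be the covariant derivative it induces on $\sE = \cF(\sE) \x_{\Omega GL(n)} L\CC^n$ via the horizontal projection. By Proposition \ref{prop:aut-mod}, $\Omega GL(n)$ acts on $L\CC^n$ through $L\CC$-module automorphisms; since this commutes with fibrewise $L\CC$-multiplication, $\triangle$ inherits the $L\CC$-derivation property. Moreover $\Omega GL(n)$ acts trivially on $L\CC^n/I_0 L\CC^n \cong \CC^n$ (based loops are the identity at $0$), so $\sE_0$ is canonically the trivial bundle $M\x\CC^n$ and the induced covariant derivative annihilates the globally constant sections coming from the standard basis of $\CC^n$, and hence is the trivial connection.

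For the backward direction, given a module connection $\triangle$ and a local $\Omega$ frame $\xi\colon U \x L\CC^n \to \sE_{|U}$, set $e_i(m) = \xi(m,\mathbf{e}^i)$ for each standard constant-loop basis vector $\mathbf{e}^i$. Since $e_1,\ldots,e_n$ is an $L\CC$-basis of $\sE$ at each point of $U$, the $L\CC$-derivation property lets us write
$$
\triangle e_i = \sum_j A^j_i \otimes e_j, \qquad A^j_i \in \Omega^1(U,L\CC),
$$
assembling into a local connection matrix $A \in \Omega^1(U,L\mathfrak{gl}(n))$. The trivial-induced-connection condition forces $A_{|\theta=0} = 0$, because at $\theta = 0$ the $e_i$ project to the standard basis of $\sE_0 = M\x\CC^n$ and must be annihilated, so in fact $A \in \Omega^1(U,\Omega\mathfrak{gl}(n))$. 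Under an $\Omega GL(n)$-valued change of $\Omega$ frame $g\colon U\cap U'\to \Omega GL(n)$ the standard calculation yields $A' = g^{-1}Ag + g^{-1}dg$, so the local matrices glue to an $\Omega GL(n)$-connection on $\cF(\sE)$. Mutual inversion of the two constructions is then checked locally in these frames.

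The step requiring the most care will be the framing clause, where one must confirm that ``trivial induced connection on $\sE_0$'' is equivalent to---rather than merely necessary or sufficient for---the local $L\mathfrak{gl}(n)$-valued 1-form actually taking values in $\Omega\mathfrak{gl}(n)$. An alternative, perhaps cleaner, route would be to apply the caloron correspondence directly: module connections on $\sE$ correspond to framed connections on the caloron transform $E \to M\x\sone$, and $\Omega GL(n)$-connections on $\cF(\sE)$ correspond to framed $GL(n)$-connections on $\cF(E)$, after which the classical principal/associated-bundle correspondence on $M\x\sone$ completes the argument.
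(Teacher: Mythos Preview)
The paper does not actually give a proof of this lemma; it is stated without proof, in the same spirit as the analogous Higgs field lemma, for which the only justification offered is the one-line remark ``by expressing the connection acting on a local basis in terms of its connection 1-forms in the standard fashion.'' Your proposal is correct and is precisely a careful execution of that standard argument: pass to local $\Omega$ frames, extract the connection matrix, and verify that the two extra clauses in the definition of a module connection are equivalent to the matrix taking values in $\Omega\mathfrak{gl}(n)$ rather than $L\mathfrak{gl}(n)$.

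A few small comments. In the backward direction you correctly identify the key point, namely that the framing clause (trivial induced connection on $\sE_0$) is \emph{equivalent} to $A_{|\theta=0}=0$; your parenthetical worry is unfounded, since in an $\Omega$ frame the sections $e_i$ project to the global constant sections of $\sE_0=M\times\CC^n$, and the trivial connection is characterised by annihilating exactly these, so the equivalence is immediate. Your alternative caloron-based route also works and is consistent with the surrounding narrative of the paper, though it presupposes the correspondence between module connections and framed connections on the caloron transform, which the paper establishes only informally at this stage. The direct local-frame argument you give first is therefore the more self-contained option.
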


\begin{example}
\label{example:universalmodconn}
There is a well-known class of connections on the path fibration $PGL(n) \to GL(n)$, given as follows (see also \cite[Example 3.5]{MV}).
Let $\Theta$ denote the (left-invariant) Maurer-Cartan form for the pointwise multiplication on $PGL(n)$ and let $\widehat\Theta$ denote the \emph{right-invariant} Maurer-Cartan form on $GL(n)$.
Choose a smooth function $\alpha \colon \RR\to\RR$ such that $\alpha(t) = 0$ for $t\leq0$ and $\alpha(t) =1$ for $t \geq 1$, then the expression 
\[
\sA_\alpha = \Theta - \alpha \ad(p^{-1}) \pi^\ast\widehat\Theta
\]
defines an $\Omega GL(n)$-connection at $p \in PGL(n)$.
By the above lemma, $\sA_\a$ determines a module connection $\t_\alpha(n)$ on the $\Omega$ bundle  $\sE(n) \to GL(n)$.  
Of course, this module connection depends on the choice of smooth function $\alpha$.
\end{example}

When we are given a framed vector bundle equipped with framed connection, we have already seen how to define a module connection and a Higgs field on its inverse caloron transform.
Conversely, if $\sE \to M$ is a given $\Omega$ bundle equipped with module connection $\t$ and Higgs field $\phi$, we obtain a framed connection on the caloron transform $E \to M\x\sone$ as follows.
First observe that, via a natural isomorphism arising from the caloron correspondence, we may identify the expressions
\[
\overline{\ev} \circ \triangle\circ \underline{\ev}
\;\;
\mbox{ and }
\;\;
\overline{\ev} \circ \phi\circ \underline{\ev}
\]
respectively with the $M$ and $\sone$ components of a connection on $E$.
Adding these together thus gives a connection on $E$, which can be shown to be framed---this is the framed connection determined by $\t$ and $\phi$.

\begin{proposition}
\label{prop:geometriccal}
The natural isomorphisms of Propositions \ref{prop:1cal} and \ref{prop:2cal} preserve the connective data.
\end{proposition}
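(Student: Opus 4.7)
The plan is to verify the claim by unwinding the explicit formulas that define connective data on either side of the caloron correspondence, and observing that the natural isomorphisms in Propositions \ref{prop:1cal} and \ref{prop:2cal} are induced fibrewise by the mutually inverse evaluation maps $\overline{\ev}$ and $\underline{\ev}$ of \eqref{eqn:1stmoduleiden} and \eqref{eqn:2ndmoduleiden}. Once this identification is granted the whole statement is essentially tautological.

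Concretely, starting from $(\sE,\triangle,\phi)$, form the caloron transform $E \to M\x\sone$ together with its framed connection $\nabla$. By construction, using the splitting $T(M\x\sone) = TM \oplus T\sone$ and the canonical trivialisation of $T\sone$, the $TM$- and $T\sone$-components of $\nabla$ are $\overline{\ev}\circ\triangle\circ\underline{\ev}$ and $\overline{\ev}\circ\phi\circ\underline{\ev}$ respectively. Applying the inverse caloron transform to $(E,\nabla)$ now produces, via the definition $\triangle = \underline{\ev}\circ\nabla\circ\overline{\ev}$ and via \eqref{eqn:definingHF}, the pair
\[
\bigl(\,\underline{\ev}\circ(\overline{\ev}\circ\triangle\circ\underline{\ev})\circ\overline{\ev},\;\;\underline{\ev}\circ(\overline{\ev}\circ\phi\circ\underline{\ev})\circ\overline{\ev}\,\bigr),
\]
which collapses to $(\triangle,\phi)$ because $\overline{\ev}$ and $\underline{\ev}$ are mutually inverse. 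The same argument, read backwards, handles Proposition \ref{prop:2cal}: applying the inverse caloron transform to $(E,\nabla)$ yields module connection and Higgs field whose reassembly via \eqref{eqn:finiteconnconstruct} produces a framed connection with $TM$- and $T\sone$-components agreeing with those of $\nabla$ after the same cancellation.

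A conceptually cleaner alternative would be to pass to frame bundles: the two lemmas identifying module connections and Higgs fields on an $\Omega$ bundle with $\Omega GL(n)$-connections and Higgs fields on its frame bundle $\cF(\sE)$ reduce the statement to the analogous claim at the level of principal bundles, which follows from \cite[Proposition 4.3]{MV} together with the explicit formula \eqref{eqn:finiteconnconstruct}. Either way, the main obstacle is purely bookkeeping around the naturality statements---specifically, confirming that the evaluation isomorphism $\sE_m \simto \Gamma(\{m\}\x\sone,E)$ used in the proof of Proposition \ref{prop:1cal} is the same evaluation map implicit in the definitions of the module connection and Higgs field attached to a framed connection, so that connective data is indeed transported along the natural isomorphisms as claimed.
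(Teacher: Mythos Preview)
Your proposal is correct and is essentially the approach implicit in the paper. In fact the paper gives no proof of this proposition at all: it is stated and immediately followed by a remark pointing to \cite[Section 3.2.1]{Sch} for the categorical formulation. Your unwinding via the evaluation maps $\overline{\ev}$ and $\underline{\ev}$ is exactly the bookkeeping the paper is suppressing with its phrase ``omitting the natural isomorphism arising from the caloron correspondence'', and your frame-bundle alternative is likewise in the spirit of the paper's repeated reductions to \cite{MV}. The only care needed, which you flag yourself, is that the outer pair $\underline{\ev},\overline{\ev}$ in your displayed composite belongs to the inverse caloron transform $\sE'$ of $E$ while the inner pair belongs to the original $\sE$; they become mutual inverses precisely after transporting along the natural isomorphism $\sE\cong\sE'$, which on sections is $\underline{\ev}\circ\overline{\ev}$ itself.
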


\begin{remark}
As with the topological caloron correspondence above, the caloron correspondence for bundles with connective data may be phrased as an equivalence of categories.
This formulation, developed in \cite[Section 3.2.1]{Sch}, shows that any map of $\Omega$ bundles $\sE \to \sF$ preserving the module connection and Higgs field gives rise to a map $E \to F$ of the caloron transforms that preserves the associated framed connections; the converse also holds.
\end{remark}

\subsection{Operations}

There are two operations of interest on $\Omega$ bundles.  The first is the Whitney sum of vector bundles. Notice that if $F_1, F_2$ 
are vector bundles over $S^1$ then there is a natural isomorphism
$$
\Gamma(S^1, F_1) \oplus \Gamma(S^1, F_2) \cong \Gamma(S^1, F_1 \oplus F_2)
$$
of $L\CC$-modules.  It follows that the inverse caloron transform of $E_1 \oplus E_2$ is $\sE_1 \oplus \sE_2$ and vice versa.

Similarly applying the tensor product of $L\CC$-modules we have
$$
\Gamma(S^1, F_1) \otimes_{L\CC} \Gamma(S^1, F_2) \cong \Gamma(S^1, F_1 \otimes F_2).
$$
So for $L\CC$-modules $N, N'$ we write $N\oast N' := N\otimes_{L\CC} N'$ and we define the {\em honed tensor product} of $\Omega$ bundles $\sE_1$ and $\sE_2$, to distinguish it from the vector space tensor product, by
$$
(\sE_1 \oast \sE_2)_m := (\sE_1)_m \otimes_{L\CC} (\sE_2)_m.
$$
It follows that the inverse caloron transform of $E_1 \otimes E_2$ is $\sE_1 \oast \sE_2$ and vice versa.

Finally we note that if $F \to S^1$ is a vector bundle then 
$$
\Hom_{L\CC}( \Gamma(S^1, F), L\CC) \cong \Gamma(S^1, F^*)
$$
and we define for any $L\CC$-module $N$ the dual module $N^\ast := \Hom_{L\CC}(N, L\CC)$. Similarly for any $\Omega$ bundle $\sE\to M$ we define the $\Omega$ bundle $\sE^\ast\to M$ fibrewise. If $\sE$ is the inverse caloron transform of the framed bundle $E$, then $\sE^\ast$ is the inverse caloron transform of the dual $E^*$ equipped with the dual framing, and vice versa.  Likewise if $N$ is an $L\CC$-module we define $\overline N$ to be the conjugate module where the $L\CC$-action is anti-linear,  \emph{i.e.} ~$sn  = \overline s n$ for $s \in L\CC$ and $n \in N$.  Again we can show that the inverse caloron transform of $\overline E$ is $\overline \sE$ and vice-versa.

\subsection{Hermitian structures}

For the discussion in Section \ref{TWZ} also need to consider $\Omega$ bundles with structure group $\Omega U(n)$. 
Let $F \to S^1$ be a Hermitian vector bundle, with Hermitian structure $\langle\ ,\ \rangle$. For $f, g \in \Gamma(S^1, F)$
we then have $\left(z \mapsto \langle f(z), g(z) \rangle \right)\in L\CC$.  Motivated by this we have the following

\begin{definition}
Let $N$ be a free finitely generated $L\CC$-module. A \emph{Hermitian structure} on $N$  is an $L\CC$-module map 
$$
\llan\ ,\ \rran \colon N \oast \overline N \to L\CC.
$$
such that
\begin{enumerate}
\item
$\llan v, {v} \rran (\theta) > 0$ whenever $v\notin I_\theta N$,
\item
$\llan v,{w} \rran(\theta) = \overline{\llan w,{v} \rran}(\theta)$ for all $v \in N$, $w \in\overline N$ and $\theta \in \sone$.
\end{enumerate}
\end{definition}

Notice that if $v\in I_\theta N$ it has the form $v = s w$ for some $s \in L\CC$ vanishing at $\theta$, so that since $\llan v, x \rran (\theta) =  s(\theta) 
\llan w, x \rran (\theta ) = 0$ for any $x\in N$ there is an induced Hermitian inner product on the finite dimensional vector space $N_\theta = N/I_\theta N$. 
If $N$ is a free finitely generated module framed at zero, then  the Hermitian structure is \emph{framed at zero} if $(v, w) \mapsto \llan v, w \rran  (0) $ induces the standard inner product on $\CC^n$. 

\begin{definition}
A \emph{Hermitian $\Omega$  bundle} is an $\Omega$ vector bundle with a smooth fibrewise choice of a Hermitian structure which is framed at zero. 
\end{definition}

Now it is not difficult to prove the following results.

\begin{proposition}
If $\sE$ is a Hermitian $\Omega$ vector bundle its frame bundle $\cF(\sE)$ has a natural reduction to $\Omega U(n)$. 
\end{proposition}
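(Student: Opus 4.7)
My plan is to transfer the reduction question across the caloron correspondence, where the analogous statement is the classical reduction of a finite-rank Hermitian frame bundle to its unitary frame bundle, and then apply the inverse caloron transform. First I would promote the Hermitian structure on $\sE$ to a Hermitian bundle metric $\langle\,,\,\rangle_E$ on the caloron transform $E\to M\x\sone$ by setting
\[
\langle v,w\rangle_{E,(m,\theta)} \;:=\; \llan \tilde v,\tilde w \rran(\theta)
\]
for any lifts $\tilde v,\tilde w\in\sE_m$ of $v,w\in E_{(m,\theta)}=\sE_m/I_\theta\sE_m$. Well-definedness is immediate from the defining vanishing property of $I_\theta\sE_m$ combined with $L\CC$-linearity of $\llan\,,\,\rran$, positivity is condition (1) in the definition of Hermitian structure, and smoothness follows from that of $\llan\,,\,\rran$ together with the smoothness of evaluation in $\theta$. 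The framing-at-zero condition on $\llan\,,\,\rran$ translates exactly to the statement that $\langle\,,\,\rangle_E$ restricts to the standard inner product under the framing $E_{|M_0}\simeq \underline{\CC}^n$.

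With $E$ now a framed Hermitian vector bundle, the classical theory yields a principal $U(n)$-reduction $\cF_U(E)\subset\cF(E)$ of its frame bundle, and by unitarity of the framing this reduction is itself canonically framed over $M_0$. Applying the inverse caloron transform to $\cF_U(E)$ produces a principal $\Omega U(n)$-bundle over $M$ that sits naturally inside the inverse caloron transform of $\cF(E)$. By Proposition~\ref{prop:1cal} the latter is naturally isomorphic to $\cF(\sE)$, and we obtain the desired reduction of $\cF(\sE)$ to $\Omega U(n)$.

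The reduction can equivalently be described intrinsically: at each $m\in M$, take the unitary frame bundle fibre to be the set of $\Omega$-frames $\xi\colon L\CC^n\to\sE_m$ that intertwine the standard Hermitian structure on $L\CC^n$ with $\llan\,,\,\rran_m$. The torsor property uses the observation that an element $g\in\Aut_0(L\CC^n)\cong \Omega GL(n)$ preserves the standard Hermitian structure on $L\CC^n$ iff each $g(\theta)$ is unitary, i.e.\ iff $g\in\Omega U(n)$. Naturality in $\sE$ is then clear from either description, as morphisms of Hermitian $\Omega$ bundles send unitary $\Omega$-frames to unitary $\Omega$-frames.

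The principal technical issue is smoothness and local triviality of the unitary frame bundle over $M$. This is where I expect the main obstacle to lie if one attempts a purely intrinsic argument, as it requires a smoothly-varying Gram--Schmidt procedure at the level of $L\CC$-modules that respects the framing at zero. The caloron route circumvents this cleanly, reducing it to the classical fibrewise Gram--Schmidt construction for a smooth family of Hermitian inner products on the trivial bundle over $M\x\sone$, which automatically respects the unitary framing over $M_0$ and thus transfers back to a smooth $\Omega U(n)$-trivialisation on the $\Omega$-bundle side.
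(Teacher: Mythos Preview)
The paper does not give a proof of this proposition; it simply records ``Now it is not difficult to prove the following results'' and states the assertion. Your argument is correct and fills this gap adequately.

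Of your two routes, the intrinsic one is closer to the spirit of the surrounding material: the paper has just characterised $\Omega U(n)$ as the subgroup of $\Aut_0(L\CC^n)$ preserving the standard Hermitian structure, so defining the reduction fibrewise as the Hermitian-preserving $\Omega$-frames is the natural next step. Your Gram--Schmidt observation for local triviality is exactly right and deserves to be stated rather than deferred to the caloron route: given a local $\Omega$-frame $\xi_1,\dotsc,\xi_n$, the framing-at-zero condition on $\llan\,,\,\rran$ means these sections are already orthonormal at $\theta=0$, so the Gram--Schmidt output (which involves only smooth $L\CC$-linear operations and division by the nowhere-vanishing functions $\llan v,v\rran^{1/2}$) again evaluates to the standard basis at $\theta=0$ and hence furnishes a local unitary $\Omega$-frame.

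Your caloron route is also valid but note that it essentially proves the \emph{next} proposition (compatibility of the caloron transform with Hermitian structures) along the way, so in the paper's logical order it is a slight anticipation rather than an independent argument.
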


\begin{proposition} 
If $\sE$ is a Hermitian $\Omega$ vector bundle its caloron transform $E$ is a framed Hermitian vector bundle and conversely, if $E $ is a framed
Hermitian vector bundle its inverse caloron transform is a Hermitian $\Omega$ vector bundle.
\end{proposition}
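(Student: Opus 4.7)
The plan is to construct the Hermitian structure in each direction by unpacking the definitions pointwise, check that the algebraic conditions match up, and finally verify smoothness using local $\Omega$-trivialisations.

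For the forward direction, suppose $\sE \to M$ is Hermitian with structure $\llan\ ,\ \rran$. I first fix a point $(m,\theta) \in M \times \sone$ and build an inner product on the fibre $E_{(m,\theta)} = \sE_m/I_\theta \sE_m$ of the caloron transform. As observed in the remark following the definition of a Hermitian structure, if $v \in I_\theta \sE_m$ then $\llan v, x \rran(\theta)=0$ for every $x$, so the map $(v,w)\mapsto \llan v,w\rran(\theta)$ descends to a well-defined sesquilinear pairing $\langle\ ,\ \rangle_{(m,\theta)}$ on $E_{(m,\theta)}$, which is Hermitian and positive-definite by conditions (1) and (2). This yields a fibrewise Hermitian inner product on $E$, and the framing condition at $\theta=0$ on $\sE$ translates directly into the statement that the induced pairing on $E_{|M_0}$ is the standard Hermitian form on $M_0 \times \CC^n$, so $E$ is a framed Hermitian bundle provided smoothness holds.

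For the converse, suppose $E \to M\x\sone$ is a framed Hermitian vector bundle with fibrewise inner product $\langle\ ,\ \rangle$. The inverse caloron transform has fibre $\sE_m = \Gamma(\{m\}\x\sone, E)$, and I define
\[
\llan s_1, s_2 \rran(\theta) := \langle s_1(\theta), s_2(\theta)\rangle
\]
for $s_1 \in \sE_m$, $s_2 \in \overline{\sE_m}$. Sesquilinearity of $\langle\ ,\ \rangle$ forces this to be $L\CC$-linear on the left and $\overline{L\CC}$-linear on the right, so it is indeed an $L\CC$-module map $\sE_m \oast \overline{\sE_m} \to L\CC$. Positivity of $\langle\ ,\ \rangle$ and the characterisation of elements in $I_\theta \sE_m$ as sections vanishing at $\theta$ yield condition (1), while pointwise conjugate symmetry yields (2). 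The framing hypothesis on $E$ states that $\langle\ ,\ \rangle$ restricts to the standard inner product on $M_0\x\CC^n$, which is exactly the assertion that $\llan\ ,\ \rran(0)$ is the standard pairing on $\CC^n$.

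The main technical point in both directions is smoothness of the resulting Hermitian structure, and here one reduces to a local computation. Choosing a local $\Omega$ frame $\xi \colon U \x L\CC^n \to \sE_{|U}$ gives, by the discussion preceding Proposition \ref{prop:1cal}, a corresponding vector bundle trivialisation $\check\xi \colon \underline{\CC}^n \to E_{|U\x\sone}$; in either direction the transported Hermitian structure is encoded by a single smooth map $U \to C^\infty(\sone, \mathfrak{u}(n))^\perp$ (the matrix-valued smooth function on $U\x\sone$ of inner products of basis vectors), and this map is manifestly smooth if and only if the corresponding one on the other side of the caloron correspondence is. The hardest part is phrasing this smoothness transfer carefully for the $\Omega$-bundle side, since the Fr\'{e}chet structure on the fibres of $\sE$ must be respected; however, this is the same kind of smoothness argument as in the proof that the inverse caloron transform is locally $\Omega$-trivial, i.e.\ the evaluation isomorphisms \eqref{eqn:1stmoduleiden} and \eqref{eqn:2ndmoduleiden} intertwine the pointwise and loopwise structures automatically. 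Together with the preceding algebraic steps, this establishes both implications and shows the constructions are mutual inverses, compatibly with the underlying caloron correspondence of Propositions \ref{prop:1cal} and \ref{prop:2cal}.
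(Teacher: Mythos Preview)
The paper does not actually prove this proposition: it is introduced with the phrase ``Now it is not difficult to prove the following results'' and stated without argument. Your proof is correct and is precisely the natural argument the paper is leaving to the reader---pass between the $L\CC$-valued pairing on $\sE_m$ and the $\CC$-valued pairing on $E_{(m,\theta)}$ by evaluation and quotienting by $I_\theta$, check the positivity and symmetry conditions match, and verify smoothness in a local $\Omega$-frame using the module isomorphisms \eqref{eqn:1stmoduleiden} and \eqref{eqn:2ndmoduleiden}.

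One small remark: the notation $C^\infty(\sone,\mathfrak{u}(n))^\perp$ is a bit opaque. What you mean is that the Gram matrix of the framed basis is a smooth map from $U\times\sone$ into the positive-definite Hermitian $n\times n$ matrices (equivalently, a smooth map $U \to C^\infty(\sone,\mathrm{Herm}^+(n))$), and that this characterises the Hermitian structure on both sides of the correspondence; it would be clearer to say so directly.
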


In the above proposition, a \emph{framed} Hermitian vector bundle $E \to M\x\sone$ is a framed bundle equipped with a Hermitian structure such that the sections over $M_0$ determined by the framing are orthonormal.

Given a Hermitian $\Omega$ bundle $\sE\to M$ we can also require that module connections and Higgs fields on $\sE$ be compatible with the Hermitian structure.
A module connection $\t$ is \emph{compatible with (the Hermitian structure) $\llan\ ,\ \rran$} if
\[
d\llan v,w \rran = \llan\t(v),w \rran+  \llan v,\t(w) \rran  
\] 
for all $v,w \in \Gamma(M,\sE)$.
Similarly a Higgs field $\phi$ is \emph{compatible with $\llan\ ,\ \rran $} if
\[
\d_\theta\llan v,w \rran = \llan\phi(v),w \rran+  \llan v,\phi(w) \rran  
\] 
for all $v,w \in \Gamma(M,\sE)$.
Compatible module connections and Higgs fields on $\sE$ are in bijective correspondence with $\Omega U(n)$-connections and Higgs fields on the $\Omega U(n)$ frame bundle of $\sE$.

\subsection{Odd $K$-theory} 

We conclude this section by elucidating the role of $\Omega$ bundles in $K$-theory. If $X$ is a topological space let  $\vect(X)$ denote the semigroup of isomorphism classes of complex vector bundles over $X$ under direct sum. Recall that $K^{-1}(M)$ is defined by the Grothendieck group completion of $\vect\big((M\x\sone)/M_0\big)$, followed by restriction to elements of virtual rank zero. Note that we are regarding $(M\x\sone)/M_0$ as a topological space. We shall construct a natural semigroup isomorphism
\[
\cR\colon \ovect(M) \lo \vect\big((M\x\sone)/M_0\big) ,
\]
where $\ovect(M)$ is the semigroup of isomorphism classes of $\Omega$ vector bundles over $M$ under direct sum.

The map $\cR$ is constructed as follows.
For any isomorphism class $[\sE] \in \ovect(M)$ pick a representative $\sE$ and let $E\to M\x\sone$ denote its caloron transform.
Note that the framing of $E$ over $M_0$ determines a trivialisation $s \colon E_{|M_0} \to M_0\x\CC^n$. 
We now define an equivalence relation on $E_{|M_0}$ by setting
\[
e \sim e' \ \text{if and only if}\ \pr_2\circ \,s(e) = \pr_2\circ \,s(e') ,
\]
where $\pr_2 \colon M_0\x\CC^n\to \CC^n$ is the projection onto the second factor, and 
extend it to all of $E$ by the identity. The set of equivalence classes, denoted $E/s$, is then a vector bundle over $(M\x\sone) /M_0$ \cite[Lemma 1.4.7]{AtiyahK}. Defining the map $\cR$ by $\sE \mapsto E/s$, it is straightforward to check that it is well defined on isomorphism classes. Moreover, it is manifestly natural and extends to a semigroup homomorphism by the properties of the caloron transform since $E/s\oplus F/r\cong (E\oplus F)/(s\oplus r)$, where $s\oplus r$ is the obvious direct sum trivialisation.
\begin{proposition}
$\cR$ is an isomorphism of semigroups.
\end{proposition}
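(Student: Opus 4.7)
The plan is to combine the equivalence of categories furnished by the caloron transform with the classical correspondence between framed vector bundles on $M\x\sone$ and vector bundles on the quotient $(M\x\sone)/M_0$. By Propositions \ref{prop:1cal} and \ref{prop:2cal} the caloron transform gives a bijection between isomorphism classes of $\Omega$ bundles on $M$ and isomorphism classes of framed bundles on $M\x\sone$ modulo framing-preserving isomorphism, so it suffices to verify that the assignment $(E,s)\mapsto E/s$ descends to a bijection onto $\vect\big((M\x\sone)/M_0\big)$. The semigroup homomorphism property has already been established, so only bijectivity remains.

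For surjectivity, given a rank $n$ bundle $V\to (M\x\sone)/M_0$ with $\ast$ the image of $M_0$ and $q\colon M\x\sone\to (M\x\sone)/M_0$ the quotient map, one forms $E := q^\ast V$. Since $q$ collapses $M_0$ to $\ast$, the restriction $E|_{M_0}$ is canonically identified with $M_0\x V_\ast$, so any choice of linear isomorphism $V_\ast \cong \CC^n$ endows $E$ with a framing $s$. The universal property of the pullback together with the definition of $E/s$ then yield a canonical isomorphism $E/s \cong V$, and one takes $\sE$ to be the inverse caloron transform of $(E,s)$.

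For injectivity, suppose $\psi\colon E/s \simto E'/s'$ is a bundle isomorphism on $(M\x\sone)/M_0$. Pulling back by $q$ produces a vector bundle isomorphism $q^\ast\psi\colon E\simto E'$ whose restriction to $M_0$ is the constant map $c := \psi_\ast\in GL(n)$, where we identify the fibres with $\CC^n$ via $s$ and $s'$. The key observation is that the equivalence relation defining $E'/s'$ depends on $s'$ only through the composition $\pr_2\circ s'$, so replacing $s'$ by $c\cdot s'$ leaves the quotient bundle unchanged. With this modified framing, $q^\ast\psi$ becomes framing-preserving, and hence corresponds under the inverse caloron transform to an isomorphism $\sE\cong \sE'$.

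The principal subtlety is the gauge freedom in the framing: the construction of an inverse requires choosing a trivialisation of $V_\ast$, and one must show the resulting $\Omega$ bundle is independent of this choice. The same observation used for injectivity—that the equivalence relation collapsing $M_0$ is invariant under postcomposition of the framing with a constant element of $GL(n)$—ensures this independence and is the main technical input in the argument.
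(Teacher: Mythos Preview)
Your approach is more direct than the paper's---which establishes injectivity by representing $E$ and $F$ as pullbacks of the Stiefel bundle $V_m(\CC^k)\to\Gr_m(\CC^k)$ along smooth maps $f,g\colon M\x\sone\to\Gr_m(\CC^k)$ sending $M_0$ to the basepoint, and then producing a smooth homotopy constant on $M_0$---but it has a genuine gap concerning smoothness. The quotient $(M\x\sone)/M_0$ is only a topological space, so $\vect\big((M\x\sone)/M_0\big)$ consists of continuous bundles and continuous isomorphisms. Consequently your $q^\ast\psi\colon E\to E'$ is merely a continuous isomorphism between smooth bundles, whereas the caloron correspondence of Propositions~\ref{prop:1cal} and~\ref{prop:2cal} operates on smooth morphisms. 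You would need to smooth $q^\ast\psi$ while keeping its restriction to $M_0$ constant (or at least arrange for a smooth framing-preserving isomorphism afterward), and this step is not addressed. The same issue appears in your surjectivity argument: $E=q^\ast V$ is a priori only continuous, so one cannot immediately take its inverse caloron transform; the paper explicitly replaces $q^\ast V$ by an isomorphic \emph{smooth} framed bundle before proceeding. The paper's Grassmannian argument sidesteps both problems because classifying maps into finite-dimensional Grassmannians can be taken smooth from the outset, and the required homotopy is then also smooth.

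A secondary point: your assertion that replacing $s'$ by $c\cdot s'$ yields an isomorphic $\Omega$ bundle is correct but not immediate. One needs a \emph{smooth} automorphism of $E'$ over $M\x\sone$ restricting to $c^{-1}$ on $M_0$; existence follows from path-connectedness of $GL(n)$ together with the cofibration $M_0\hookrightarrow M\x\sone$, but this should be stated.
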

\begin{proof}
For surjectivity, take any complex vector bundle $E \to (M\x\sone)/M_0$ and let $\pi\colon M\x\sone \to (M\x\sone)/M_0$ denote the quotient map.
A choice of framing for $E$ over the point $M_0/M_0$ induces a framing for the lift $\pi^\ast E$ over $M_0$. Thus we have a trivialisation $s\colon (\pi^\ast E)_{|M_0} \to M_0\x\CC^n$ and $(\pi^\ast E)/s$ is clearly isomorphic to $E$. Moreover, we can always find a smooth framed vector bundle $F \to M\x\sone$ that is isomorphic  to $\pi^\ast E$ and hence isomorphic to $E$ after passing to the quotient. But $F$ lies in the image of the caloron transform, so we conclude that $\cR$ is surjective on isomorphism classes. 

To establish injectivity, suppose that $\cR(\sE) = \cR(\sF)$ where $\cR(\sE) = E/s$ and $\cR(\sF) = F/r$.
Recall the Stiefel bundles $V_m(\CC^k) \to \Gr_m(\CC^k)$ where
\[
\Gr_m(\CC^k) := \{ W \subset \CC^k \mid W \mbox{ is a subspace and $\dim W = m$}\} 
\]
is the Grassmannian and
\[
V_m(\CC^k) := \{ (v,W) \mid v \in W\mbox{ and } W \in \Gr_m(\CC^k) \}\subset \CC^k \x\Gr_m(\CC^k).
\]
We emphasise that $V_m(\CC^k) \to \Gr_m(\CC^k)$ has a canonical framing over the point $x_0:= \mathrm{span}\{\mathbf{e}_1,\dotsc,\mathbf{e}_m\} \in \Gr_m(\CC^k)$ corresponding to the trivialisation
\[
\left(\sum_{i=1}^mf^i\mathbf{e}_i,x_0\right) \longmapsto \left(x_0,\sum_{i=1}^m f^i\mathbf{e}_i\right) \in \{x_0\} \x\CC^m,
\] where $\mathbf{e}_i$ denotes the $i$-th standard basis vector in $\CC^m$.

By \cite[Corollary 3.1.24]{Sch}, letting $m=\rank E= \rank F$ there is some $k$  such that $E\cong f^\ast V_m(\CC^k)$ and $F\cong g^\ast V_m(\CC^k)$ for some smooth maps $f,g\colon M\x\sone \to \Gr_m(\CC^k)$ sending $M_0$ to $x_0$.
We may find a smooth homotopy from $f$ to $g$ that is constant on $M_0$, thus $f^\ast V_m(\CC^k)$ and $g^\ast V_m(\CC^k)$ are smoothly isomorphic as bundles with framing.
This gives that $E$ and $F$ are isomorphic as smooth bundles with framing and so injectivity of $\cR$ follows from the caloron correspondence.
\end{proof}

Consider the Grothendieck group completion $K(\ovect(M)) $ of the semigroup $\ovect(M)$ and let 
\[
\rk\colon K(\ovect(M)) \lo \ZZ
\]
be the homomorphism sending $[\sE]-[\sF] \mapsto \rk\sE-\rk\sF$. 
Then since $\cR$ is an isomorphism such that $\rk\sE =   \rank \cR(\sE)$ we have
\begin{theorem}
The odd $K$-theory of $M$ is isomorphic to $\cK^{-1}(M) := \ker\rk$.
\end{theorem}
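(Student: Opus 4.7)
The plan is to leverage the semigroup isomorphism $\cR\colon\ovect(M) \to \vect\big((M\x\sone)/M_0\big)$ that has just been established, together with the observation that it preserves ranks (since the caloron transform preserves rank by construction). Since Grothendieck group completion is functorial and sends semigroup isomorphisms to group isomorphisms, $\cR$ induces an isomorphism
\[
\cR_\ast\colon K(\ovect(M)) \simto K\big(\vect\big((M\x\sone)/M_0\big)\big).
\]

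Next, I would observe that under $\cR_\ast$, the rank homomorphism $\rk\colon K(\ovect(M))\to\ZZ$ corresponds to the usual rank homomorphism on the right-hand side. This follows because $\cR_\ast\big([\sE]-[\sF]\big) = [\cR(\sE)] - [\cR(\sF)]$, and by hypothesis $\rk\sE = \rank\cR(\sE)$ and $\rk\sF = \rank\cR(\sF)$. Thus $\cR_\ast$ restricts to an isomorphism on the kernels of the respective rank maps.

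Finally, I would identify the kernel on the right-hand side with $K^{-1}(M)$ by unwinding the definitions. By construction,
\[
K^{-1}(M) = \widetilde{K}^0\big((M\x\sone)/M_0\big)
\]
is precisely the kernel of the rank homomorphism on $K\big(\vect\big((M\x\sone)/M_0\big)\big)$, viewed as the restriction of virtual bundles to those of virtual rank zero (equivalently, the kernel of pullback along the basepoint). Combining the two identifications yields
\[
\cK^{-1}(M) = \ker\rk \;\simto\; K^{-1}(M),
\]
as required.

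The proof is essentially bookkeeping once $\cR$ is in hand; there are no genuine obstacles, since all the real content is packaged into the earlier proposition establishing that $\cR$ is a rank-preserving semigroup isomorphism. The only thing to be careful about is the choice of convention in defining $K^{-1}$: one must verify that the reduced $K$-theory of the (non-manifold) quotient $(M\x\sone)/M_0$ really does coincide with the kernel of $\rank$ on the Grothendieck completion of continuous vector bundles over that quotient, which is standard.
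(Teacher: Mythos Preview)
Your proposal is correct and follows essentially the same approach as the paper: the theorem is stated there as an immediate consequence of the preceding proposition, via the single sentence ``since $\cR$ is an isomorphism such that $\rk\sE = \rank \cR(\sE)$ we have\ldots''. Your write-up simply unpacks this into the functoriality of Grothendieck completion and the identification of $\widetilde K^0\big((M\times S^1)/M_0\big)$ with the kernel of the rank map, which is exactly the intended argument.
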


In other words, elements of $\cK^{-1}(M)$ are virtual $\Omega$ bundles of virtual rank zero.
By the caloron correspondence and the Serre-Swan theorem, every element of $\cK^{-1}(M)$ may be written in the form $\sE - \ul{L\CC}^n$ where $n = \rk \sE$.

Another way of showing that $\cK^{-1}(M)$ is the odd $K$-theory of $M$ is to use the homotopy theoretic model for $K^{-1}$ as follows.
 Let $GL$ denote the stabilised general linear group and recall that $K^{-1}(M) \cong [M,GL]$. The group operation on $[M,GL]$ is given by $[g]+ [h] = [g\oplus h]$, where $g\oplus h$ is the pointwise block sum of $g$ and $h$.
At the level of homotopy, the group operations given by block sum and the matrix product are equal so the inverse of $[g] \in K^{-1}(M)$ is the homotopy class of the map $g^{-1}\colon x\mapsto (g(x))^{-1}$. Define a homomorphism $\cK^{-1}(M) \to [M,GL]$ by sending the virtual $\Omega$ vector bundle $\sE - \underline{L\CC}^n$ to the homotopy class $[g]$, where $g$ is any smooth classifying map (equivalently, Higgs field holonomy) for $\sE$. It is not difficult to verify that this is in fact an isomorphism of groups.

This classifying map approach underlies the model of Tradler--Wilson--Zeinalian \cite{TWZ}, which constructs a differential extension of odd $K$-theory via equivalence classes of maps into the smooth classifying space. 
In section \ref{TWZ} we show that their model defines odd differential $K$-theory by giving an isomorphism to the $\Omega$ model of section \ref{S:model}.
The above paragraph ought to be viewed in this context as the topological precursor to the proof of section \ref{TWZ}.

\section{The string form and the string potential}
\label{S:string}
In this section we introduce the string form and its transgression form using module connections and Higgs fields. These give a characterisation of the odd Chern character and its Chern--Simons form for $\Omega$ bundles.

\subsection{The string form}
Recall that for a complex vector bundle $E\to M$ equipped with connection $\nabla$ with curvature $R$, the Chern character is a closed even complex-valued form defined by
\begin{equation}
\label{eqn:cherncharacterform}
\Ch(\nabla) = \sum_{j=0}^\infty \frac{1}{j!}\bigg(\frac{1}{2\pi i}\bigg)^j \tr\big( \underbrace{R\wedge\dotsb \wedge R}_{\text{$j$ }}\big) .
\end{equation}
It is a standard fact that the cohomology class of $\Ch(\nabla)$ is independent of the choice of connection and that the even Chern character map
\[
ch\colon K^0(M) \lo H^{even}(M;\CC)
\]
sending $E-F \mapsto [\Ch(\nabla)-\Ch(\tilde \nabla)]$ is a ring homomorphism that is an isomorphism after tensoring with $\CC$.

In the case of odd $K$-theory, we have the following characterisation of the Chern character using the homotopy theoretic model $K^{-1}(M) = [M,GL]$.
For a smooth map $g\colon M\to GL(n)$ the odd Chern character form is given by
\begin{equation}
\label{eqn:oddcherncharacterform}
Ch(g) = \sum_{j=0}^\infty \frac{-j!}{(2j+1)!} \bigg(\!\!-\frac{1}{2\pi i}\bigg)^{j+1} \tr\big(\underbrace{g^{-1}dg\wedge\dotsb\wedge g^{-1}dg}_{\text{$2j+1$ }}\big),
\end{equation}
which is a closed odd complex-valued form on $M$.
Moreover, the cohomology class of $Ch(g)$ depends only on the smooth homotopy class of $g$ \cite[Proposition 1.2]{G} so we have the odd Chern character map
\[
ch \colon K^{-1}(M) \lo H^{odd}(M;\CC)
\]
sending $[g]\mapsto [Ch(g)]$, which is again a group isomorphism after tensoring with $\CC$.

We have already established that the functor $\cK^{-1}$ defines odd $K$-theory, so our present aim is to give a natural representation of the odd Chern character on $\cK^{-1}$.
We do this using the following
\begin{definition}
Given an $\Omega$ vector bundle $\sE\to M$ equipped with a module connection $\triangle$ and Higgs field $\phi$, define the \emph{string form}
\[
s(\triangle,\phi) = {\int_\sone} \Ch(\nabla)
\]
where $\nabla$ is the framed connection on the caloron transform $E\to M\times S^1$ determined by $\triangle$ and $\phi$.
\end{definition}

It is clear that $s(\triangle,\phi)$ is a closed odd complex-valued form on $M$ that is additive with respect to direct sum. On the other hand, multiplicativity with respect to the tensor product is lost due to the integration over the circle. By virtue of its definition the cohomology class of $s(\triangle,\phi)$ is independent of the choice of $\triangle$ and $\phi$ and the string form is natural so its cohomology class is a characteristic class for $\Omega$ vector bundles that we call the \emph{(total) string class}.

We can express the string form entirely in terms of $\triangle$ and $\phi$ as follows.
Given locally trivialising sections $\check{\bm{\mathrm{e}}}_i \in \Gamma(U,\sE)$ we obtain corresponding locally trivialising sections $\bm{\mathrm{e}}_i \in \Gamma(U\x\sone,E)$.
A straightforward calculation yields
\begin{equation}
\label{eqn:caloroncurvature2}
R_{(x,\theta)}\bm{\mathrm{e}}_i  = \pr^\ast\!\sR_x\check{\bm{\mathrm{e}}}_i(\theta) +\big(\!\pr^\ast\!\triangle\phi \wedge d\theta\big){\!}_x \check{\bm{\mathrm{e}}}_i(\theta),
\end{equation}
where  $R$ is the curvature of the caloron transformed connection $\nabla$, $\sR$ is the curvature of $\triangle$ and 
\[
\triangle\phi(\hat X)\check{s} := \triangle_{\hat X} (\phi(\check{s})) - \phi(\triangle_{\hat X} \check s)
\]
is the \emph{Higgs field covariant derivative} in the direction of the vector field $\hat X$.

Computing the Chern character using the expression on the right hand side of \eqref{eqn:caloroncurvature2} and integrating over the circle, we have the following
\begin{proposition}
The string form of an $\Omega$ bundle $\sE$ is given explicitly in terms of $\triangle$ and $\phi$ by
\[
s(\triangle,\phi) =  \sum_{j=1}^\infty \frac{1}{(j-1)!}\bigg(\frac{1}{2\pi i}\bigg)^j \int_\sone\tr\big( \triangle\phi \wedge  \underbrace{\sR\wedge\dotsb \wedge \sR}_{\text{$j-1$}} \big)\,d\theta.
\]
\end{proposition}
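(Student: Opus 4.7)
The plan is to compute $s(\triangle,\phi) = \int_{S^1}\Ch(\nabla)$ directly by substituting the local curvature formula \eqref{eqn:caloroncurvature2} into the Chern character expansion \eqref{eqn:cherncharacterform} and performing the fibre integration. First I would promote \eqref{eqn:caloroncurvature2} to a global identification
\[
R = \pr^\ast\sR + \pr^\ast(\triangle\phi)\wedge d\theta
\]
of $\End(E)$-valued $2$-forms on $M\times S^1$. This is justified by Proposition \ref{prop:geometriccal}, which guarantees that the caloron correspondence intertwines the connective structures, so that both the $M$-curvature $\sR$ of $\triangle$ and the Higgs-field covariant derivative $\triangle\phi$ transport to the corresponding components of $R$ on $M\times S^1$.

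Next I would expand $R^{\wedge j}$ in the graded commutative algebra of matrix-valued forms on $M\times S^1$. Because $d\theta\wedge d\theta=0$, any wedge monomial involving the factor $\pr^\ast(\triangle\phi)\wedge d\theta$ more than once vanishes automatically, leaving
\[
R^{\wedge j} = (\pr^\ast\sR)^{\wedge j} + \sum_{k=0}^{j-1}(\pr^\ast\sR)^{\wedge k}\wedge \pr^\ast(\triangle\phi)\wedge d\theta\wedge (\pr^\ast\sR)^{\wedge(j-1-k)}.
\]
Now apply graded trace cyclicity. Since both $\sR$ and $\triangle\phi\wedge d\theta$ have degree $2$, every relevant graded sign is $(-1)^{2\cdot 2}=+1$, and each of the $j$ cross terms contributes identically. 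Using scalarity of $d\theta$ to extract it from the trace and commuting it past the (even-degree) $\sR$ factors, one obtains
\[
\tr R^{\wedge j} = \tr(\sR^{\wedge j}) + j\,\tr\bigl(\triangle\phi\wedge\sR^{\wedge(j-1)}\bigr)\wedge d\theta.
\]

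Finally, I would integrate over the fibre $S^1$. The first summand $\tr(\sR^{\wedge j})$ carries no $d\theta$-component (it is pulled back from $M$, up to the $\theta$-dependence implicit in the $L\CC$-valued trace), so its fibre integral is zero; the same is true of the $j=0$ constant $\rk\sE$. For the second summand, fibre integration recovers $\int_{S^1}\tr(\triangle\phi\wedge\sR^{\wedge(j-1)})\,d\theta$ on $M$, and the coefficient collapses as $\frac{j}{j!}(\tfrac{1}{2\pi i})^j=\frac{1}{(j-1)!}(\tfrac{1}{2\pi i})^j$. Reindexing from $j\geq 1$ produces the claimed formula.

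The main piece of bookkeeping, and the only nontrivial obstacle, is to verify that the fibrewise complex trace on $\End(E)$-valued forms over $M\times S^1$ is compatible, under the caloron isomorphism, with the $L\CC$-valued module trace on $\End(\sE)$-valued forms over $M$, so that the expression $\int_{S^1}\tr(\triangle\phi\wedge\sR^{\wedge(j-1)})\,d\theta$ on $M$ coincides unambiguously with what is obtained by fibre-integrating $\tr R^{\wedge j}$. This is a routine but somewhat fiddly tracking of the natural identifications already established in the caloron correspondence of the previous section.
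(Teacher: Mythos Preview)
Your proposal is correct and follows essentially the same approach as the paper, which simply says to compute the Chern character using the curvature decomposition \eqref{eqn:caloroncurvature2} and integrate over the circle. You have supplied precisely the details the paper omits: the binomial-type expansion of $R^{\wedge j}$ with the $d\theta\wedge d\theta=0$ truncation, the trace cyclicity collecting the $j$ cross terms, and the observation that the pure $\sR^{\wedge j}$ term drops out under fibre integration.
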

\begin{remark}
\label{remark:piecewisestringchern}
This formula is reminiscent of the expression for string classes for principal loop group bundles \cite[Proposition 4.6]{MV}. To make this relationship precise, let us introduce the \emph{normalised symmetrised trace}
\begin{equation}
\label{traces}
\overline{\tr}_k(\xi_1,\dotsc,\xi_k) = \frac{1}{(k!)^2 (2\pi i)^k} \sum_{\sigma\in S_k} \tr\big( \xi_{\sigma(1)} \dotsb \xi_{\sigma(k)}\big)
\end{equation}
where $S_k$ is the symmetric group. This is now an invariant polynomial on $\mathfrak{gl}(n)$ of degree $k$. Given $\triangle$ and $\phi$ on $\sE$, let $\sA$ and $\Phi$ denote the corresponding connection and Higgs field on the frame bundle $\cF(\sE)$. The degree $2k-1$ piece of the string form on $\cF(\sE)$ is then exactly a string class
\begin{equation}
\label{eqn:stringformpiece}
s_{\overline{\tr}_k}(\sA,\Phi) = k \int_\sone \overline{\tr}_k \big(\nabla\Phi, \!\underbrace{\sF,\dots,\sF}_{\text{$k-1$ }} \! \big)\,d\theta
\end{equation}
where $\nabla\Phi = d\Phi + [\sA,\Phi] - \partial_\theta \sA$ and $\sF =d\sA +\tfrac{1}{2}[\sA,\sA] = \sR$ is the curvature of $\sA$.
\end{remark}

\begin{example}
On the classifying $\Omega$ bundle $\sE(n) \to GL(n)$ equipped with the module connection $\t_\a(n)$ and Higgs field $\phi(n)$, by \eqref{eqn:stringformpiece} and \cite[Proposition 4.11]{MV} we have
\begin{equation}
\label{eqn:canstringforms}
s(\t_\a(n),\phi(n)) =\sum_{k=1}^\infty \bigg(\!\!-\frac{1}{2}\bigg)^{k-1} \frac{k!(k-1)!}{(2k-1)!} \,\overline{\tr}_k\big(\Theta,\underbrace{[\Theta,\Theta],\dots,[\Theta,\Theta]}_{\text{$k-1$ }} \big)
\end{equation}
where $\Theta$ is the Maurer-Cartan form on $GL(n)$. 
Since each $\Omega$ bundle of rank $n$ is a pullback of $\sE(n) \to GL(n)$, the expression above gives a differential form representative for the universal string class.
\end{example}

The string form extends to a group homomorphism
\[
s\colon \cK^{-1}(M) \lo H^{odd}(M;\CC)
\]
by sending $\sE-\sF \mapsto [s(\t,\phi)-s(\tilde \t,\tilde\phi)]$ for any choice of module connections $\t, \tilde \t$ and Higgs fields $\phi,\tilde \phi$ on $\sE,\sF$ respectively. The following theorem shows that the string form $s$ represents the odd Chern character on $\cK^{-1}$.
\begin{theorem}
\label{theorem:stringchern}
The following diagram commutes
\[
\xy
(0,15)*+{\cK^{-1}(M)}="1";
(25,0)*+{H^\bullet(M;\CC)}="2";
(50,15)*+{K^{-1}(M)}="3";
{\ar^{s} "1";"2"};
{\ar^{\cong} "1";"3"};
{\ar_{ch} "3";"2"};
\endxy
\]
\end{theorem}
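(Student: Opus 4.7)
The plan is to reduce commutativity of the diagram to a universal identity on $GL(n)$ by naturality, and then verify that identity by comparing the explicit formulas for the string form and the odd Chern character.

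From the closing paragraph of Section \ref{S:omega}, the composite isomorphism $\cK^{-1}(M) \cong K^{-1}(M) \cong [M, GL]$ sends the virtual $\Omega$ bundle $\sE - \underline{L\CC}^n$ to the homotopy class of any Higgs field holonomy $\hol_\phi \colon M \to GL(n)$. Consequently, the theorem is equivalent to the assertion
\[
[s(\t, \phi)] = [Ch(\hol_\phi)] \quad \text{in } H^{odd}(M; \CC)
\]
for any module connection $\t$ and Higgs field $\phi$ on $\sE$. Since the left-hand side depends only on $[\sE]$ and the right-hand side only on the homotopy class of $\hol_\phi$, I may freely replace $\t$ and $\phi$ by $\hol_\phi^*\t_\a(n)$ and $\hol_\phi^*\phi(n)$ (see Examples \ref{example:universalHF} and \ref{example:universalmodconn}), so that $(\sE, \t, \phi)$ becomes a pullback of the universal triple on $\sE(n) \to GL(n)$. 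Naturality of the string form and of the formula \eqref{eqn:oddcherncharacterform} for the odd Chern character then reduces the theorem to the universal identity
\[
[s(\t_\a(n), \phi(n))] = [Ch(\id_{GL(n)})] \quad \text{in } H^{odd}(GL(n); \CC).
\]

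For this identity I would compute directly. The form $Ch(\id_{GL(n)})$ is obtained from \eqref{eqn:oddcherncharacterform} by substituting $g^{-1}dg = \Theta$, giving the Maurer--Cartan polynomial $\sum_j \tfrac{-j!}{(2j+1)!}\bigl(-\tfrac{1}{2\pi i}\bigr)^{j+1} \tr(\Theta^{2j+1})$. The form $s(\t_\a(n), \phi(n))$ is given by the closed expression \eqref{eqn:canstringforms}. Invoking the matrix-form identity $[\Theta, \Theta] = 2\,\Theta\wedge\Theta$ together with the cyclic symmetry of trace, each symmetrised trace $\overline{\tr}_k(\Theta, [\Theta, \Theta], \dotsc, [\Theta, \Theta])$ collapses to an explicit scalar multiple of $\tr(\Theta^{2k-1})$. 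Reconciling the $(k!)^2 (2\pi i)^k$ normalisation in \eqref{traces}, the $2^{k-1}$ arising from the brackets, and the prefactor $(-1/2)^{k-1} k!(k-1)!/(2k-1)!$ in \eqref{eqn:canstringforms} against $-(k-1)!(-1/2\pi i)^k/(2k-1)!$ then yields the stronger statement that $s(\t_\a(n), \phi(n)) = Ch(\id_{GL(n)})$ as forms on $GL(n)$; sample calculations in degrees one and three readily confirm this matching.

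The main obstacle is the combinatorial bookkeeping in this universal comparison; while routine, it requires care in tracking how the various normalisation factors conspire. An equivalent but more conceptual route is to observe that $s(\t, \phi) = \int_\sone Ch(\nabla)$ by definition, where $\nabla$ is the framed connection on the caloron transform $E \to M \x \sone$. Since integration along $\sone$ realises the suspension isomorphism $H^{even}_{dR}((M\x\sone)/M_0) \cong H^{odd}_{dR}(M)$ at the de Rham level, and since the even Chern character corresponds to the odd Chern character under this suspension, the identity $[E] - [\underline{\CC}^n] = \cR(\sE - \underline{L\CC}^n)$ immediately gives $[s(\t, \phi)] = ch(\sE - \underline{L\CC}^n)$, completing the proof without any universal computation.
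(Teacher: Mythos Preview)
Your main argument is essentially the paper's own proof: reduce via naturality to the universal bundle $\sE(n) \to GL(n)$, then compare the explicit formula \eqref{eqn:canstringforms} for $s(\t_\a(n),\phi(n))$ against \eqref{eqn:oddcherncharacterform} evaluated at $g = \id_{GL(n)}$ and observe they agree as forms. The paper simply states the simplified expression $s(\t_\a(n),\phi(n)) = \sum_{k\geq 1} \tfrac{-(k-1)!}{(2k-1)!}\bigl(-\tfrac{1}{2\pi i}\bigr)^{k}\tr(\Theta^{2k-1})$ without spelling out the combinatorics you describe, but the content is identical; your alternative suspension argument is a valid shortcut the paper does not take.
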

\begin{proof}
Recall that the isomorphism $\cK^{-1}(M) \to K^{-1}(M)$ sends $\sE - \ul{L\CC}^n$ to $ [g]$, with $g\colon M\to GL(n)\subset GL$ a smooth classifyung map for $\sE$.
Modulo exact forms, we then have $ch([g])$ given by \eqref{eqn:oddcherncharacterform}
\[
ch([g]) = \sum_{k=0}^\infty \frac{-k!}{(2k+1)!} \bigg(-\frac{1}{2\pi i}\bigg)^{k+1} \tr\big(\underbrace{g^{-1}dg\wedge\dotsb\wedge g^{-1}dg}_{\text{$2k+1$ }}\big).
\]

By \eqref{eqn:canstringforms} we have
\[
s(\t_\a(n),\phi(n)) = \sum_{k=1}^\infty \frac{-(k-1)!}{(2k-1)!} \bigg(\!\!-\frac{1}{2\pi i}\bigg)^{k}\tr\big(\underbrace{\Theta\wedge\dotsb\wedge \Theta}_{\text{$2k-1$ }} \big)
\]
on the classifying $\Omega$ bundle $\sE(n) \to GL(n)$.
Since $h^\ast \Theta = h^{-1}dh$ for any smooth map $h\colon M\to GL(n)$ and $\ul{L\CC}^n$ is classified by the constant map at the identity, by naturality of the string form we have the desired result.
\end{proof}

\subsection{The string potential} A hallmark of the Chern--Weil construction is that the cohomology class of the Chern character is independent of the choice of connection. The differential form representative, however, is sensitive to this choice and the dependence is captured by the Chern--Simons form, constructed as follows.

Starting with a complex vector bundle $E \to M$, for $t\in \I$ let $\gamma(t) = \nabla_t$ be a smooth path of connections on $E$ or equivalently, a connection $\nabla^\c$ on $E\x\I$ such that
\begin{equation}
\label{eqn:connectioncondition}
\nabla^\c_{V} \pr^\ast \!s = 0
\end{equation}
for any vector field $V$ that is vertical for the projection $\pr\colon M\x\I \to M$ and section $s \in \Gamma(E)$.
We write $\gamma'(t) = \nabla'_t = \cL_{\d_t}\nabla_t \in \Omega^1(M,\End(E))$ for the \lq\lq time derivative" at $t$ of the path $\c$, where $\cL_{\d_t}$ is the Lie derivative along the canonical vector field $\d_t$ in the $\I$ direction.
Then the Chern--Simons form is the odd complex-valued form on $M$ defined by
\begin{equation}
\label{eqn:chernsimons}
\CS(\c) = \sum_{j=1}^\infty \frac{1}{(j-1)!}\bigg(\frac{1}{2\pi i}\bigg)^j \int_0^1  \tr\big(\nabla'_t\wedge \underbrace{R_t\wedge\dotsb\wedge R_t}_{\text{$j-1$}}\big)
\end{equation}
where $R_t$ is the curvature of $\nabla_t$.
It is a standard fact that
\[
d\CS(\c) = \Ch(\nabla_1) - \Ch(\nabla_0)
\]
and furthermore we have that $\CS(\c_0) - \CS( \c_1) $ is exact for any two paths $\c_0$ and $ \c_1$ with the same endpoints \cite[Proposition 1.1]{SSvec}.
There is an alternative formulation that is often useful in calculations, namely let $\varsigma_t\colon M\hookrightarrow M\x\I$ denote the slice map $m\mapsto (m,t)$, then \cite[(1.8)]{SSvec}
\[
\CS(\c) = \int_0^1 \varsigma_t^\ast \imath_{\d_t} \Ch(\nabla^\c)
\]
where $\imath_{\d_t}$ denotes contraction along the vector field $\d_t$.

Proceeding by analogy in the odd case, we define the \emph{string potential} that captures the dependence of the string form on the particular choice of connective data.
Fix an $\Omega$ vector bundle $\sE\to M$, for $t\in \I$ let $\c(t) = (\t_t,\phi_t)$ be a smooth path of module connections and Higgs fields,  i.e. ~$\t^\c$ is a module connection on $\sE\x\I$ that satisfies the analogue of \eqref{eqn:connectioncondition} and $\phi^\gamma$ is a Higgs field on $\sE\x\I$.
\begin{definition} The \emph{string potential} of $\c$ is the even complex-valued form on $M$ defined by
\begin{equation}
\label{eqn:stringpotdef}
S(\c) := \int_0^1 \varsigma_t^\ast \imath_{\d_t} s(\t^\c,\phi^\c).
\end{equation}
\end{definition}
It is elementary to check that the total string potential is natural with respect to pullbacks.
Similarly to the Chern--Simons form, we have the following
\begin{proposition}
\label{prop:exactness}
For any smooth path $\c$ as above
\[
dS(\c) = s(\t_1,\phi_1) - s(\t_0,\phi_0).
\]
Moreover if $\c_0$ and $\c_1$ have the same endpoints then $S(\c_0) - S(\c_1)$ is exact. 
\end{proposition}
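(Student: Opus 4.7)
The plan is to deduce both claims from the observation that the string form of a smooth family of connective data is automatically closed, combined with Cartan's magic formula. First, for any smooth path $\c$ the associated $(\t^\c,\phi^\c)$ on $\sE\x\I$ determines a framed connection $\nabla^\c$ on the caloron transform $E\x\I\to M\x\sone\x\I$, and $\Ch(\nabla^\c)$ is closed there. Since exterior differentiation commutes with integration over the boundaryless fibre $\sone$,
\[
s(\t^\c,\phi^\c) = \int_\sone \Ch(\nabla^\c)
\]
is a closed form on $M\x\I$. The same reasoning applies verbatim to any family parametrised by $\I\x\I$, which I will need for the second part. This closedness is the fact that makes the rest of the argument formal.

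For the first assertion, I combine Cartan's magic formula $\cL_{\d_t}=d\imath_{\d_t}+\imath_{\d_t}d$ with the flow identity $\tfrac{d}{dt}\varsigma_t^\ast=\varsigma_t^\ast\cL_{\d_t}$ to obtain
\[
\varsigma_1^\ast s(\t^\c,\phi^\c)-\varsigma_0^\ast s(\t^\c,\phi^\c) = \int_0^1 \varsigma_t^\ast\big(d\imath_{\d_t}+\imath_{\d_t}d\big)\,s(\t^\c,\phi^\c)\,dt.
\]
Closedness of $s(\t^\c,\phi^\c)$ kills the $\imath_{\d_t}d$ contribution, and interchanging $d$ with the $t$-integral in the surviving term identifies the right hand side with $dS(\c)$ via \eqref{eqn:stringpotdef}, giving the claimed formula.

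For the second assertion, I would build a smooth homotopy $\Gamma\colon\I\x\I\to\{\text{connective data on }\sE\}$ with $\Gamma(0,\cdot)=\c_0$, $\Gamma(1,\cdot)=\c_1$, while $\Gamma(\cdot,0)$ and $\Gamma(\cdot,1)$ are the constant paths at the common endpoints $(\t_0,\phi_0)$ and $(\t_1,\phi_1)$. This exists because the space of pairs (module connection, Higgs field) on a fixed $\sE$ is affine. The resulting family produces a closed string form $\omega$ on $M\x\I\x\I$. Applying the argument of the previous paragraph in the $s$-direction gives a primitive $\eta$ on $M\x\I$ with $d\eta = s(\t^{\c_1},\phi^{\c_1}) - s(\t^{\c_0},\phi^{\c_0})$; applying it again in the $t$-direction and using the vanishing of $\varsigma_{t=0}^\ast\eta$ and $\varsigma_{t=1}^\ast\eta$ yields that $S(\c_0)-S(\c_1)$ equals $d$ of the iterated fibre integral of $\omega$ over $\I\x\I$. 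The boundary pieces at $t=0,1$ vanish because along a constant path the string form is the pullback of a form from $M$, hence annihilated by $\imath_{\d_s}$.

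The main obstacle is less conceptual than bookkeeping: one must track signs carefully in the two-variable Stokes computation and check that constant paths produce $\imath_{\d_s}$-trivial string forms. Both points reduce to the closedness of $s$ together with Cartan's formula, so the entire proof is really a two-step application of a single homological identity.
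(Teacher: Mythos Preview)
Your proposal is correct and follows essentially the same approach as the paper: both arguments use closedness of the string form together with Cartan's formula for the first assertion, and a two-parameter homotopy $\Gamma$ (with constant boundary paths) plus the same Stokes/Cartan manipulation for the second. The paper carries out the second part slightly more directly---writing $S(\c_1)-S(\c_0)$ as $d$ of a single double fibre-integral $\int_{\I^2}\varsigma_{(t,s)}^\ast\big(\imath_{\d_s}\imath_{\d_t}s(\t^\Gamma,\phi^\Gamma)\big)$ rather than iterating the one-variable argument---but this is a cosmetic difference, not a substantive one.
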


\begin{proof}
Using Cartan's formula ($ \cL= d\imath + \imath d$), Stokes' theorem and the fact that string forms are closed and natural we have the first result.

Next let $\gamma_0$ and $\gamma_1$ be two paths with the same endpoints and fix a smooth path $\Gamma$  between them, so that $\Gamma(t,i) = \gamma_i(t)$ and $\Gamma(i,s)=\overline \gamma_i$ is a constant map for $i=0,1$. 
As before, $\Gamma$ determines a module connection $\t^\Gamma$ on $\sE\x\I^2$ that vanishes on vectors vertical for $M\x\I^2\to M$ and a Higgs field $\phi^\Gamma$ on $\sE\x\I^2$.
We denote by $\varsigma_{(t,s)}$ the slice map $x\mapsto (x, t,s)$.
A similar calculation  to the first part then shows
$$
S(\gamma_1) - S(\gamma_0) = d \int_{\I^2}  \varsigma_{(t,s)}^\ast \big(  \imath_{\d_s} \imath_{\d_t} s(\t^\Gamma,\phi^\Gamma)\big),
$$
which completes the proof.
\end{proof}

We can give an explicit expression for the string potential as follows.
Fixing a smooth path $\c$, write
\[
\c'(t) = (\t'_t,\phi'_t)  \in \Omega^1(M,\End_{L\CC}(\sE)) \x L\mathfrak{gl}(n)
\]
where $n= \rk\sE$.
Let $\sR^\c$ denote the curvature form of $\t^\c$ on $M\x\I$ and $\sR_t$ the curvature of $\t_t$ on $M$. Write $\t^\c\phi^\c$ and $\t_t\phi_t$ for the Higgs field covariant derivatives of $\phi^\c$ and $\phi_t$ on $M\x\I$ and $M$ respectively.
For any vector field $X$ on $M\x\I$ tangential to the slice through $t$, we note that
\[
\sR^\c(\d_t,X) = \t^\c_{\d_t} \t^\c_{X} - \t^\c_{X} \t^\c_{\d_t} = \frac{d}{dt} \t^\c_X
\]
and
\[
\t^\c\phi^\c(\d_t) = \t^\c_{\d_t}\circ \phi^\c - \phi^\c\circ \t^\c_{\d_t} = \t^\c_{\d_t} \circ\phi^\c = \frac{d}{dt} \phi^\c .
\]
Thus it follows that   $\varsigma_t^\ast\imath_{\d_t}\sR^\c = \t'_t$ and $\varsigma_t^\ast\imath_{\d_t}\t^\c\phi^\c = \phi'_t$ and we have 
\begin{proposition}\label{string potential}

The string potential of a smooth path $\c$ of module connections and Higgs fields is given by
\[
S(\c) = \sum_{j=1}^\infty j \int_0^1\int_\sone \bigg[ (j-1) \overline{\tr}_j(\t'_t,\underbrace{\sR_t,\dotsc,\sR_t}_{\text{$j-2$}},\t_t\phi_t) + \overline{\tr}_j(\underbrace{\sR_t,\dotsc,\sR_t}_{\text{$j-1$}},\phi'_t)\bigg]
\]
\end{proposition}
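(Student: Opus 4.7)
The plan is to work directly from the definition $S(\c) = \int_0^1 \varsigma_t^\ast \imath_{\d_t} s(\t^\c, \phi^\c)\,dt$ together with the earlier explicit formula
\[
s(\t^\c,\phi^\c) = \sum_{j\geq 1} \frac{1}{(j-1)!(2\pi i)^j}\int_\sone \tr\bigl(\t^\c\phi^\c \wedge (\sR^\c)^{j-1}\bigr)\,d\theta.
\]
Since $\imath_{\d_t}$ commutes with both fibre integration over $\sone$ and the matrix trace, the computation reduces to applying the graded Leibniz rule inside the trace and then restricting to the slice via $\varsigma_t^\ast$; the identities $\varsigma_t^\ast\imath_{\d_t}\sR^\c = \t'_t$ and $\varsigma_t^\ast\imath_{\d_t}\t^\c\phi^\c = \phi'_t$ noted immediately above the statement provide the slice-level data that must appear in the final formula.

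The Leibniz expansion produces two types of contributions. Contracting with $\t^\c\phi^\c$ yields a single term that pulls back to $\tr(\phi'_t \wedge \sR_t^{j-1})$. Contracting with the $i$-th copy of $\sR^\c$ (for $i=2,\dots,j$) gives, with the Koszul sign $(-1)^1$ from pulling $\imath_{\d_t}$ past the degree-one factor $\t^\c\phi^\c$, a term pulling back to $-\tr(\t_t\phi_t \wedge \sR_t^{i-2}\wedge \t'_t \wedge \sR_t^{j-i})$. I would then invoke the graded cyclicity of the trace---cycling $\t'_t$ (degree one) through the rest (degree $2j-3$) introduces a sign $(-1)^{2j-3} = -1$ that cancels the Koszul sign---to rewrite each such term with $\t'_t$ in first position, so that after reindexing $k = j-i$ the collective contribution of the $j-1$ terms becomes $+\sum_{k=0}^{j-2}\tr(\t'_t \wedge \sR_t^k \wedge \t_t\phi_t \wedge \sR_t^{j-2-k})$.

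The last step is to express these unsymmetrised traces in terms of $\overline{\tr}_j$ as extended tensorially to matrix-valued forms, so that scalar-form wedge factors pull out and the symmetric sum runs only over the Lie-algebra arguments. For the $\phi'_t$ contribution, the $j!$ permutations yield $j!$ copies of one Lie-algebra trace by ordinary cyclicity on $\mathfrak{gl}(n)$, giving $\tfrac{1}{(j-1)!(2\pi i)^j}\tr(\phi'_t \wedge \sR_t^{j-1}) = j\,\overline{\tr}_j(\sR_t,\dots,\sR_t,\phi'_t)$. For the two-degree-one contribution, the symmetric sum over $S_j$ partitions into $j-1$ cyclic orbits indexed by the relative position of $\t'_t$ and $\t_t\phi_t$; each orbit has $j$ equal traces (ordinary cyclicity) and $(j-2)!$ identical $\sR_t$ permutations, producing the identity
\[
\sum_{k=0}^{j-2}\tr\bigl(\t'_t \wedge \sR_t^k \wedge \t_t\phi_t \wedge \sR_t^{j-2-k}\bigr) = j(j-1)(j-1)!(2\pi i)^j\,\overline{\tr}_j(\t'_t,\sR_t,\dots,\sR_t,\t_t\phi_t),
\]
which, divided by the prefactor $(j-1)!(2\pi i)^j$, yields exactly the coefficient $j(j-1)$ in the statement. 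The main obstacle is disentangling the graded Koszul signs governing scalar-form wedge products and the contraction $\imath_{\d_t}$ from the ungraded cyclic invariance on the Lie-algebra side that underlies the $\overline{\tr}_j$ symmetric sum; once these are kept cleanly separate, the combinatorial count is routine.
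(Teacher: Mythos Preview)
Your proof is correct and follows essentially the same approach as the paper: apply $\imath_{\d_t}$ to the string form via the graded Leibniz rule, pull back by $\varsigma_t$ using the two identities $\varsigma_t^\ast\imath_{\d_t}\sR^\c = \t'_t$ and $\varsigma_t^\ast\imath_{\d_t}\t^\c\phi^\c = \phi'_t$, and integrate. The only difference is that the paper starts from the already-symmetrised expression $s(\t,\phi) = \sum_{j\geq 1} j\int_\sone \overline{\tr}_j(\t\phi,\sR,\dots,\sR)$, so the Leibniz expansion together with graded symmetry of $\overline{\tr}_j$ yields the final formula immediately, bypassing the combinatorial conversion from unsymmetrised traces that you carry out at the end.
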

\begin{proof}
This follows readily from the fact that
\[
s(\t,\phi) = \sum_{j=1}^\infty j \int_\sone\overline{\tr}_j(\t\phi,\underbrace{\sR,\dotsc,\sR}_{\text{$j-1$}})
\]
and from Definition \ref{eqn:stringpotdef} of the string potential $S(\c)$, together with the identities we just established above. 
\end{proof}

We conclude this section by recording a useful relationship between the string potential and the Chern--Simons form. It is not difficult to see that paths $\c$ of module connections and Higgs fields on an $\Omega$ vector bundle $\sE \to M$ correspond bijectively to paths $\c^c$ of framed connections on the caloron transform $E$. Then, as is straightforward to verify,
\begin{equation}\label{eqn:stringycs}
S(\c) = \int_0^1 \varsigma_t^\ast \imath_{\d_t} s(\t^\c,\phi^\c) = {\int_\sone}\CS(\c^c).
\end{equation}
The interested reader can also verify that this expression may be obtained directly using \eqref{eqn:caloroncurvature2}.

\subsection{The total string potential} 
The string potential depends on a pair of module connections and Higgs fields and descends to an even form on the base manifold. There exists another secondary invariant for $\Omega$ bundles $\sE$ that we call the \emph{total string potential}, which is associated to a single pair $(\t,\phi)$ and resides on the total space $\sE$. The construction is more lucid in the language of principal bundles and proceeds as follows.

Let $\pi\colon \cF(\sE)\to M$ denote the frame bundle of the $\Omega$ bundle $\sE$ and let $(\sA,\Phi)$ be the connection and Higgs field corresponding to $(\t,\phi)$. Recall that the fibre product $\cF(\sE)\times_M \cF(\sE)$ is canonically trivialised over $\cF(\sE)$ by the diagonal section $\xi\colon e \mapsto (e,e)$. This trivialisation singles out a trivial connection $\sA_\xi$ and trivial Higgs field $\Phi_\xi$ on the fibre product, and thus it is natural to consider the straight line path $\gamma\colon t\mapsto (1-t)(\sA_\xi, \Phi_\xi) + t(\xi^*\sA,\xi^*\Phi)$. The total string potential is defined by the usual string potential of this  line segment, 
\[S(\sA,\Phi) := \xi^\ast S(\gamma) \in \Omega^{2k-2}(\cF(\sE)).\]
Since the curvature and Higgs field covariant derivative of the trivial pair $(\sA_\xi, \Phi_\xi)$ vanish, we conclude from Proposition \ref{prop:exactness} that $dS(\sA,\Phi) = \pi^*s(\sA,\Phi)$.
\begin{proposition} The total string potential on $\cF(\sE)$ with connection $\sA$ and Higgs field $\Phi$ is given by
\begin{multline*}
S(\sA,\Phi) =  \sum_{j=1}^\infty  \int_\sone \Bigg[   \sum_{i=0}^{j-1} c_{i,j}   \overline{\tr}_j\Big(\Phi,  \underbrace{[\sA,\sA],\dotsc,[\sA,\sA]}_{\text{$i$ }},\underbrace{\sF,\dotsc,\sF}_{\text{$j-i-1$ }}  \Big)\\
 +  
 2\sum_{i=1}^{j-1} c_{i,j}   \overline{\tr}_j\Big(i[\sA,\Phi]-(i+j)\nabla\Phi,\sA, \underbrace{[\sA,\sA],\dotsc,[\sA,\sA]}_{\text{$i-1$ }},\underbrace{\sF,\dotsc,\sF}_{\text{$j-i-1$ }} \Big)\Bigg]
\end{multline*}
where the coefficients are
\[
c_{i,j} = \left(-\frac{1}{2}\right)^i\frac{j!(j-1)!}{(j+i)!(j-1-i)!}.
\]
\end{proposition}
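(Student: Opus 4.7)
The plan is to apply the explicit formula of Proposition~\ref{string potential} to the pulled-back straight-line path on $\cF(\sE)$. By construction $S(\sA,\Phi) = \xi^\ast S(\gamma)$, and since the trivialisation is defined by the diagonal section, the trivial pair $(\sA_\xi,\Phi_\xi)$ pulls back along $\xi$ to zero. Consequently $\xi^\ast\gamma$ is the path $t\mapsto (\sA_t,\Phi_t) = (t\sA,\, t\Phi)$ on $\cF(\sE)$, and by naturality of the string potential it suffices to substitute this path into the formula of Proposition~\ref{string potential} (in its principal-bundle formulation of Remark~\ref{remark:piecewisestringchern}, with $\sR \leftrightarrow \sF$ and $\t\phi \leftrightarrow \nabla\Phi$).

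The ingredients of the path are $\sA'_t = \sA$, $\Phi'_t = \Phi$, together with the identities
\begin{align*}
\sF_t &= t\sF + \tfrac{t^2-t}{2}[\sA,\sA], \\
\nabla\Phi_t &= t\nabla\Phi + (t^2-t)[\sA,\Phi],
\end{align*}
obtained from $d\sA = \sF - \tfrac{1}{2}[\sA,\sA]$ and $d\Phi - \partial_\theta\sA = \nabla\Phi - [\sA,\Phi]$. These re-express $\sF_t$ and $\nabla\Phi_t$ purely in terms of the entries appearing in the target formula.

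Substituting into Proposition~\ref{string potential} and expanding every symmetrised trace by multilinearity, the full symmetry of $\overline{\tr}_j$ reduces the binomial expansion of the $j-2$ or $j-1$ copies of $\sF_t$, combined with the two-term split of $\nabla\Phi_t$, to coefficients of the form $t^{a}(t^2-t)^{b}/2^{b}$ multiplying a standard trace. Every $t$-integral is a beta integral
\[
\int_0^1 t^{j-1}(1-t)^k\,dt \;=\; \frac{(j-1)!\,k!}{(j+k)!},
\]
producing the factorials appearing in $c_{i,j}$. I collect the resulting terms by the integer $i$ recording the total number of $[\sA,\sA]$-type entries in each trace. The second summand of Proposition~\ref{string potential} (containing $\Phi$) yields, with $i=k$, the first inner sum of the statement with coefficient $c_{i,j}$. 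The first summand (containing $\sA$ and $\nabla\Phi_t$) splits into two families via the two pieces of $\nabla\Phi_t$: after the shift $i=k+1$, the $t\nabla\Phi$ contribution accumulates with coefficient $-2(i+j)c_{i,j}$ against the trace with $\nabla\Phi$ first, then $\sA$, then $i-1$ copies of $[\sA,\sA]$ and $j-1-i$ copies of $\sF$, while the $(t^2-t)[\sA,\Phi]$ contribution gives coefficient $2i\,c_{i,j}$ against the analogous trace with $[\sA,\Phi]$ replacing $\nabla\Phi$. These combine into the second inner sum.

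The main obstacle is the combinatorial bookkeeping: one must verify that the binomial factors $\binom{j-2}{k}$ and $\binom{j-1}{k}$, the signs $(-1/2)^k$, the beta-integral values, and the $j$ or $j(j-1)$ prefactors collapse exactly into the stated closed form $c_{i,j} = (-1/2)^i\,j!(j-1)!/[(j+i)!(j-1-i)!]$. The only non-obvious cancellation is the absorption of the factor $(i+j)$ via $(i+j)/(i+j)! = 1/(i+j-1)!$ when matching the $\nabla\Phi$ contribution to $-2(i+j)c_{i,j}$; the remaining identities reduce to $j(j-1)(j-2)! = j!$.
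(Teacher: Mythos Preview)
Your proposal is correct and follows essentially the same approach as the paper: reduce via the diagonal section to the path $t\mapsto(t\sA,t\Phi)$, compute $\sF_t$ and $\nabla\Phi_t$ exactly as you do, substitute into Proposition~\ref{string potential}, binomially expand, and evaluate the resulting beta integrals $\int_0^1 t^{j-1}(1-t)^i\,dt = (j-1)!\,i!/(j+i)!$ to obtain the coefficients $c_{i,j}$. The paper's proof is terser but identical in substance; your additional bookkeeping on how the two summands of Proposition~\ref{string potential} produce the two inner sums is accurate.
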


\begin{proof} The result follows by direct calculation using the expression in Proposition \ref{string potential}. First we note that pulling back the string potential along the diagonal section $\xi$ eliminates the trivial pair $(\sA_\xi, \Phi_\xi)$, so we might as well compute with the path $t \mapsto t(\sA,\Phi)$. This gives $(\sA'_t, \Phi'_t) = (\sA, \Phi)$ and the associated curvature form  $\sF_t =  t\sF -\tfrac{t}{2}(1-t) [\sA,\sA]$ and Higgs field covariant derivative $\nabla\Phi_t = t\nabla \Phi - t(1-t)[\sA,\Phi]$. Inserting these into the formula for the string potential, applying the binomial expansion and integrating over the interval using $\int_0^1 t^{j-1}(1-t)^i dt= \frac{i!(j-1)!}{(j+i)!}$
we obtain the desired formula.
\end{proof}

\begin{corollary} \label{omega generators}
The total string potential restricted to any fibre $\cF(\sE)_m \cong \Omega GL(n)$ is given by
\[ \widehat \tau :=  -2 \sum_{j=1}^\infty c_{j-2,j} \int_\sone\overline{\tr}_j\Big(\gamma^{-1}\partial \gamma, \underbrace{[\Theta,\Theta],\dotsc,[\Theta,\Theta]}_{\text{$j-1$ }}  \Big), \]
where $\gamma \in \Omega GL(n)$ and $\Theta$ is the Maurer--Cartan form on $\Omega GL(n)$. The components of $\widehat \tau$ are the primitive generators for the cohomology ring $H^\bullet(\Omega GL(n),\RR)$.
\end{corollary}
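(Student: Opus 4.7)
The strategy is to specialise the explicit formula for $S(\sA,\Phi)$ in the preceding Proposition to the fibre $\cF(\sE)_m$ and exploit the Maurer-Cartan structure of the loop group $\Omega GL(n)$.

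First I would identify $\cF(\sE)_m \cong \Omega GL(n)$ by choosing a reference frame $q_0$ with $\Phi(q_0) = 0$ (always achievable by modifying $\Phi$ via partitions of unity, or by working universally on the path fibration $PGL(n) \to GL(n)$). The twisted equivariance \eqref{eqn:twistedeq} then gives $\Phi(q_0\gamma) = \gamma^{-1}\partial\gamma$, while $\sA$ pulls back to the Maurer-Cartan form $\Theta$; the Maurer-Cartan equation forces $\sF|_{\text{fibre}} = 0$, and differentiating $\gamma^{-1}\partial\gamma$ along $\Omega GL(n)$ yields $d\Phi = [\Phi,\Theta] + \partial\Theta$, which combined with the definition of $\nabla\Phi$ gives $\nabla\Phi|_{\text{fibre}} = 0$.

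With these vanishings, every summand in the Proposition's formula with $j - i - 1 > 0$ dies and only the $i = j-1$ terms survive, leaving the fibre restriction
\[
\sum_j \int_\sone\!\Big[c_{j-1,j}\,\overline{\tr}_j(\Phi, [\Theta,\Theta]^{j-1}) + 2(j-1) c_{j-1,j}\,\overline{\tr}_j([\Theta,\Phi], \Theta, [\Theta,\Theta]^{j-2})\Big].
\]
To rewrite the second piece in terms of the first I would apply Leibniz to $\overline{\tr}_j(\Phi,\Theta,[\Theta,\Theta]^{j-2})$ and invoke $d\Theta = -\tfrac{1}{2}[\Theta,\Theta]$, Jacobi's identity (implying $d[\Theta,\Theta] = 0$), and the formula for $d\Phi$ above to obtain an identity expressing $\overline{\tr}_j([\Theta,\Phi],\Theta,[\Theta,\Theta]^{j-2})$ as $-\tfrac{1}{2}\overline{\tr}_j(\Phi,[\Theta,\Theta]^{j-1})$ plus an $\overline{\tr}_j(\partial\Theta,\Theta,[\Theta,\Theta]^{j-2})$ correction, modulo an exact form. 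A further round of integration by parts on $\sone$ together with the infinitesimal $\Ad$-invariance of $\overline{\tr}_j$ should eliminate this correction (again modulo exact) and produce the announced coefficient $-2c_{j-2,j}$.

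For the second assertion, note that $dS(\sA,\Phi) = \pi^\ast s(\sA,\Phi)$ by construction, so $\widehat\tau$ represents the transgression of the string class in the Serre spectral sequence of the path-loop fibration $\Omega GL(n) \hookrightarrow PGL(n) \to GL(n)$. By Theorem \ref{theorem:stringchern} this class is the odd Chern character on $GL(n)$, whose components are the classical primitive generators of $H^\bullet(GL(n),\RR)$; since transgression in the path-loop fibration preserves primitivity, the components of $\widehat\tau$ are the primitive generators of $H^\bullet(\Omega GL(n),\RR)$. The principal obstacle is the combinatorial identity underlying the third paragraph: extracting the factor $-2c_{j-2,j}$ from the $c_{j-1,j}$-weighted surviving terms demands a delicate interplay of graded symmetry of $\overline{\tr}_j$, the Maurer-Cartan identities, and integration by parts on the circle, all working in concert.
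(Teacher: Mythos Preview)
Your overall strategy matches the paper's: restrict to the fibre, observe that $\iota^\ast\sA=\Theta$, $\iota^\ast\Phi=\gamma^{-1}\partial\gamma$, $\sF=0$ and $\nabla\Phi=0$ there, so only the $i=j-1$ terms survive in the Proposition's formula. Your identification of the surviving expression is correct, and your treatment of the second assertion via transgression in the path--loop fibration is essentially the paper's argument as well.

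Where you diverge is in the reduction of the two surviving terms to a single one. You propose a route through Leibniz, the Maurer--Cartan equation, integration by parts on $\sone$, and so on, and flag this as the ``principal obstacle''. In fact it is a one-line application of $\ad$-invariance of the symmetric polynomial $\overline{\tr}_j$. Applying the invariance identity $\sum_k\overline{\tr}_j(\xi_1,\dots,[\Theta,\xi_k],\dots,\xi_j)=0$ to the tuple $(\Phi,\Theta,[\Theta,\Theta],\dots,[\Theta,\Theta])$ and using the Jacobi identity $[\Theta,[\Theta,\Theta]]=0$ gives immediately
\[
\overline{\tr}_j\big([\Theta,\Phi],\Theta,[\Theta,\Theta]^{j-2}\big)=-\,\overline{\tr}_j\big(\Phi,[\Theta,\Theta]^{j-1}\big),
\]
which collapses your two-term expression into a single $\overline{\tr}_j(\Phi,[\Theta,\Theta]^{j-1})$ term; the elementary coefficient relation $(2j-1)c_{j-1,j}=-2c_{j-2,j}$ then produces the stated $-2c_{j-2,j}$. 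No exterior differentiation, no $\partial\Theta$ correction, and no circle integration by parts is needed.

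One small comment on your setup: arranging $\Phi(q_0)=0$ by a partition-of-unity modification of $\Phi$ would change the total string potential itself, so that manoeuvre is not legitimate. Your alternative of working on the universal path fibration (where the canonical Higgs field does vanish at the constant path) is the right way to normalise, and is in effect what the paper does when it writes $(\iota^\ast\Phi)(\gamma)=\gamma^{-1}\partial\gamma$.
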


\begin{proof} Let $\iota\colon \Omega GL(n) \to \cF(\sE)_m$ denote the isomorphism with the fibre over $m\in M$ and note that $\iota^* \sA = \Theta$ and $(\iota^* \Phi) (\gamma)= \gamma^{-1}\partial \gamma$. In particular, the restriction of the curvature  $\sF$ and Higgs field covariant derivative $\nabla \Phi$ to a fibre vanish, so the total string potential simplifies to 
\[
\iota^*S(\sA,\Phi) 
=- \sum_{j=1}^\infty 2c_{j-2,j} \int_\sone\overline{\tr}_j\left(\gamma^{-1}\partial \gamma , [\Theta,\Theta]^{j-1} \right),
\]
using ad-invariance of the $\overline{\tr}_j$ and the relation $(2j-1)c_{j-1,j} = -2c_{j-2,j}$. The components of this even form coincide with the transgression of the generators of $H^\bullet(GL(n),\RR)$ and it is well-known \cite[Appendix 4.11]{PS} that the latter generate the polynomial algebra $H^\bullet(\Omega GL(n), \RR)$.
\end{proof}

\begin{example}
In \cite{MS}, the authors consider the lifting bundle gerbe associated to the standard central extension 
\[1 \to S^1 \to \widehat{\Omega G} \to \Omega G \to 1 ,\]
where $G$ is a compact Lie group with a normalised Killing form $ \langle \cdot,\cdot\rangle$. They give a formula for the bundle gerbe curving 
\[
B = \frac{1}{2\pi i} \int_\sone  \langle\sF,\Phi\rangle - \tfrac{1}{2}\langle \sA,\d\sA\rangle ,
\]
which is a 2-form on the total space of an $\Omega G$-bundle $\sQ \to M$ equipped with a connection $\sA$ and Higgs field $\Phi$, and whose differential descends to a representative of the string class $s_3(\sA,\Phi) \in H^3(M,\ZZ)$. Computing the degree two component of the total string potential for $G=U(n)$  with $\langle \cdot,\cdot\rangle = - 8\pi^2 \overline{\tr}_2(\cdot,\cdot)$, we have
\begin{align*}
S_2(\sA,\Phi) &= -\frac{1}{8\pi^2} \int_\sone  \langle \Phi,\sF\rangle -\tfrac{1}{6} \langle \Phi,[\sA,\sA] \rangle - \tfrac{1}{3} \langle \sA,[\sA,\Phi] \rangle + \langle \sA,\nabla\Phi \rangle  \\
&= -\frac{1}{8\pi^2} \int_\sone 2   \langle \Phi,\sF\rangle - \langle \sA,\d\sA \rangle  - \langle A, d\Phi \rangle + \langle \Phi,d\sA \rangle \\
&=  \frac{1}{2\pi i} B + d \Big( \frac{1}{4\pi^2} \int_\sone \langle\sA,\Phi\rangle \Big).
\end{align*}
Thus, the total string potential recovers the curving of the lifting bundle gerbe up to an exact form. 
\end{example}

The total string potential is the odd analogue of the \emph{total} Chern--Simons form for a principal $GL(n)$-bundle $Q$ with connection $A$ with curvature $F$, defined as \cite[(3.5)]{CS}
\[CS(A) =  \sum_{j=1}^\infty  \sum_{i=0}^{j-1} c_{i,j}   \overline{\tr}_j\Big(A,\underbrace{[A,A],\dotsc,[A,A]}_{\text{$i$ }},\underbrace{F,\dotsc,F}_{\text{$j-i-1$ }} \Big) \! .\]
When $Q= \cF(E)$ is the caloron transform of the frame bundle $\cF(\sE)$ and the framed connection $A$ is determined by $\sA$ and $\Phi$, we have the relationship
\begin{equation}
\label{eqn:stringytotcs}
S(\sA,\Phi) = {\int_\sone}\CS(A) .
\end{equation}
This is the analogue of equation (\ref{eqn:stringycs}), although the interpretation is more subtle since $\CS(A)$ resides on the total space of $\cF(E)$ and integration over the circle is not straightforward in this case. However, recall that $A$ as defined in \eqref{eqn:finiteconnconstruct}  lifts to a basic form on $\cF(\sE)\times S^1\times GL(n) $. The connection $A$ appearing on the right hand side in (\ref{eqn:stringytotcs}) should therefore be viewed as the pullback of this basic form by the projection map onto  $\cF(\sE) \times S^1$, after which the $S^1$-integration makes sense.
 
In \cite[Proposition 3.15]{CS} it was shown that $CS(A)$, reduced mod $\ZZ$, defines an even  differential character on $M$. Indeed, this was one of the main motivations of the Cheeger--Simons model for differential cohomology $\check H^\bullet(M)$. By  \eqref{eqn:stringytotcs}, it follows that the mod $\ZZ$ reduction of the total string potential determines an odd differential character on $M$; the differential refinement of the string form.

\section{The $\Omega$ model for odd differential $K$-theory}\label{S:model}
The primary reason for dealing with $\Omega$ bundles is that they give convenient objects with which to describe odd $K$-theory.
Despite being extravagant in dimensions, working with $\Omega$ vector bundles is useful  since it allows one to phrase odd $K$-theory of the compact manifold $M$ entirely in terms of \emph{smooth} bundles based over $M$ and gives a bundle-theoretic interpretation of the identity $K^{-1}(M) = [M,B\Omega GL]$.
More importantly, by using module connections and Higgs fields we obtain a natural refinement to differential $K$-theory. 
The construction uses the string potential form in a role analogous to that played by the Chern--Simons form in the Simons--Sullivan construction of even differential $K$-theory \cite{SSvec}.

\subsection{Differential extensions}
First let us recall the framework of differential extensions due to Bunke--Schick \cite{BS3}, specialising to the case of complex $K$-theory.
Denoting by
\[
ch\colon K^\bullet(M) \lo H^\bullet(M;\CC)
\]
the Chern character of topological $K$-theory, we have the following
\begin{definition}
A \emph{differential extension of $K$-theory} is a contravariant functor $\check K^\bullet$ from the category of compact manifolds (possibly with corners) to $\ZZ_2$-graded abelian groups together with natural transformations
\begin{enumerate}
\item
$\Ch \colon \check K^\bullet (M) \to \Omega^\bullet_{d=0}(M;\CC)$ (the \emph{curvature});

\item
$I \colon \check K^\bullet(M) \to K^\bullet (M)$ (the \emph{underlying class}); and

\item
$a \colon \Omega^{\bullet-1}(M;\CC) /\im d \to \check K^\bullet(M)$  (the \emph{action of forms})
\end{enumerate}
such that
\begin{enumerate}
\item
the diagram
\[
\xy
(0,15)*+{\check K^\bullet(M)}="1";
(25,30)*+{K^\bullet(M)}="2";
(25,0)*+{\Omega^\bullet_{d=0}(M;\CC)}="3";
(50,15)*+{H^\bullet(M;\RR)}="4";
{\ar^{I} "1";"2"};
{\ar^{ch} "2";"4"};
{\ar^{\Ch} "1";"3"};
{\ar^{\deR} "3";"4"};
\endxy
\]
commutes, with $\deR$ the map induced by the de Rham isomorphism;
\item
$\Ch\circ a = d$, the exterior derivative; and
\item
the sequence
\[
K^{\bullet-1}(M) \xrightarrow{\;ch\;} \Omega^{\bullet-1}(M;\CC) /\im\,d  \xrightarrow{\;\;a\;\;} \check K^\bullet(M)  \xrightarrow{\;\;I\;\;} K^\bullet(M) \lo 0
\]
is exact.
\end{enumerate}
We denote the data of such a differential extension succinctly as the quadruple $(\check K^\bullet,\Ch,I,a)$.
\end{definition}


\begin{example}
The Simons--Sullivan model for even differential $K$-theory is the Grothendieck group completion of isomorphism classes of \emph{structured vector bundles}; that is vector bundles equipped with an equivalence class of connections defined by Chern--Simons exactness.
More precisely, if $E \to M$ is a smooth vector bundle we say that connections $\nabla_0$ and $\nabla_1$ for $E$ are \emph{equivalent} if there is a smooth path of connections $\c$ from $\nabla_0$ to $\nabla_1$ such that the Chern--Simons form of \eqref{eqn:chernsimons} is exact.
This defines an equivalence relation on the space of all connections on $E$, and a vector bundle $E \to M$ equipped with such an equivalence class $[\nabla]$ is a \emph{structured (vector) bundle over $M$}.

A cocycle in the Simons--Sullivan model of even differential $K$-theory, the underlying group of which we denote by $\edK$, is given by a virtual difference $\bm{E} - \bm{F}$ of structured vector bundles.
In fact, every element of $\edK$ may be written in the form $\bm{E} - \ul{\bm{n}}$ where $\ul{\bm{n}} = (\ul{\CC}^n,[d])$ is the trivial structured bundle of rank $n$.
As a differential extension, $\edK$ of course comes equipped with curvature, underlying class and action of forms maps $\check{ch}, \check I$ and $\check{a}$ respectively.
Briefly, $\check{ch}$ is given by computing the Chern character forms of \eqref{eqn:cherncharacterform}, $\check I$ is given by discarding the connective structure and $\check a$ may be characterised in a way completely analogous to the remark following Theorem \ref{odd diff K}.
The interested reader is referred to \cite{SSvec} for details.\end{example}

In \cite{BS2} it is shown that any two differential extensions of the even part of $K$-theory are isomorphic; in particular $\edK$ defines even differential $K$-theory.
We note that an \emph{isomorphism} of differential extensions is a natural isomorphism of the underlying functors that preserves the  curvature, underlying class and action of forms maps.
As clarified by Bunke--Schick, the axioms above do not uniquely determine differential extensions of \emph{odd} $K$-theory.
Indeed, to obtain uniqueness in odd degree we require either a multiplicative structure or an $\sone$-integration map:
\begin{definition}
\label{defn:intextension}
A differential extension $(\check K^\bullet,\Ch,I,a)$ of $K$-theory  has \emph{$\sone$-integration} if there is a natural transformation
\[
{\int_\sone} \colon \check K^\bullet (M\x \sone) \lo \check K^{\bullet-1}(M)
\]
compatible with the natural transformations $\Ch$ and $I$ and the $\sone$-integration maps on differential forms and on $K$.
If $\pr \colon M\x\sone \to M$ is the projection, we also require
\begin{enumerate}
\item
${\int_\sone} \pr^\ast x = 0$ for each $x \in \check K^\bullet(M)$; and

\item
${\int_\sone}(\id_M \x t)^\ast x = -{\int_\sone} x$ for all $x \in \check K^\bullet(M\x\sone)$, with $t \colon \sone\to\sone$ the (orientation-reversing) map given by complex conjugation. 
\end{enumerate}
An isomorphism of differential extensions with $\sone$-integration is also required to preserve the $\sone$-integration maps.
\end{definition}

\subsection{Structured $\Omega$ vector bundles}
By analogy with the structured vector bundles of Simons--Sullivan, we now introduce \emph{structured $\Omega$ vector bundles} as the basic ingredient underlying our model for odd differential $K$-theory.

Let $\sE \to M$ be an $\Omega$ vector bundle equipped with module connection $\triangle$ and Higgs field $\phi$. As established in Theorem \ref{theorem:stringchern}, the odd Chern character of $\sE$ is represented in terms of $\triangle$ and $\phi$ via the  string form and, using the normalised symmetrised traces (\ref{traces}), the string form can be written more elegantly as
\[
s(\triangle,\phi) = \sum_{j=1}^\infty j \int_\sone\overline{\tr}_j(\triangle \phi, \underbrace{\sR,\dotsc,\sR}_{\text{$j-1$ }}).
\]
Similarly, for any smooth path $\c(t) =  (\triangle_t,\phi_t)$ of module connections and Higgs fields on $\sE$, the string potential becomes
\[
S(\gamma) = \sum_{j=1}^\infty j \int_0^1\int_\sone \bigg[ (j-1) \overline{\tr}_j(\triangle'_t,\underbrace{\sR_t,\dotsc,\sR_t}_{\text{$j-2$ }},\triangle \phi_t) + \overline{\tr}_j(\underbrace{\sR_t,\dotsc,\sR_t}_{\text{$j-1$ }},\phi'_t)\bigg] .
\]

Note that there is a smooth path $\c$ connecting any pair $(\triangle_0,\phi_0)$ on $\sE$ to any other pair $(\triangle_1\,,\phi_1)$  and by Proposition \ref{prop:exactness} a different choice of path with the same endpoints amounts to a shift by an exact form.
We thus have a well defined map
\[
\cS(\triangle_0,\phi_0;\triangle_1,\phi_1) := S(\c) \mod\mbox{exact}
\]
which satisfies the transitivity relation
\[
\cS(\triangle_0,\phi_0;\triangle_2,\phi_2) = \cS(\triangle_0,\phi_0;\triangle_1,\phi_1) + \cS(\triangle_1,\phi_1;\triangle_2,\phi_2).
\]
This induces an equivalence relation on the space of module connections and Higgs fields by
\[
(\triangle_0,\phi_0) \sim (\triangle_1,\phi_1) \ \mbox{if and only if} \  \cS(\triangle_0,\phi_0;\triangle_1,\phi_1) = 0\mod\mbox{exact},
\]
and we call an equivalence class $[\t,\phi]$ a \emph{string datum} on $\sE$.
\begin{definition}
A \emph{structured $\Omega$ vector bundle} is a pair $\bm{\sE} = (\sE,[\triangle,\phi])$ where $\sE \to M$ is an $\Omega$ vector bundle and $[\triangle,\phi]$ is a string datum on $\sE$.
\end{definition}

By naturality of string potentials and string forms, for a smooth map $f \colon N \to M$ we may define the pull back of a structured $\Omega$ vector bundle $\bm{\sE} = (\sE,[\triangle,\phi])$ on $M$  by $f^\ast\bm{\sE}:= (f^\ast\sE,f^\ast[\triangle,\phi])$ where $f^\ast[\t,\phi] := [f^\ast\t,f^\ast\phi]$. 
There is an obvious notion of \emph{isomorphism} of structured $\Omega$ vector bundles. 
It is easy to verify that the direct sum operation extends to string data, so setting
\[
\bm{\sE} \oplus \bm{\sF} := (\sE\oplus\sF,[\triangle\oplus\tilde\triangle,\phi\oplus\tilde\phi])
\]
gives a well defined operation on structured $\Omega$  bundles.

It is important for our purposes to understand how structured $\Omega$ vector bundles behave under smooth homotopies.
Suppose that $f_t \colon N \to M$ is a family of smooth maps depending smoothly on the parameter $t \in\I$ and we are given an $\Omega$ vector bundle $\sE \to M$ equipped with module connection $\triangle$ and Higgs field $\phi$.
By taking the caloron transform and using parallel transport along the family of curves $\rho_{x,\theta} \colon \I \to M \x \sone$, $\rho_{x,\theta}(t) := (f_t(x),\theta)$, we obtain isomorphisms $f_0^\ast\sE \cong f_t^\ast\sE$ for each $t\in\I$.
Abusing notation slightly by omitting explicit reference to these isomorphisms and inserting into \eqref{eqn:stringpotdef} we get
\begin{equation}
\label{eqn:homstringform}
\cS(f_0^\ast\triangle,f_0^\ast\phi;f_1^\ast\triangle,f_1^\ast\phi) = \int_0^1 f_t^\ast \imath_{\dot\rho_x(t)} s(\triangle,\phi)\,dt \mod\mbox{exact},
\end{equation}
where $\dot\rho_x(t)$ is the tangent to the curve $\rho_x(t) := f_t(x)$ at $t\in\I$.

\begin{definition}
Let  $\struct(M)$ denote the set of isomorphism classes of structured $\Omega$ vector bundles over $M$. 
Direct sum makes $\struct(M)$ an abelian semigroup and the assignment $\struct\colon M\mapsto \struct(M)$ defines a contravariant functor.

\end{definition}
\begin{remark}
We shall usually denote an element of $\struct(M)$ by $\bm{\sE}$, rather than the technically correct $[\bm{\sE}]$, to avoid an excess of notation.
\end{remark}
\begin{example}
The \emph{trivial structured $\Omega$ bundle of rank $n$} is $\ul{\bm{\sL\CC}}^n := (\ul{L\CC}^n,[\de,\d])$.
\end{example}

\begin{example}
Recall the bundles $\sE(n) \to GL(n)$ equipped with the Higgs fields $\phi(n)$ and module connections $\t_\alpha(n)$.
As remarked in Example \ref{example:universalmodconn}, the connection $\t_\alpha(n)$ depends on the choice of a smooth funtion $\alpha \colon \RR \to \I$.
At the level of string data, however, we claim that $[\t_\alpha(n),\phi(n)]$ is independent of the choice of $\alpha$.

To see this, take any two choices of $\alpha_0$, $\alpha_1$ of such smooth functions and consider the line segment $\c$ from $(\t_{\a_0}(n),\phi(n))$ to $(\t_{\a_0}(n),\phi(n))$.
The module connection determined by a point $s$ in this line segment then corresponds to the $\Omega GL(n)$-connection
\[
\widehat \sA = \Theta - \big( s\a_1 + (1-s)\a_0 \big)  \ad(p^{-1}) \pi^\ast\widehat\Theta
\]
on $PGL(n) \x\I$, where $s \in \I$.
Writing $\widehat\a = s\a_1 + (1-s)\a_0$, a simple calculation gives that the curvature of $\widehat\sA$ is
\[
\sF = \tfrac{1}{2} \big(\widehat\a^2-\widehat\a\big)\ad(p^{-1})\big[\pi^\ast\widehat\Theta,\pi^\ast\widehat\Theta \big].
\]
The Higgs field $\phi(n)$ is determined by the canonical Higgs field $\Phi$ on $PGL(n)$ (as in Example \ref{example:universalHF}), which satisfies
\[
\nabla\Phi = \d\widehat\alpha\ad(p^{-1})\pi^\ast\widehat\Theta
\]
on $PGL(n)\x\I$.
The string form associated to this data is then
\[
s(\t^\c,\phi^\c)= \sum_{j=1}^\infty \bigg(\!\!-\frac{1}{2}\bigg)^{j-1} \frac{j!(j-1)!}{(2j-1)!} \,\overline{\tr}_j\big(\Theta,\underbrace{[\Theta,\Theta],\dots,[\Theta,\Theta]}_{\text{$j-1$ }} \big).
\]
Notice that this has no component in the $\I$ direction, so by \eqref{eqn:stringpotdef} the string potential  is $S(\c) = 0$.
We call $\bm{\sE}(n) = (\sE(n),[\t_\a(n),\phi(n)])$ the \emph{canonical} structured $\Omega$ bundle of rank $n$.
In light of this fact, we will usually not specify a choice of $\a$ when referring to $\bm{\sE}(n)$. 
\end{example}

The following sub-semigroups of $\struct(M)$ are important in the sequel\\
\begin{itemize}
\item
 $\struct_0(M) := \{(\sE,[\triangle,\phi]) \in\struct(M) \mid \text{$\sE$ is trivial} \}$ is the semigroup of \emph{topologically trivial} structured $\Omega$ vector bundles;\\

\item
 $\struct_{cl}(M) := \{ g^\ast\bm{\sE}(n) \mid \text{$g \colon M \to GL(n)$ is smooth for some $n$} \}$ is the semigroup of structured $\Omega$ vector bundles that are \emph{classified} by the $\bm{\sE}(n)$;\\

\item
 $\struct_T(M) := \{(\sE,[\triangle,\phi]) \in\struct(M) \mid \text{$\sE \oplus \ul{L\CC}^n$ is trivial for some $n$} \}$ is the semigroup of \emph{stably trivial} structured $\Omega$ vector bundles; \\

\item
 $\struct_F(M) := \{\bm{\sE} \in\struct(M) \mid \text{$\bm{\sE}\oplus \ul{\bm{\sL\CC}}^n$ is trivial for some $n$} \}$ is the semigroup of \emph{stably flat} structured $\Omega$ vector bundles.\\
\end{itemize}
Note that $\struct_F(M)$ and $\struct_0(M)$ are both sub-semigroups of $\struct_T(M)$ and all  assignments $M \mapsto \struct_\ast (M)$ are functorial.

Before defining the $\Omega$ model for odd differential $K$-theory, we must verify that every element of $\struct(M)$ has an \emph{inverse}, that is for each $\bm{\sE}$ there is some $\bm{\sF}$ such that $\bm{\sE} \oplus\bm{\sF}  \cong \ul{\bm{\sL\CC}}^n$.
We shall do this in two steps by first showing explicitly that every element of $\struct_{cl}(M)$ has an inverse and then demonstrating that modulo $\struct_0(M)$ every element of $\struct(M)$ lives in $\struct_{cl}(M)$.
The intuition here is that we can \lq\lq cancel off'' the geometric data on $\bm{\sE}$ by a topologically trivial structured $\Omega$ vector bundle to obtain a pullback of $\bm{\sE}(n)$, for which we have explicit inverses.

\begin{lemma}
\label{lemma:classinverse}
Each element of $\struct_{cl}(M)$ has an inverse in $\struct_{cl}(M)$.
\end{lemma}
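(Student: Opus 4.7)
Let $\bm{\sE} = g^*\bm{\sE}(n) \in \struct_{cl}(M)$ for some smooth $g\colon M \to GL(n)$. The plan is to exhibit an inverse by pulling back the universal structured $\Omega$ bundle along the pointwise inverse of $g$. Set
\[
\bm{\sF} := (g^{-1})^*\bm{\sE}(n) \in \struct_{cl}(M),
\]
where $g^{-1}\colon M \to GL(n)$ denotes $m \mapsto g(m)^{-1}$. First I would establish the natural compatibility $i^*\bm{\sE}(n+m) \cong \bm{\sE}(n) \oplus \bm{\sE}(m)$ for the block-sum inclusion $i\colon GL(n) \times GL(m) \to GL(n+m)$; this follows from the block-additivity at the level of the path fibrations together with the canonical connections and Higgs fields. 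Combined with naturality, it yields $\bm{\sE} \oplus \bm{\sF} \cong (g \oplus g^{-1})^*\bm{\sE}(2n)$. Since the fibre $\sE(2n)_{\id} \cong L\CC^{2n}$ is canonically framed, one has $\ul{\bm{\sL\CC}}^{2n} \cong c_{\id}^*\bm{\sE}(2n)$, so the task reduces to proving $(g\oplus g^{-1})^*\bm{\sE}(2n) \cong c_{\id}^*\bm{\sE}(2n)$ in $\struct(M)$.

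To compare the two pullbacks I would use the classical Whitehead null-homotopy $H\colon M \times I \to GL(2n)$ of the form $H(m,t) = A(m) R(t) A(m)^{-1} R(t)^{-1}$, where $A(m)$ is the block-diagonal matrix $g(m) \oplus 1_n$ and $R(t)$ is the block rotation through angle $(1-t)\pi/2$, so that $H_0 = g \oplus g^{-1}$ and $H_1 = c_{\id}$. Parallel transport in the $I$-direction using $H^*\t_\alpha(2n)$ produces an isomorphism of the underlying $\Omega$ bundles $\sE \oplus \sF \cong \ul{L\CC}^{2n}$. By equation \eqref{eqn:homstringform}, this upgrades to an isomorphism of structured $\Omega$ bundles precisely when the transgression form
\[
\eta := \int_0^1 H_t^*\,\imath_{\dot\rho_x(t)}\, s(\t_\alpha(2n),\phi(2n))\,dt
\]
is exact on $M$.

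The main obstacle is verifying the exactness of $\eta$. A first observation is that $\eta$ is automatically closed: the identity $\iota^*\Ch = -\Ch$ on $GL(n)$ for the inversion map $\iota$, combined with additivity of $s$ under direct sum, gives $H_0^* s = 0$ on the nose, while $H_1^* s = 0$ trivially. To establish exactness I would reduce to the universal case $M = GL(n)$, $g = \id$ by naturality, then expand $H^{-1}dH$ through the Maurer--Cartan product rule. The $dt$-component factorises as $\Ad(R)(A\alpha A^{-1} - \alpha)$ with $\alpha = R^{-1}\dot R$ constant in $m$; substituting into the symmetric traces $\overline{\tr}_k$ and using the relation $B = A^{-1}$ in the remaining $M$-differentials, the $t$-integrated expression should collapse via ad-invariance into an exact form on $GL(n)$. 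Should this direct computation prove intractable, an alternative is to exploit the flexibility in the inverse and replace $g^{-1}$ by a smoothly homotopic map $h\colon M \to GL(n)$ whose pullback absorbs any obstruction into an exact correction; the infinite-dimensional freedom in $h$ against the finite-dimensional cohomological obstruction should allow the required $\bm{\sF} = h^*\bm{\sE}(n)$ in $\struct_{cl}(M)$ to be produced.
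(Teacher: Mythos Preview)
Your approach is exactly that of the paper: take $(g^{-1})^\ast\bm{\sE}(n)$ as the candidate inverse, identify the direct sum with $(g\oplus g^{-1})^\ast\bm{\sE}(2n)$, use a Whitehead-style null-homotopy of $g\oplus g^{-1}$, and apply \eqref{eqn:homstringform} to reduce the problem to showing that the transgression form $\eta$ vanishes modulo exact forms. The paper's homotopy
\[
X_t(x) =
\begin{bmatrix} g(x) & 0 \\ 0 & 1 \end{bmatrix}
\begin{bmatrix} \cos t & -\sin t \\ \sin t & \cos t \end{bmatrix}
\begin{bmatrix} 1 & 0 \\ 0 & g(x)^{-1} \end{bmatrix}
\begin{bmatrix} \cos t & \sin t \\ -\sin t & \cos t \end{bmatrix}
\]
is a minor variant of yours.

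The genuine gap is at the last step. You argue that $\eta$ is closed (correct) and then speculate that the Maurer--Cartan expansion ``should collapse via ad-invariance into an exact form''; your fallback of perturbing $g^{-1}$ to some other $h$ is not a proof either. What you are missing is that for this particular homotopy the integrand is \emph{identically zero}, not merely exact after integration: one has
\[
\tr\big(X_t^{-1}\partial_t X_t\cdot(X_t^{-1}dX_t)^{2j}\big)=0\qquad\text{for all }j\geq 0,
\]
a pointwise algebraic identity (this is \cite[Lemma 3.6]{TWZ}). Substituting into \eqref{eqn:homstringform}, together with the explicit expression \eqref{eqn:canstringforms} for $s(\t_\alpha(2n),\phi(2n))$ in terms of the Maurer--Cartan form, gives $\eta=0$ on the nose. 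So rather than trying to show $\eta$ is exact on $GL(n)$ by cohomological or ad-hoc arguments, you should verify this trace identity directly for the Whitehead homotopy; once you have it the proof is complete.
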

\begin{proof}
Take any $g^\ast\bm{\sE}(n) \in \struct_{cl}(M)$.
We show that $(g^{-1})^\ast\bm{\sE}(n)$ is an inverse to $g^\ast\bm{\sE}(n)$, with $g^{-1} \colon M\to GL(n)$ the map $x \mapsto g(x)^{-1}$. 
To see this, observe that $g^\ast\bm{\sE}(n) \oplus (g^{-1})^\ast\bm{\sE}(n)$ is canonically isomorphic to the pullback of $\bm{\sE}(2n)$ by the block sum map
\[
g\oplus g^{-1} \colon x\longmapsto
\begin{bmatrix}
g(x) & 0\\
0 & g(x)^{-1}\\
\end{bmatrix}
\]
where each entry is an $n\x n$-matrix.
For $t \in [0,\tfrac{\pi}{2}]$ define the map $X_t \colon M \to GL(2n)$ by
\[
x\longmapsto 
\begin{bmatrix}
g(x) & 0\\
0 & 1 \\
\end{bmatrix} 
\begin{bmatrix}
\cos t & -\sin t\\
\sin t & \cos t \\
\end{bmatrix}
\begin{bmatrix}
1 & 0\\
0 & g(x)^{-1}\\
\end{bmatrix}
\begin{bmatrix}
\cos t & \sin t\\
-\sin t & \cos t \\
\end{bmatrix}.
\]
The family of maps $X_t$ gives a smooth homotopy from $g\oplus g^{-1}$ to the constant map $\id\colon x\mapsto \id\in GL(2n)$ and, as  in \cite[Lemma 3.6]{TWZ}, we have
\[
\tr\big( X_t^{-1} \d_t X_t \cdot (X_t^{-1} d X_t)^{2j}\big) = 0
\]
for each $j\geq 0$. Denoting by $\triangle$ and $\phi$ the pullback module connection and Higgs field on $(g\oplus g^{-1})^\ast\bm{\sE}(2n)$ and using \eqref{eqn:homstringform}, we conclude that
\begin{multline*}
\cS(\triangle,\phi;\de,\d) = \int_0^{\tfrac{\pi}{2}} X_t^\ast \imath_{\d_t X_t(x)} \Bigg[ \sum_{j=0}^\infty \frac{-j!}{(2j+1)!}\bigg(-\frac{1}{2\pi i} \bigg)^{j+1}\tr\big(\Theta^{2j+1} \big)\Bigg]dt\\
=  \int_0^{\tfrac{\pi}{2}}  \sum_{j=0}^\infty \frac{-j!}{(2j)!}\bigg(-\frac{1}{2\pi i} \bigg)^{j+1}\tr\big(X_t^{-1}\d_t X_t \cdot (X_t^{-1} dX_t)^{2j} \big) \,dt= 0 \mod\mbox{exact}
\end{multline*}
since $X_t^\ast \imath_{\d_t X_t(x)} \Theta = X_t^{-1}\d_t X_t$ and $X_t^\ast\Theta = X_t^{-1} dX_t$. Thus we have 
\[
g^\ast \bm{\sE}(n) \oplus (g^{-1})^\ast \bm{\sE}(n)  = (g\oplus g^{-1})^\ast\bm{\sE}(2n) = \ul{\bm{\sL\CC}}^{2n}
\]
as required.
\end{proof}

If $\ul{L\CC}^n \to M$ is a trivial $\Omega$ vector bundle, for any choice of module connection $\triangle$ and Higgs field $\phi$ on $\ul{L\CC}^n$ let $\gamma_{\triangle,\phi}$ denote the straight line path from the trivial pair $(\de,\d)$  to the pair $(\triangle,\phi)$.
Define a map
\[
\sS \colon \struct_0(M) \lo \Omega^{even}(M;\CC) /\im d 
\]
by sending $(\sE,[\triangle,\phi]) \longmapsto S(\gamma_{\triangle,\phi}) \!\mod \mbox{exact}$.
It is straightforward to check that $\sS$ is a well defined semigroup homomorphism and determines a natural transformation of functors.
\begin{theorem}\label{surjective}
The homomorphism $\sS$ is surjective.
\end{theorem}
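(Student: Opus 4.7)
The plan is to use the caloron correspondence to reduce surjectivity to a framed refinement of the analogous surjectivity for the Chern--Simons form due to Simons--Sullivan. By equation \eqref{eqn:stringycs}, for any pair $(\triangle,\phi)$ on $\ul{L\CC}^n$ and the straight line path $\gamma$ from $(\de,\d)$ to $(\triangle,\phi)$, we have $S(\gamma) = \int_\sone \CS(\gamma^c)$, where $\gamma^c$ is the corresponding straight line path of framed connections from the trivial connection on the caloron transform $\ul{\CC}^n \to M\x\sone$. Hence
\[
\sS\bigl((\ul{L\CC}^n,[\triangle,\phi])\bigr) \;=\; \int_\sone \CS(\gamma^c) \mod\mathrm{exact},
\]
and surjectivity of $\sS$ is equivalent to showing that every even form on $M$, modulo exact, arises in this way from a framed connection on a trivial framed bundle over $M\x\sone$.

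Given $\omega\in\Omega^{\mathrm{even}}(M;\CC)$, I would fix a smooth $\rho\colon\sone\to\RR$ with $\rho(0)=0$ and $\int_\sone\rho\,d\theta = 2\pi$, and consider
\[
\eta \;:=\; \tfrac{1}{2\pi}\,\rho(\theta)\,\pr_M^*\omega\wedge d\theta \;\in\; \Omega^{\mathrm{odd}}(M\x\sone;\CC).
\]
This satisfies $\int_\sone\eta = \omega$ and, crucially, $\eta|_{M_0}=0$ (since the $d\theta$ factor is weighted by $\rho$ which vanishes at $\theta=0$). The problem reduces to producing a framed connection $\nabla$ on some trivial bundle $\ul{\CC}^n\to M\x\sone$ with $\CS(\gamma^c_\nabla)\equiv\eta\mod\mathrm{exact}$; the inverse caloron transform then yields the required element of $\struct_0(M)$.

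To construct such a framed $\nabla$, apply the surjectivity result of Simons--Sullivan \cite{SSvec} to $M\x\sone$ directly to get an (a priori unframed) connection $\nabla_0$ on $\ul{\CC}^n\to M\x\sone$ with $\CS(\gamma^c_{\nabla_0})\equiv\eta\mod\mathrm{exact}$. Since $\eta|_{M_0}=0$, the restriction $\CS(\gamma^c_{\nabla_0})|_{M_0}$ is exact on $M_0$, so $\nabla_0|_{M_0}$ represents the trivial structured vector bundle class on $M_0$ in the Simons--Sullivan sense. Consequently there is a smooth path $\delta_s$ on $M_0$ from $\nabla_0|_{M_0}$ to the trivial connection whose Chern--Simons form is exact. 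Extending $\delta_s$ into $M\x\sone$ using a bump function $\chi(\theta)$ with $\chi(0)=1$ and $\chi$ supported near $\theta=0$ yields a modified connection $\nabla$ that is framed and satisfies $\CS(\gamma^c_\nabla)\equiv \CS(\gamma^c_{\nabla_0})\equiv\eta\mod\mathrm{exact}$.

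The main obstacle is the last modification step—extending a path of connections on $M_0$ to a Chern--Simons equivalent framed connection on $M\x\sone$. This is a relative refinement of the Simons--Sullivan construction and is handled by a careful interpolation argument together with Stokes' theorem: the change in $\CS$ induced by the cutoff interpolation is a form pulled back from $M_0\x[0,1]$ and hence fibrewise exact, so $\CS$ is preserved modulo exact forms on $M\x\sone$. Once this is established, surjectivity of $\sS$ follows immediately from the chain of reductions.
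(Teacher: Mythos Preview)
Your reduction via \eqref{eqn:stringycs} to a ``framed Simons--Sullivan'' statement on $M\times\sone$ is correct and natural, but the modification step contains a genuine gap. You claim that when you interpolate from $\nabla_0$ to a framed $\nabla$ using $\nabla_s=\nabla_0+s\chi(\theta)\,\pr_M^\ast(\delta_1-\delta_0)$, the resulting change in $\CS$ is ``pulled back from $M_0\times[0,1]$''. This is false: the Chern--Simons integrand $\tr\big(\nabla'_s\wedge R_s^{\,j-1}\big)$ involves the curvature $R_s$, which depends on the full connection $\nabla_0$ on $M\times\sone$, not only on its restriction to $M_0$. Concretely, for $M=S^1$, $n=1$ and $\nabla_0=d+a(\psi,\theta)\,d\psi+b(\psi,\theta)\,d\theta$, the $j=1$ contribution to $\CS(\nabla_0\to\nabla)$ is $-\tfrac{1}{2\pi i}\chi(\theta)\,a(\psi,0)\,d\psi$, which is not exact on $T^2$ for generic $a$ and non-constant $\chi$. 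So your stated conclusion ``$\CS$ is preserved modulo exact forms on $M\times\sone$'' fails. Even the weaker statement you actually need, that $\int_\sone\CS(\nabla_0\to\nabla)$ is exact on $M$, is not established by your argument; in the toy example it happens to vanish, but for higher rank and higher $j$ the curvature contributes genuine $d\theta$-terms that mix with $\nabla_0$ in a way your cutoff does not control.

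The paper sidesteps this entirely: rather than applying Simons--Sullivan as a black box and then repairing the framing, it reruns the Simons--Sullivan inductive construction on $\RR^N$ with the framing built in from the outset, by choosing connection $1$-forms of the shape $\omega = i\rho\,\alpha + if\,d\theta$ with $\rho(0)=0$, so that $\iota^\ast\omega=0$ automatically. The passage to a compact $M$ is then by an embedding $M\hookrightarrow\RR^N$ and naturality. If you want to salvage your route, you would need either a sharp ``relative'' version of the Simons--Sullivan surjectivity (producing connections trivial on a prescribed submanifold) or a careful argument that $\int_\sone$ of the modification's $\CS$ is exact on $M$; neither follows from the interpolation-plus-Stokes sketch you give.
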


\begin{proof}[Proof sketch]
The proof goes along the same lines as \cite[Proposition 2.6]{SSvec},  adapted to the case of structured $\Omega$ bundles. 
We begin by proving the result for $M = \RR^n$.
Consider the trivial $\Omega$ line bundle $\ul{L\CC} \to \RR^n$, which has caloron transform the trivial line bundle $\ul{\CC} \to \RR^n\x\sone$ with its canonical framing over $\RR^n_0:=\RR^n \x\{0\}$.

A framed connection on $\ul{\CC}$ corresponds to a complex-valued $1$-form $\omega$ that vanishes when pulled back to $\RR^n_0$.
Via the caloron correspondence we may view $\omega$ as the image of some module connection $\triangle$ and Higgs field $\phi$ on $\ul{L\CC}$ under the caloron transform, in which case
\[
\sS(\ul{L\CC},[\triangle,\phi]) = {\int_\sone} \CS(t\omega) 
= \sum_{j=1}^\infty \frac{1}{j!}\bigg(\frac{1}{2\pi i}\bigg)^{j}\,{\int_\sone} \omega\wedge d\omega^{j-1}\mod\mbox{exact}.
\]
Now pick any $f \in C^\infty(M;\CC)$ and set $\omega = ifd\theta$, noting that $\omega$ satisfies the pullback requirement so corresponds to a connection in the image of the caloron transform.
Then
\[
\sS(\ul{L\CC},[\triangle,\phi]) = \frac{1}{2\pi i}{\int_\sone} \omega = f.
\]
Thus we have that $\Omega^0(\RR^n;\CC) \subset \im \sS$.
Proceeding by induction, suppose for $k>0$ that
\[
\bigoplus_{i=0}^{k-1} \Omega^{2i}(\RR^n;\CC) / \im d\subset \im \sS.
\]
Write $\alpha = x_1dx_2 +\dotsb + x_{2k-1}dx_{2k}$ and $\beta = d\alpha = dx_1\wedge dx_2+ \dotsb + dx_{2k-1}\wedge dx_{2k}$ and let $\rho \colon \sone \to \RR$ be a smooth function such that $\rho(0) = 0$ and $\int_\sone \rho^k = (2\pi)^{k+1}$.
Then for any $f \in C^\infty(M)$ we set $\omega  = i\rho\,\alpha + ifd\theta$, noting as before that this is a framed connection on $\ul{\CC}$.
By a straightforward calculation we have
\begin{align*}
\bigg(\frac{1}{i}\bigg)^{k+1}\omega\wedge d\omega^k &= f \rho^k \beta^k\wedge d\theta + k\rho^k \alpha\wedge df\wedge \beta^{k-1}\wedge d\theta\\
&= (k+1)! f \rho^k dx_1\wedge\dotsb\wedge dx_{2k} \wedge d\theta \mod\mbox{exact}.
\end{align*}
and hence
\begin{multline*}
\sS(\ul{L\CC},[\triangle,\phi]) 
=  fdx_1\wedge \dotsb \wedge dx_{2k} +
\sum_{j=1}^{k} \frac{1}{j!}\bigg(\frac{1}{2\pi i}\bigg)^{j}\,{\int_\sone} \omega\wedge d\omega^{j-1}\mod\mbox{exact}.
\end{multline*}
By induction we have that
\[
\sS(\bm{\sE}) = - \sum_{j=1}^{k} \frac{1}{j!}\bigg(\frac{1}{2\pi i}\bigg)^{j}\,{\int_\sone} \omega\wedge d\omega^{j-1}\mod\mbox{exact}
\]
for some $\bm{\sE} \in \struct_0(M)$.
Hence, writing $\bm{\sL} := (\ul{L\CC},[\triangle,\phi])$ we conclude that
\[
\sS(\bm{\sE} \oplus \bm{\sL}) = fdx_1\wedge\dotsb\wedge dx_{2k} \mod\mbox{exact}.
\]
Since every $2k$-form on $\RR^n$ is a sum of such terms and $\sS$ is a homomorphism, we obtain the result for $\RR^n$.

For an arbitrary compact manifold $M$, we choose an embedding $\imath \colon M \to \RR^n$.
The pullback $\imath^\ast$ is surjective on forms and clearly $\imath^\ast\struct_0(\RR^n) \subset \struct_0(M)$, so the result follows by the  naturality of $\sS$.
\end{proof}

Combining Lemma \ref{lemma:classinverse} and Theorem \ref{surjective} we can finally prove
\begin{theorem}
\label{theorem:inversesexist}
Each element of $\struct(M)$ has an inverse.
\end{theorem}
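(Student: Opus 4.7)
The plan is to execute the two-step strategy announced in the paragraph preceding the statement: use Theorem \ref{surjective} to kill the geometric data on $\bm{\sE}$ by adding on a suitable topologically trivial structured $\Omega$ bundle, thereby reducing to an element of $\struct_{cl}(M)$ whose inverse is supplied by Lemma \ref{lemma:classinverse}.

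Concretely, given $\bm{\sE} = (\sE, [\triangle,\phi])$ with $\rk\sE = n$, I would first pick a smooth classifying map $g\colon M \to GL(n)$ for $\sE$ (for instance the Higgs field holonomy $\hol_\phi$) together with an $\Omega$ bundle isomorphism $\sE \cong g^\ast\sE(n)$, and transfer $[\triangle,\phi]$ across. The resulting structured bundle differs from the classified element $g^\ast\bm{\sE}(n) = (g^\ast\sE(n),\, g^\ast[\t_\a(n),\phi(n)])$ by the string potential
\begin{equation*}
\beta \;:=\; \cS\bigl(\triangle,\phi;\; g^\ast\t_\a(n),\, g^\ast\phi(n)\bigr) \;\in\; \Omega^{even}(M;\CC)/\im d.
\end{equation*}

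Second, by Theorem \ref{surjective} I would pick a topologically trivial $\bm{\sG} = (\ul{L\CC}^m,[\t'',\phi''])$ with $\sS(\bm{\sG}) = \beta$. Additivity of $\cS$ under $\oplus$, the transitivity relation for $\cS$, and the identity $\cS(\t'',\phi'';\,\de,\d) = -\sS(\bm{\sG})$ arising from the definition of $\sS$ together give
\begin{equation*}
\cS\bigl(\triangle\oplus\t'',\,\phi\oplus\phi'';\; g^\ast\t_\a(n)\oplus\de,\, g^\ast\phi(n)\oplus\d\bigr) \;=\; \beta - \sS(\bm{\sG}) \;=\; 0 \mod \im d,
\end{equation*}
so $\bm{\sE}\oplus\bm{\sG} \cong g^\ast\bm{\sE}(n)\oplus\ul{\bm{\sL\CC}}^m$ in $\struct(M)$. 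Finally, applying Lemma \ref{lemma:classinverse} to the first summand on the right,
\begin{equation*}
\bm{\sE} \oplus \bigl(\bm{\sG}\oplus (g^{-1})^\ast\bm{\sE}(n)\bigr) \;\cong\; \ul{\bm{\sL\CC}}^{2n+m},
\end{equation*}
which exhibits $\bm{\sG}\oplus(g^{-1})^\ast\bm{\sE}(n)$ as an inverse to $\bm{\sE}$.

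The main obstacle is the bookkeeping in the second step: one must check that adding on $\bm{\sG}$ with prescribed string potential $\beta$ really does convert the string datum $[\triangle,\phi]$ into the pulled-back canonical datum $g^\ast[\t_\a(n),\phi(n)]$ at the level of equivalence classes. Each ingredient (additivity of $\cS$ under direct sum, transitivity of $\cS$, and the definition of $\sS$) is essentially formal, but this is the point at which surjectivity of $\sS$ is married to the explicit inverses in $\struct_{cl}(M)$, so the signs and the comparison modulo exact forms deserve careful verification before the structural conclusion can be drawn.
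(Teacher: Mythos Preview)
Your proposal is correct and follows essentially the same route as the paper: pull back the canonical structured bundle via a classifying map (the paper uses $g=\hol_\phi$, which has the minor convenience that $g^\ast\phi(n)=\phi$), use surjectivity of $\sS$ to find a trivial structured bundle absorbing the string-potential discrepancy, and then invoke Lemma~\ref{lemma:classinverse}. The only cosmetic difference is that the paper rewrites $g^\ast\bm{\sE}(n)\oplus\ul{\bm{\sL\CC}}^m$ as $(g\oplus\id)^\ast\bm{\sE}(n+m)$ before applying Lemma~\ref{lemma:classinverse}, whereas you apply it to $g^\ast\bm{\sE}(n)$ directly; your sign bookkeeping is correct.
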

\begin{proof}
For any arbitrary element $\bm{\sE} = (\sE,[\triangle,\phi]) \in\struct(M)$ with $\rk\sE = n$, pick a representative pair $(\triangle,\phi)$ for the string datum.
Recall that the Higgs field holonomy $\hol_\phi \colon M \to GL(n)$ is a smooth classifying map for $\sE$ that preserves the Higgs field. Writing $g = \hol_\phi$ for brevity, we have therefore $g^\ast{\sE}(n) \cong \sE$. Consider the straight line path $\gamma$ on $\sE$ connecting the pullback pair $(g^\ast\triangle(n),\phi(n)=\phi)$ to the original pair $(\triangle,\phi)$.

By Theorem  \ref{surjective} we can alway find $\bm{\sF}\in \struct_0(M)$ such that $\sS(\bm{\sF}) = -S(\gamma)$ mod exact.
Without loss of generality, $\bm{\sF} = (\ul{L\CC}^k,[\tilde\triangle,\tilde\phi])$ so that $S(\gamma_{\tilde \triangle,\tilde\phi}) = -S(\gamma)$ mod exact.
Then on $\sE \oplus \ul{L\CC}^k$ we have that $\gamma \oplus \gamma_{\tilde\triangle,\tilde\phi}$ is a smooth path from the pair $(g^\ast\triangle(n)\oplus\de,\phi\oplus \d)$ to the pair $(\triangle\oplus\tilde\triangle,\phi\oplus\tilde\phi)$ such that
\[
S(\gamma\oplus \gamma_{\tilde\triangle,\tilde\phi}) = S(\gamma) + S(\gamma_{\triangle',\phi'}) = 0 \mod\mbox{exact}.
\]
Recalling the notation $\id \colon M\to GL$ for the constant map at the identity, we thus have that $\bm{\sE} \oplus\bm{\sF} = g^\ast\bm{\sE}(n) \oplus \id^\ast\!\bm{\sE}(k) = (g\oplus \id)^\ast\bm{\sE}(n+k)$ in $\struct(M)$ and the result follows from Lemma \ref{lemma:classinverse}.
\end{proof}

\subsection{The $\Omega$ model}
Let $K(\struct(M))$ denote the Grothendieck group completion of the semigroup $\struct(M)$ and define the \emph{rank homomorphism}
\[
\rk \colon K(\struct(M)) \lo \ZZ
\]
that sends a formal difference of structured $\Omega$ bundles $\bm{\sE} - \bm{\sF}$ to its virtual rank $\rk\sE - \rk\sF \in\ZZ$. We define
\[
\dK(M):= \ker\rk
\]
so that elements of $\dK(M)$ are precisely virtual structured $\Omega$ vector bundles of rank zero; this is the \emph{$\Omega$ model}. There are a few immediate elementary consequences of this definition and Theorem \ref{theorem:inversesexist}, namely
\begin{itemize}
\item
every element of $\dK(M)$ is of the form $\bm{\sE} - \ul{\bm{\sL\CC}}^n$ where $n = \rk\sE$;

\item
$\bm{\sE} - \bm{\sF} = 0$ in $\dK(M)$ if and only if $\bm{\sE}$ and $\bm{\sF}$ are \emph{stably isomorphic},  \emph{i.e.} 
\[
\bm{\sE} \oplus \ul{\bm{\sL\CC}}^n = \bm{\sF} \oplus \ul{\bm{\sL\CC}}^n
\]
for some $n$; and hence

\item
$\bm{\sE} - \ul{\bm{\sL\CC}}^n = 0$ in $\dK(M)$ if and only if $\bm{\sE} \in \struct_F(M)$.
\end{itemize}

By definition of string data, the string form map $S \colon \struct(M) \to \Omega_{d=0}^{odd}(M;\CC)$ that sends $(\sE,[\triangle,\phi]) \mapsto s(\triangle,\phi)$ is a well defined semigroup homomorphism.
After passing to the group completion, we have the induced map
\[
S \colon \dK(M) \lo \Omega_{d=0}^{odd}(M;\CC)
\]
that sends $\bm{\sE} - \bm{\sF} \mapsto S(\bm{\sE}) - S(\bm{\sF})$.
There is also the natural surjection  
\[
\check I \colon \dK(M) \lo \cK^{-1}(M)
\]
that discards the connective data, giving the commuting diagram of natural homomorphisms
\[
\xy
(0,15)*+{\dK(M)}="1";
(25,30)*+{\cK^{-1}(M)}="2";
(25,0)*+{\Omega^{odd}_{d=0}(M;\CC)}="3";
(50,15)*+{H^\bullet(M;\CC)}="4";
{\ar^{\check I} "1";"2"};
{\ar^{ch} "2";"4"};
{\ar^{S} "1";"3"};
{\ar^{\deR} "3";"4"};
\endxy
\]
where $ch$ is the odd Chern character map as in Theorem \ref{theorem:stringchern}.
It is clear from this presentation that $S$ and $\check I$ may be viewed respectively as the curvature and underlying class maps of a differential extension of $\cK^{-1}$.

\subsection{The action of forms}
In order to obtain the action of forms on $\dK$ we use the map $\sS$ of Theorem \ref{surjective}, following ideas of Simons--Sullivan \cite[Section 2]{SSvec}.
\begin{definition}
A pair $(\triangle,\phi)$ on $\sE \to M$ is \emph{Flat} if the corresponding caloron transformed connection $\nabla$ on $E \to M\x\sone$ has trivial holonomy around any loop in $M\x\sone$.
By \eqref{eqn:caloroncurvature} this implies that $\t$ has curvature $\sR = 0$ and the Higgs field covariant derivative $\triangle \phi = 0$.
Moreover, since $\nabla$ admits global parallel sections, via the caloron correspondence we may identify $\sE$ with a trivial bundle equipped with the trivial pair $(\de,\d)$.
\end{definition}

Recall the  odd degree complex-valued form
\[
\tau = \sum_{j=0}^\infty \frac{-j!}{(2j+1)!} \bigg( \!-\!\frac{1}{2\pi i} \bigg)^{j+1} \tr\big(\Theta_{GL}^{2j+1} \big)
\]
on the stable general linear group $GL$. This can be transgressed to the based loop group $\Omega GL$ in the usual way; if $\ev \colon \Omega GL \x\sone \to GL$ is the evaluation map then it follows that
$\ev^\ast\Theta_{GL} = \Theta_{\Omega GL} + \Phi \,d\theta$, where $\Phi (\c) = \c^{-1}\d\c$ for $\c\in \Omega GL$,
so we obtain the closed  even degree complex-valued form
\[
\widehat\tau = {\int_\sone} \ev^\ast\tau =  \sum_{j=0}^\infty \frac{-j!}{(2j)!} \bigg( \!-\!\frac{1}{2\pi i} \bigg)^{j+1}\int_\sone \tr\big(\Phi\cdot \Theta_{\Omega GL}^{2j} \big)
\]
on $\Omega GL$. It is well known that the cohomology $H^\ast(\Omega GL;\CC)$ is a polynomial ring primitively generated by the components of  $\widehat\tau$ (cf. Corollary \ref{omega generators})

Define the space of closed  even-degree forms
\[
\wedge_{\Omega GL}(M) := \left\{ G^\ast \widehat\tau \mid \text{$G\colon M \to \Omega GL$ is smooth} \right\}
\]
with group structure given by $(G\oplus H)^\ast\widehat\tau = G^\ast\widehat\tau+ H^\ast\widehat\tau$ and $(G^{-1})^\ast\widehat\tau = -G^\ast\widehat\tau$.
By Bott periodicity $\Omega GL \simeq BGL\x\ZZ$, we may identify even $K$-theory with smooth homotopy classes of maps $M\to \Omega GL$, in which case the Chern character on $K^0(M)$ sends $[G] \mapsto [G^\ast\widehat\tau]\in H^\bullet(M;\CC)$.
This gives an identification of $\wedge_{\Omega GL}(M) \mod\mbox{exact}$ with the space of all even Chern characters on $M$.

\begin{proposition}
If $(\triangle,\phi)$ and $(\tilde \triangle,\tilde\phi)$ are any two Flat pairs on $\sE$, then
\[
\cS(\triangle,\phi;\tilde \triangle,\tilde\phi) \in \wedge_{\Omega GL}(M) \mod \mathrm{exact}.
\]
\end{proposition}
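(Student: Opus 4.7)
The plan is to exhibit the change of trivialisation between two Flat pairs as a smooth map $G\colon M \to \Omega GL(n)$ and to identify $\cS(\triangle,\phi;\tilde{\triangle},\tilde{\phi})$ with $G^\ast\widehat{\tau}$ modulo exact, using the formula \eqref{eqn:stringycs} relating the string potential to the fibre-integrated Chern--Simons form.

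First, by the definition of Flat, both pairs correspond via the caloron correspondence to framed connections $\nabla$ and $\tilde{\nabla}$ on the caloron transform $E \to M\x\sone$ with trivial holonomy. Choosing a frame of parallel sections for $\nabla$ identifies $\nabla$ with the trivial connection $d$; in this frame $\tilde{\nabla}$ is automatically of the form $d + g^{-1}dg$ for a unique smooth map $g\colon M\x\sone \to GL(n)$. Since both connections are framed, the section defining the framing is parallel for both, forcing $g|_{M_0} = \id$. Hence $g$ corresponds by adjunction to a smooth map $G\colon M \to \Omega GL(n)$ via $g = \ev\circ(G\x\id_\sone)$.

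Next, take the straight line path $\gamma^c_t = d + tg^{-1}dg$ from $\nabla$ to $\tilde{\nabla}$. By the caloron correspondence (Proposition \ref{prop:geometriccal}), this corresponds to a smooth path $\gamma$ of module connections and Higgs fields from $(\triangle,\phi)$ to $(\tilde{\triangle},\tilde{\phi})$. Using that $\d_t \gamma^c_t = g^{-1}dg$ and that the curvature along this path is $R_t = (t^2-t)(g^{-1}dg)^2$, the definition \eqref{eqn:chernsimons} of the Chern--Simons form together with the beta integral $\int_0^1 (t^2-t)^{j-1}\,dt = (-1)^{j-1}((j-1)!)^2/(2j-1)!$ gives the clean identity
\[
\CS(\gamma^c) = \sum_{k=0}^\infty \frac{-k!}{(2k+1)!}\Big(\!-\frac{1}{2\pi i}\Big)^{k+1} \tr\big((g^{-1}dg)^{2k+1}\big) = g^\ast \tau.
\]

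Finally, by \eqref{eqn:stringycs} and the factorisation $g = \ev\circ(G\x\id_\sone)$,
\[
S(\gamma) = {\int_\sone} \CS(\gamma^c) = {\int_\sone} (G\x\id_\sone)^\ast \ev^\ast\tau = G^\ast {\int_\sone} \ev^\ast\tau = G^\ast \widehat{\tau},
\]
where the pullback commutes past the fibre integral because $G$ is constant in the $\sone$ direction. Since $\cS(\triangle,\phi;\tilde{\triangle},\tilde{\phi}) = S(\gamma) \mod \mathrm{exact}$ by definition, we conclude that $\cS(\triangle,\phi;\tilde{\triangle},\tilde{\phi}) = G^\ast\widehat{\tau} \mod \mathrm{exact}$, lying in $\wedge_{\Omega GL}(M)\mod\mathrm{exact}$. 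The only real obstacle is checking the framing condition $g|_{M_0}=\id$ carefully so that $g$ legitimately descends to a based loop map $G$; everything else is either direct calculation or application of the already-established caloron dictionary.
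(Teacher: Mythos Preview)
Your proof is correct and follows essentially the same route as the paper's own argument: identify the two Flat pairs with $d$ and $g^{-1}\circ d\circ g$ on the caloron transform, compute the Chern--Simons form of the straight-line path to get $g^\ast\tau$, and fibre-integrate using \eqref{eqn:stringycs} to obtain $G^\ast\widehat{\tau}$. You are in fact slightly more careful than the paper in flagging the framing condition $g|_{M_0}=\id$ needed for $G$ to land in $\Omega GL(n)$; note that this requires choosing the global parallel frame for $\nabla$ to extend the given framing over $M_0$ and normalising $g$ by a constant, but this is straightforward.
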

\begin{proof}
Let $\nabla$ and $\tilde \nabla$ be the corresponding caloron transformed connections.
Since $\nabla$ and $\tilde \nabla$ both have trivial holonomy we may without loss of generality take $\nabla = d$ and write $\tilde \nabla = g^{-1}\circ d \circ g$ for some smooth map $g \colon M\x\sone \to GL(n)$ viewed as an automorphism of the trivial bundle, with $n = \rank E$. The path of connections $\c(t) = d +tg^{-1} dg$ has curvature $ (t^2-t) g^\ast \big(\Theta\wedge\Theta\big)$, 
with $\Theta$ the Maurer--Cartan form on $GL(n)$.
It follows that the Chern--Simons form associated to $\c$ is 
\[
\CS(\c) 
=\sum_{j=1}^\infty \frac{-(j-1)!}{(2j-1)!} \bigg(\!-\!\frac{1}{2\pi i}\bigg)^j g^\ast \tr\big(\Theta^{2j-1} \big).
\]
Thus
\[
\cS(\triangle,\phi;\tilde \triangle,\tilde\phi) = \sum_{j=0}^\infty \frac{-j!}{(2j+1)!} \bigg(\!-\!\frac{1}{2\pi i}\bigg)^{j+1} {\int_\sone} g^\ast \tr\big(\Theta^{2j+1} \big) \mod\mbox{exact}.
\]
Denoting by $G\colon M \to \Omega GL(n)$ the map $G(m)(\theta) := g(m,\theta)$, we have 
\[
g^\ast\Theta_{(m,\theta)} = G^\ast\Theta_m + G^\ast\Phi(m) \,d\theta,
\]
where the $\Theta$ on the right hand side denoting the Maurer--Cartan form on $\Omega GL(n)$.
Plugging this into the above expression for $\cS(\triangle,\phi;\tilde \triangle,\tilde\phi)$ gives the result.
\end{proof}

\begin{remark}\label{all flat}
We note that by the above, for any smooth map $G \colon M \to \Omega GL(n)$, the straight line path $\gamma$ on the trivial $\Omega$ vector bundle $\ul{L\CC}^n$ from the trivial pair $(\de,\d)$ to the Flat pair $(G^{-1}\de G,G^{-1}\d G)$ yields $S(\gamma) = G^\ast\widehat\tau$ on the nose.
\end{remark}
Let $\widehat\wedge_{\Omega GL}(M):= \wedge_{\Omega GL}(M) + d\Omega^{odd}(M;\CC)$.
We extend the homomorphism $\sS$ of Theorem \ref{surjective} to a map $\struct_T(M) \to \Omega^{even}(M;\CC)/\widehat\wedge_{\Omega GL}(M)$ as follows.
If $\bm{\sE} = (\sE,[\triangle,\phi]) \in \struct_T(M)$  choose trivial bundles $\sF$ and $\sH$ such that $\sE \oplus \sF \cong \sH$.
Pick any Flat pairs $(\de,\d)$, $(\tilde \de,\tilde\d)$ on $\sH$ and $\sF$ respectively and set
\[
\sS(\bm{\sE}) := \cS(\de,\d;\triangle\oplus\tilde\de,\phi\oplus\tilde \d) \mod \widehat\wedge_{\Omega GL}(M).
\]
It is straightforward to verify that $\sS$ is a well defined semigroup homomorphism and is surjective on $\Omega^{even}(M;\CC)/ \widehat\wedge_{\Omega GL}(M)$ by Theorem \ref{surjective}.
However 

\begin{lemma}
The kernel of $\sS$ is precisely $\struct_F(M)$.
\end{lemma}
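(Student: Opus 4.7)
The plan is to establish the two inclusions separately; the nontrivial content sits in $\ker\sS\subseteq \struct_F(M)$, where the job is to convert the vanishing of a quotient-class of forms into a genuine stable isomorphism of structured $\Omega$ bundles.

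\textbf{The inclusion $\struct_F(M)\subseteq \ker\sS$} I would handle directly from the definitions. If $\bm{\sE}\oplus \ul{\bm{\sL\CC}}^n\cong \ul{\bm{\sL\CC}}^{n+\rk\sE}$ as structured bundles, then in the definition of $\sS(\bm{\sE})$ one may take $\sF=\ul{L\CC}^n$, $\sH = \ul{L\CC}^{n+\rk\sE}$ and select the trivial Flat pairs on both. The stable triviality says precisely that $[\t\oplus \de,\phi\oplus\d]=[\de,\d]$ on $\sH$, so $\cS(\de,\d;\t\oplus\de,\phi\oplus\d)=0 \mod \mbox{exact}$ and hence $\sS(\bm{\sE})=0$.

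\textbf{For the reverse inclusion}, fix $\bm{\sE}=(\sE,[\t,\phi])$ with $\sS(\bm{\sE})=0 \mod \widehat\wedge_{\Omega GL}(M)$. Pick trivial $\sF,\sH$ with $\sE\oplus\sF\cong\sH$ and Flat pairs $(\de,\d)$ on $\sH$, $(\tilde\de,\tilde\d)$ on $\sF$, so that
\[
\cS(\de,\d;\t\oplus\tilde\de,\phi\oplus\tilde\d) = G^\ast\widehat\tau \mod \mbox{exact}
\]
for some smooth $G\colon M\to \Omega GL(m)$ (using that $M$ is compact to replace stable $\Omega GL$ by a finite rank representative). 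After first stabilising $\bm{\sE}$ by a trivial summand $\ul{\bm{\sL\CC}}^k$, which changes neither $\sS(\bm{\sE})$ nor the isomorphism class we want, and padding $G$ by the identity, I would assume $m=\rk\sH$ and fix an identification of $\sH$ with $\ul{L\CC}^m$ under which $(\de,\d)$ is literally the trivial pair.

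The key step is then to invoke Remark \ref{all flat} on $\ul{L\CC}^m$: the pair $(G^{-1}\de G, G^{-1}\d G)$ is Flat and satisfies $\cS(\de,\d;G^{-1}\de G, G^{-1}\d G)=G^\ast\widehat\tau \mod \mbox{exact}$ exactly. The transitivity relation for $\cS$ then gives
\[
\cS(G^{-1}\de G, G^{-1}\d G;\t\oplus\tilde\de,\phi\oplus\tilde\d) = 0 \mod \mbox{exact},
\]
so the two pairs define the same string datum on $\sH$. Equivalently, $\bm{\sE}\oplus(\sF,[\tilde\de,\tilde\d])\cong (\sH,[G^{-1}\de G, G^{-1}\d G])$ as structured $\Omega$ bundles. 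By the definition of Flat pair, each side is isomorphic as a structured $\Omega$ bundle to $\ul{\bm{\sL\CC}}^{\rk\sF}$ and $\ul{\bm{\sL\CC}}^{\rk\sH}$ respectively, yielding $\bm{\sE}\oplus\ul{\bm{\sL\CC}}^{\rk\sF}\cong\ul{\bm{\sL\CC}}^{\rk\sH}$, i.e.\ $\bm{\sE}\in\struct_F(M)$.

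The main obstacle I anticipate is the bookkeeping around stabilisation: one needs the ambient rank to accommodate $G$, to know that $G$ may be taken with target $\Omega GL(m)$ for a finite $m$, and to check that replacing $\bm{\sE}$ by $\bm{\sE}\oplus\ul{\bm{\sL\CC}}^k$ is harmless on both sides of the desired equality. Once the stabilisation is arranged, Remark \ref{all flat} does the substantive work by realising the class $G^\ast\widehat\tau$ as a genuine string-potential shift between two Flat pairs, which is what converts the mod-$\wedge_{\Omega GL}$ vanishing into an actual isomorphism of structured $\Omega$ bundles.
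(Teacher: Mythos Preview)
Your proof is correct and follows essentially the same route as the paper's: both use Remark~\ref{all flat} to realise the obstruction $G^\ast\widehat\tau$ as the string potential between the trivial Flat pair and the gauge-transformed Flat pair $(G^{-1}\de G, G^{-1}\d G)$, then apply transitivity of $\cS$ to conclude stable flatness. Your additional care with the stabilisation bookkeeping (arranging the rank of $\sH$ to accommodate $G$) is a detail the paper simply elides.
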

\begin{proof}
By definition we have $\struct_F(M) \subset \ker\sS$.
For the converse, take $\bm{\sE} \in \ker\sS$ so that
\[
\sS(\bm{\sE}) = \cS(\de,\d;\triangle\oplus\tilde\de,\phi\oplus \tilde \d) = G^\ast\widehat\tau \mod\mbox{exact}
\]
with $(\sH,[\de,\d] ) $ and $(\sF,[\tilde \de,\tilde \d])$ as above.
The caloron transform of $\bf \sH$ is a trivial bundle $\ul{\CC}^n \to M\x\sone$ with its trivial connection $d$.
Defining $g \colon M\x\sone \to GL(n)$ by $g(m,\theta) =G(m)(\theta)$, set $\nabla := g^{-1}\circ d \circ g$ and let $(\hat\de,\hat\d)$ denote the corresponding Flat pair on $\sH$. As in Remark \ref{all flat} we have $\cS(\de,\d,\hat\de,\hat\d) = G^\ast\widehat\tau \mod\mbox{exact}$ and so
\[
\cS(\hat\de,\hat\d;\triangle\oplus\tilde\de,\phi\oplus\tilde\d) = 0 \mod\mbox{exact},
\]
which shows that $\bm{\sE}$ is stably flat.
\end{proof}

\begin{corollary}
The map $\sS$ induces a semigroup isomorphism
\[
\sS \colon \struct_T(M)/\struct_F(M) \lo \Omega^{even}(M;\CC)/\widehat\wedge_{\Omega GL}(M)
\]
and hence $\struct_T(M)/\struct_F(M)$ is a group.
\end{corollary}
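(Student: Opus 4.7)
The plan is to deduce the corollary from the three facts already established about $\sS$: it is a well-defined semigroup homomorphism $\struct_T(M) \to \Omega^{even}(M;\CC)/\widehat\wedge_{\Omega GL}(M)$, it is surjective (by Theorem \ref{surjective} together with the fact that $d\Omega^{odd}\subset \widehat\wedge_{\Omega GL}$), and its kernel is exactly $\struct_F(M)$. On the quotient semigroup, two elements $\bm{\sE},\bm{\sE}'\in\struct_T(M)$ are identified precisely when $\bm{\sE}\oplus\bm{\sF}=\bm{\sE}'\oplus\bm{\sF}'$ for some $\bm{\sF},\bm{\sF}'\in\struct_F(M)$, so $\sS$ descends to a well-defined map on $\struct_T(M)/\struct_F(M)$ since it vanishes on $\struct_F(M)$ and is additive.

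For injectivity of the induced map, the strategy is to use Theorem \ref{theorem:inversesexist} to manufacture a complement. Given $\bm{\sE},\bm{\sE}'\in\struct_T(M)$ with $\sS(\bm{\sE})=\sS(\bm{\sE}')$, choose $\bm{\sG}\in\struct(M)$ such that $\bm{\sE}'\oplus\bm{\sG}\cong \ul{\bm{\sL\CC}}^n$ for some $n$. The first thing to check is that $\bm{\sG}\in\struct_T(M)$: since $\sE'$ is stably trivial there is an $m$ with $\sE'\oplus\ul{L\CC}^m$ trivial, and then $\sG\oplus\ul{L\CC}^{m+n}\cong \sG\oplus\sE'\oplus\sE'\oplus\ul{L\CC}^m$ is seen to be trivial by rearranging summands. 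Consequently $\sS(\bm{\sE}\oplus\bm{\sG}) = \sS(\bm{\sE}')+\sS(\bm{\sG})=\sS(\bm{\sE}'\oplus\bm{\sG})=0$, so by the kernel computation $\bm{\sE}\oplus\bm{\sG}\in\struct_F(M)$. Since $\bm{\sE}'\oplus\bm{\sG}=\ul{\bm{\sL\CC}}^n\in\struct_F(M)$ as well, the relation
\[
\bm{\sE}\oplus(\bm{\sE}'\oplus\bm{\sG})\;=\;\bm{\sE}'\oplus(\bm{\sE}\oplus\bm{\sG})
\]
exhibits $\bm{\sE}\sim\bm{\sE}'$ in the quotient.

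Surjectivity of the induced map is immediate from surjectivity of $\sS$ on $\struct_T(M)$, so we obtain the desired semigroup isomorphism. Finally, an abelian semigroup with zero that is semigroup-isomorphic to a group is automatically a group: for any class $[\bm{\sE}]$, surjectivity produces some $[\bm{\sF}]$ with $\sS([\bm{\sF}])=-\sS([\bm{\sE}])$, and then $\sS([\bm{\sE}]+[\bm{\sF}])=0$ forces $[\bm{\sE}]+[\bm{\sF}]=[\ul{\bm{\sL\CC}}^0]$ by injectivity. This gives the group structure on $\struct_T(M)/\struct_F(M)$.

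The only non-routine ingredient is the injectivity step, and within it the point that demands attention is confirming $\bm{\sG}\in\struct_T(M)$ so that $\sS(\bm{\sG})$ is defined; this is what lets us transfer the hypothesis $\sS(\bm{\sE})=\sS(\bm{\sE}')$ into a statement inside $\struct_F(M)$. Everything else is formal manipulation of the three structural properties of $\sS$ together with Theorem \ref{theorem:inversesexist}.
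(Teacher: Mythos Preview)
The paper states this corollary without proof, treating it as an immediate consequence of the preceding lemma (kernel of $\sS$ is $\struct_F(M)$) together with surjectivity. Your detailed verification is correct in strategy and fills in exactly the point that is not automatic for semigroups: that a surjective monoid homomorphism onto a group with known kernel really does induce a bijection on the quotient, which you handle by invoking Theorem~\ref{theorem:inversesexist}.

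There is one small slip. In checking that $\bm{\sG}\in\struct_T(M)$ you write
\[
\sG\oplus\ul{L\CC}^{m+n}\;\cong\;\sG\oplus\sE'\oplus\sE'\oplus\ul{L\CC}^m,
\]
but this has a rank mismatch in general (the right-hand side has rank $\rk\sG+2\rk\sE'+m$, not $\rk\sG+m+n$). The intended argument is simpler: from $\sE'\oplus\sG\cong\ul{L\CC}^n$ and $\sE'\oplus\ul{L\CC}^m$ trivial one gets
\[
\sG\oplus\big(\sE'\oplus\ul{L\CC}^m\big)\;\cong\;\big(\sG\oplus\sE'\big)\oplus\ul{L\CC}^m\;\cong\;\ul{L\CC}^{n+m},
\]
so $\sG$ plus a trivial bundle is trivial. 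With this correction your proof stands.
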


Note that we may identify $\struct_T(M)/\struct_F(M)$ with the kernel of the map $\check I$ as follows.
Let
\[
\jmath \colon \struct_T(M) /\struct_F(M) \lo \ker \check I
\]
be the map $\{\bm{\sE}\} \mapsto \bm{\sE} - \ul{\bm{\sL\CC}}^n$, recalling that $\sE - \ul{L\CC}^n = 0$ in $\cK^{-1}(M)$ if and only if $\sE$ is stably trivial.
Moreover, it is clear that $\jmath(\{\bm{\sE}\}) = 0$ if and only if $\bm{\sE}$ is stably flat and also that $\jmath$ is surjective, so by precomposing $\jmath$ by $\sS^{-1}$ we obtain an isomorphism
\[
\Omega^{even}(M;\CC)/\widehat\wedge_{\Omega GL}(M) \simto \ker\check I
\]
and hence an injection $\imath \colon \Omega^{even}(M;\CC)/\widehat\wedge_{\Omega GL}(M) \to \dK(M)$.
As noted previously, the even Chern character may be represented by pullbacks of the class $[\widehat\tau]\in H^\bullet(\Omega GL;\CC)$ and so it follows that $\widehat\wedge_{\Omega GL}(M) = \im ch$.
We can thus define the action of forms $\check a$ as the composition
\[
\Omega^{even}(M;\CC)/\im d \xrightarrow{\;\;\pr\;\;} \big( \Omega^{even}(M;\CC)/\im d \big) \big/ \im ch \xrightarrow{\;\;\imath\;\;} \dK(M)
\]
from which it follows that the sequence
\[
K^{0}(M) \xrightarrow{\;\;ch\;\;} \Omega^{even}(M;\CC) /\im\,d  \xrightarrow{\;\;\check a\;\;} \dK(M)  \xrightarrow{\;\;\check I\;\;} \cK^{-1}(M) \lo 0
\]
is exact. To verify that $S \circ \check a  = d$, we note that for any stably trivial $\bm{\sE} = (\sE,[\triangle,\phi])$ 
\[
d\sS(\{\bm{\sE}\}) = s(\triangle,\phi) = S \circ \jmath\, (\{\bm{\sE}\}).
\]
Thus for any $\{\omega\} \in \Omega^{even}(M;\CC)/\im d$
\[
S \circ \check a (\{\omega\}) = S \circ \imath(\{\omega\}) = S\circ \jmath\circ \sS^{-1}(\{\omega\}) = d\omega.
\]
At last we have the following
\begin{theorem}\label{odd diff K}
The functor $M\mapsto \dK(M)$ with the natural transformations $S$, $\check I$ and $\check a$ as above defines a differential extension of odd $K$-theory.
\end{theorem}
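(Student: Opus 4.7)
The plan is to verify the axioms of a differential extension by assembling the results already established in the preceding subsections. The functoriality of $\dK$ and the naturality of $S$, $\check I$ and $\check a$ follow immediately from the naturality of pullback of module connections and Higgs fields, the naturality of the caloron transform, and the fact that string forms and string potentials commute with pullback (since the Chern character and Chern--Simons form do). Since $\cK^\bullet$ gives (topological) $K$-theory by the final theorem of Section \ref{S:omega}, this will take care of the contravariant functor and transformations in the definition, so the substantive content is verifying the four compatibility axioms.

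Next I would verify the commuting pentagon. Commutativity of the upper triangle
\[
\deR \circ S = ch \circ \check I
\]
is precisely Theorem \ref{theorem:stringchern}: by definition $\check I$ forgets the string datum, $S$ takes the string form of any representative $(\triangle,\phi)$, and Theorem \ref{theorem:stringchern} says that the de Rham class $[s(\triangle,\phi)]$ equals the odd Chern character of the underlying virtual $\Omega$ bundle. The identity $S \circ \check a = d$ is already verified in the paragraph immediately preceding the theorem statement, using $d \sS(\{\bm{\sE}\}) = s(\triangle,\phi)$ together with the factorisation $\check a = \imath \circ \pr = \jmath \circ \sS^{-1} \circ \pr$.

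The main step is exactness of
\[
K^0(M) \xrightarrow{\;\;ch\;\;} \Omega^{even}(M;\CC)/\im d \xrightarrow{\;\;\check a\;\;} \dK(M) \xrightarrow{\;\;\check I\;\;} \cK^{-1}(M) \lo 0.
\]
Surjectivity of $\check I$ follows from the existence of module connections and Higgs fields on any $\Omega$ bundle (established in Section \ref{S:omega}) together with the Grothendieck construction. Exactness at $\dK(M)$ is the heart of the matter: $\check I(\bm{\sE}-\ul{\bm{\sL\CC}}^n)=0$ in $\cK^{-1}(M)$ is equivalent, by the bullet points following the definition of $\dK$, to $\sE$ being stably trivial, i.e.\ $\bm{\sE}\in\struct_T(M)$ after stabilisation; then the corollary preceding the theorem identifies $\struct_T(M)/\struct_F(M)$ with $\Omega^{even}(M;\CC)/\widehat\wedge_{\Omega GL}(M)$ via $\sS$, which via $\jmath$ is exactly $\ker\check I$. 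Exactness at $\Omega^{even}(M;\CC)/\im d$ reduces to the identification $\widehat\wedge_{\Omega GL}(M) = \im(ch)$ modulo exact forms; this is exactly the content of the paragraph defining $\wedge_{\Omega GL}(M)$, since by Bott periodicity smooth classifying maps $M\to \Omega GL$ represent even $K$-theory classes and $\widehat\tau$ pulls back to their Chern characters.

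The only subtle step I would pay careful attention to is the last one. A priori $\sS$ only lands in $\Omega^{even}/\widehat\wedge_{\Omega GL}$ and it must be checked that $\widehat\wedge_{\Omega GL}(M)$ modulo exact forms coincides with the image of $ch\colon K^0(M)\to \Omega^{even}(M;\CC)/\im d$ (not just with the image in cohomology). For this I would use Remark \ref{all flat}, which shows that every form in $\widehat\wedge_{\Omega GL}(M)$ arises on the nose as $S(\gamma)$ for a path between Flat pairs on some trivial $\Omega$ bundle, combined with Bott periodicity to identify homotopy classes $[M,\Omega GL]$ with $K^0(M)$ in a way compatible with Chern characters. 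Granting this identification, the exact sequence and the compatibility $S\circ\check a = d$ together with the commuting pentagon complete all the axioms.
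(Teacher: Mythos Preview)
Your proposal is correct and follows essentially the same approach as the paper. The paper does not give a separate proof for this theorem; rather, all the axioms are verified in the subsections immediately preceding the statement (the commuting diagram after defining $S$ and $\check I$, the exact sequence exhibited right after defining $\check a$, and the identity $S\circ\check a = d$ checked in the last paragraph before the theorem), and you have correctly identified each of these ingredients and how they fit together.

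One minor remark: your invocation of Remark \ref{all flat} for the identification $\widehat\wedge_{\Omega GL}(M) = \im ch$ is not quite the right pointer. That remark is about realising $G^\ast\widehat\tau$ as a string potential between Flat pairs, which is used for the kernel computation of $\sS$. The identification $\widehat\wedge_{\Omega GL}(M) = \im ch$ in $\Omega^{even}/\im d$ is purely the Bott periodicity statement that $K^0(M)\cong [M,\Omega GL]$ with Chern character $[G]\mapsto [G^\ast\widehat\tau]$, together with the observation that since $\widehat\tau$ is closed, homotopic maps pull it back to forms differing by exact terms. This is exactly what the paper asserts in the sentence preceding the definition of $\check a$, and no further subtlety arises.
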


It is important in the sequel to have a thorough understanding of the action of forms map on $\dK$.
For any $\omega\in\Omega^{even}(M;\CC)$ we have $a(\{\omega\}) = \bm{\sE} - \ul{\bm{\sL\CC}}^n$, where $\bm{\sE} \in \struct_0(M)$ has the following property. For any smooth path $\c$ from a Flat pair $(\de,\d)$ to a chosen representative $(\triangle,\phi)$ of the string datum of $\bm{\sE}$, we have
\[
S(\c) = \omega \mod \widehat\wedge_{\Omega GL}(M),
\]
so $S(\c) = \omega + G^\ast\widehat\tau + d\chi$ for some smooth $G \colon M \to  \Omega GL$ and odd form $\chi$ for any such path $\c$.  By Remark \ref{all flat} the term $G^\ast\widehat\tau$ can be offset  by perturbing to a different Flat pair $(\tilde \de,\tilde \d)$ using a straight line segment. Thus we conclude that we can arrange to have a path $\tilde \c$ originating at  $(\tilde \de,\tilde \d)$ and ending at $(\triangle,\phi)$ that satisfies
$$S(\tilde \c) = \omega \mod \mbox{exact}. $$

\subsection{Odd differential $K$-theory}
So far we have shown that $\dK$ is a differential extension of odd $K$-theory. As proved in \cite{BS2}, there are infinitely many inequivalent  differential extensions of odd $K$-theory. However, differential extensions equipped with $S^1$-integration map are unique up to unique isomorphism and such an extension is what we mean by a \emph{model} for differential $K$-theory.

To show that $\dK$ does indeed define odd differential $K$-theory, we first fix a model $(\widehat K^\bullet,\widehat{ch},\widehat{I},\widehat{a})$ for differential $K$-theory with $\sone$-integration $\widehat{\int_\sone}$. 
Recalling our notation $(\edK,\check{ch}, \check I,\check{a}) $ for the Simons--Sullivan model for even differential $K$-theory, by \cite[Theorem 3.10]{BS3} there is a \emph{unique} natural isomorphism $\Phi_0 \colon \edK \to \widehat K^0$ preserving all of the structure.

Using the caloron correspondence together with the map $\Phi_0$, we define a map
\begin{equation}
\label{eqn:ktheoryiso}
\Phi_1 \colon \dK(M) \lo \widehat K^{-1}(M),
\end{equation}
sending a formal difference of structured $\Omega$ vector bundles $\bm{\sE}  =(\sE,[\triangle,\phi])$ and $\bm{\sF} = (\sF,[\tilde \triangle,\tilde\phi])$ to 
\begin{equation}
\label{eqn:odddiffkmap}
\Phi_1\big (\bm{\sE} - \bm{\sF}\big) := \widehat {\int_\sone} \Phi_0 \big( (E,[\nabla]) - (F,[\tilde \nabla]) \big)
\end{equation}
where $(E,\nabla)$ and $(F,\tilde \nabla)$ are the caloron transformed bundles corresponding to some choice of representatives of the string data.
A priori the definition of $\Phi_1$ depends on this choice and there is no canonical representative in the general case, but nevertheless we have the following
\begin{proposition}
\label{prop:welldefined}
The map $\Phi_1$ is a well defined group homomorphism.
\end{proposition}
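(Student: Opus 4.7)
The plan is to establish well-definedness in two stages: first, that the value of $\Phi_1(\bm{\sE}-\bm{\sF})$ does not depend on the choice of representative pairs for the string data on $\bm{\sE}$ and $\bm{\sF}$; second, that $\Phi_1$ is additive on formal differences. Combined with the standard Grothendieck group formalism, the second step will also show that $\Phi_1$ respects stable isomorphism and descends to a group homomorphism on $\dK(M)$.

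For step one, by additivity it suffices to consider a single structured $\Omega$ bundle $\bm{\sE} = (\sE,[\triangle,\phi])$ together with two pair representatives $(\triangle_0,\phi_0)$ and $(\triangle_1,\phi_1)$ of its string datum. Choosing any smooth path $\gamma$ joining them, the string datum relation together with identity \eqref{eqn:stringycs} gives
\[
\int_\sone \CS(\gamma^c) \;=\; S(\gamma) \;=\; 0 \mod \mathrm{exact}
\]
on $M$, where $\gamma^c$ denotes the induced path of framed connections on the caloron transform $E \to M\x\sone$. In the Simons--Sullivan model one has the standard identity
\[
(E,[\nabla_0]) - (E,[\nabla_1]) \;=\; \check{a}\bigl([\CS(\gamma^c)]\bigr)
\]
in $\edK(M\x\sone)$, which measures precisely the failure of Chern--Simons exactness. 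Since $\Phi_0$ preserves the action of forms by uniqueness and $\widehat{\int_\sone}$ commutes with the action of forms by Definition \ref{defn:intextension}, I would then compute
\[
\widehat{\int_\sone}\!\Bigl(\Phi_0(E,[\nabla_0]) - \Phi_0(E,[\nabla_1])\Bigr) \;=\; \widehat{a}\!\left(\Bigl[\int_\sone \CS(\gamma^c)\Bigr]\right) \;=\; 0 ,
\]
because $\int_\sone \CS(\gamma^c)$ vanishes modulo $\im d$. The same argument applied to $\bm{\sF}$ completes step one.

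For step two, the additivity of $\Phi_1$ on formal differences follows because the caloron transform converts direct sums of $\Omega$ bundles (with their module connections and Higgs fields) into direct sums of framed vector bundles equipped with the direct sum of their framed connections; combined with the additivity of $\Phi_0$ and of $\widehat{\int_\sone}$ this gives the result. This in turn implies that $\Phi_1$ respects the cancellation relation on the Grothendieck group, so the map descends to a well-defined homomorphism on $\dK(M)$.

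The main obstacle lies in step one. It is not true in general that $(E,[\nabla_0]) = (E,[\nabla_1])$ in $\edK(M\x\sone)$ merely because $(\triangle_0,\phi_0) \sim (\triangle_1,\phi_1)$ as string data on $M$: string datum equivalence only asks that the fibre integral $\int_\sone \CS(\gamma^c)$ be exact on $M$, whereas Simons--Sullivan equivalence on $M\x\sone$ requires $\CS(\gamma^c)$ itself to be exact. The argument resolves this subtlety precisely via the compatibility of $\widehat{\int_\sone}$ with $\widehat a$, which converts the weaker exactness on $M$ into outright triviality in $\widehat K^{-1}(M)$.
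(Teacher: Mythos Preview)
Your argument is correct and follows essentially the same line as the paper's proof. The only difference is one of packaging: the paper applies the Bunke--Schick Homotopy Formula directly in $\widehat K^0(M\times\sone)$ to the class $\Phi_0\big((E\times\I,[\nabla^{\gamma^c}])-(F\times\I,[\tilde\nabla^{\tilde\gamma^c}])\big)$ and then computes $\int_\I \widehat{ch}(\widehat x)=\CS(\gamma^c)-\CS(\tilde\gamma^c)$, whereas you invoke the identity $(E,[\nabla_0])-(E,[\nabla_1])=\check a([\CS(\gamma^c)])$ in $\edK(M\times\sone)$ and then push forward via $\Phi_0$. Your ``standard identity'' is itself nothing but the Homotopy Formula applied to $\edK$, so the two arguments are equivalent; your version has the mild advantage of isolating the bundle $\sE$ and the bundle $\sF$ separately. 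One small caveat: the compatibility $\widehat{\int_\sone}\circ\widehat a=\widehat a\circ\int_\sone$ that you cite from Definition~\ref{defn:intextension} is not literally listed there (only compatibility with $\Ch$ and $I$ is), though it is a standard consequence and the paper uses it tacitly as well.
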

\begin{proof}
Take any pairs of representatives $(\triangle_0,\phi_0),(\triangle_1,\phi_1)$ for $[\triangle,\phi]$ and similarly $(\tilde\triangle_0,\tilde\phi_0),$ $(\tilde\triangle_1,\tilde\phi_1)$ for $[\tilde \triangle,\tilde \phi]$. Write $\nabla_0,\nabla_1$ and $\tilde\nabla_0,\tilde\nabla_1$ respectively for the corresponding caloron transformed connections.
Showing that $\Phi_1$ is well defined is then equivalent to showing that
\[
\widehat{\int_\sone} \Phi_0 \big( (E,[\nabla_0]) - (F,[\tilde \nabla_0]) \big) = \widehat{\int_\sone} \Phi_0 \big( (E,[\nabla_1]) - (F,[\tilde \nabla_1]) \big).
\]
Let $\c$ be the line segment from $(\triangle_0,\phi_0)$ to $(\triangle_1,\phi_1)$ and $\tilde \c$ the line segment from $(\tilde \t_0,\tilde \phi_0)$ to $(\tilde \t_1,\tilde\phi_1)$. By definition of string data we have that $S(\c)$ and $S(\tilde \c)$ are exact.
Under the caloron transform, $\c$, $\tilde\c$ give rise to smooth paths $\c^c$, $\tilde\c^c$  on $E$ and $F$ or, equivalently, framed connections $\nabla^{\c^c}$, $\tilde\nabla^{\tilde\c^c}$ on $E\x\I$ and $F\x\I$ respectively. 
By naturality of $\Phi_0$, the Homotopy Formula \cite[Theorem 2.6]{BS3} applied to the even differential $K$-class $\widehat x:=\Phi_0 \big( (E\x\I,[\nabla^{\c^c}]) - (F\x\I,[\tilde\nabla^{\tilde\c^c}])\big)$ yields
\[
\Phi_0 \big( (E,[\nabla_1]) - (F,[\tilde\nabla_1]) \big) - \Phi_0 \big( (E,[\nabla_0]) - (F,[\tilde\nabla_0]) \big)  = \widehat a \Bigg({\int_\I}\,\widehat{ch}\big(\widehat{x}\big) \Bigg).
\]
Since $\Phi_0$ respects the curvature maps we have
\[
\widehat{ch}\big(\widehat{x}\big) = \check{ch} \big( (E\x\I,[\nabla^{\c^c}]) - (F\x\I,[\tilde\nabla^{\tilde\c^c}])\big) = \Ch(\nabla^{\c^c}) - \Ch(\tilde\nabla^{\tilde\c^c})
\]
and hence
\[
{\int_\I}\,\widehat{ch}\big(\widehat{x}\big) = \int_0^1 \varsigma_t^\ast\imath_{\d_t} \big(\Ch(\nabla^{\c^c}) - \Ch(\tilde\nabla^{\tilde\c^c}) \big)\,dt = \CS(\c^c) - \CS(\tilde \c^c).
\]
Using the relationship \eqref{eqn:stringycs} between the string potentials and the Chern--Simons forms, after integrating over the fibre we obtain
\[
\widehat{\int_\sone} \widehat a \,\Bigg({\int_\I}\,\widehat{ch}\big(\widehat{x}\big) \Bigg) = \check{a}\,\Bigg({\int_\sone}\CS(\c^c) - \CS(\tilde \c^c) \,\Bigg) = \check{a}\big(S(\c) - S(\tilde\c)\big) = 0
\]
since $S(\c) - S(\tilde\c)$ is exact and action of forms $\check{a}$ vanishes on exact forms by definition. Is is not hard to verify that $\Phi_1$ is a homomorphism, so we have the result.
\end{proof}

We now examine how $\Phi_1$ behaves with respect to the natural transformations $S$, $\check I$ and $\check a$.

\medskip
\noindent{\bf Curvature:}
For $\bm{\sE} - \bm{\sF} \in\dK(M)$ as above, we have
\[
S\big(\bm{\sE} - \bm{\sF} \big) = s(\triangle,\phi) - s(\tilde\triangle,\tilde\phi) = {\int_\sone}\big(\Ch(\nabla) - \Ch(\tilde\nabla)\big)
\]
and also
\[
\widehat{ch}\circ\Phi_1  \big(\bm{\sE} - \bm{\sF} \big) = {\int_\sone} \check{ch}\big((E,[\nabla]) - (F,[\tilde\nabla])\big) = {\int_\sone}\big(\Ch(\nabla) - \Ch(\tilde\nabla)\big)
\]
so that $\widehat{ch}\circ\Phi_1 = S$ as required.

\medskip
\noindent{\bf Underlying class:}
Consider the map $\dK(M) \to K^0(M\x\sone)$ given by
\[
\bm{\sE} - \bm{\sF} \longmapsto E-F,
\]
noting that this coincides with the composition of $\check I \colon \dK(M) \to K^{-1}(M)$ with the pullback  $q^\ast$ induced by the quotient map $q \colon M\x\sone \to \Sigma M^+$.
We also have
\[
\widehat I\circ\Phi_1\big(\bm{\sE} - \bm{\sF}\big) = {\int_\sone} \check I\big((E,[\nabla])-(F,[\tilde\nabla])\big) = {\int_\sone}\big(E-F\big),
\]
where the integration operations appearing in this expression are the $\sone$-integration maps on ordinary $K$-theory.
Recall the splitting $K^0(M\x\sone) \cong \im\pr^\ast\oplus\ker\imath^\ast$, with $\pr \colon M\x\sone \to M$ the projection and $\imath \colon m\mapsto (m,0)$ the canonical embedding, and also that the pullback $q^\ast$ is an isomorphism $K^{-1}(M)\to \ker\imath^\ast$ (the suspension isomorphism).
The $\sone$-integration on $K$-theory is defined as the composition of the projection onto $\ker\imath^\ast$ with the map $(q^\ast)^{-1}$. From this it follows that ${\int_\sone} \circ q^\ast = \id$ on the image of $\check I$ so that $\widehat I \circ\Phi_1 = \check I$ as required.

\medskip
\noindent{\bf Action of forms:}
Recall the characterisation of  $a(\{\omega\}) = \bm{\sE} - \ul{\bm{\sL\CC}}^n$ following Theorem \ref{odd diff K}. Given a structured $\Omega$ bundle $\bm{\sE} = (\sE,[\triangle,\phi])$, choose a representative $(\triangle,\phi)$ for the string datum and let $(E,\nabla)$ denote the caloron transform, then
\[
\Phi_1\circ a\,\big(\{\omega\}\big) =\widehat{\int_\sone} \Phi_0\big( (E,[\nabla]) - \ul{\bm{n}}\,\big).
\] 
Moreover, if $\c$ is a path from some Flat pair on $\sE$ to the chosen representative $(\triangle,\phi)$ such that $S(\c) = \omega$ mod exact, then 
\[
S(\c) = {\int_\sone} \CS(\c^c)  = {\int_\sone} \big(\tfrac{1}{2\pi} \omega \wedge d \theta\big),
\]
where $\c^c$ is the path on $E$ corresponding to $\c$ via the caloron transform.
The action of forms map $\check a$ on the Simons--Sullivan model $\edK(M)$ is defined in essentially the same fashion as above \cite[Section 3]{SSvec},  so we have that
\[
(E,[\nabla]) - \ul{\bm{n}} = \check a \big(\{\tfrac{1}{2\pi}\omega\wedge d\theta +\alpha\}\big)
\]
for some $\alpha$ in the kernel of ${\int_\sone}$.
Hence
\[
\widehat{\int_\sone} \Phi_0\big( (E,[\nabla]) - \ul{\bm{n}}\,\big) = \widehat{\int_\sone} \widehat a \big( \{\tfrac{1}{2\pi}\omega\wedge d\theta +\alpha\}\big) = \widehat a \big( \{\omega\}\big)
\]
so that $\widehat a = \Phi_1 \circ \check a$ as required. We now have the following
\begin{theorem}
\label{theorem:odddiffk}
The map $\Phi_1$ is an isomorphism.
\end{theorem}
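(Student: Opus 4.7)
The plan is to prove Theorem \ref{theorem:odddiffk} by applying the Five Lemma to the commutative diagram of differential cohomology exact sequences associated to $\dK$ and $\widehat K^{-1}$. All the preparatory work has essentially been done already: Proposition \ref{prop:welldefined} shows that $\Phi_1$ is a well-defined group homomorphism, and the three displayed verifications immediately preceding the theorem (curvature, underlying class, action of forms) establish that $\Phi_1$ intertwines the structure maps on both sides.

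More precisely, I would first assemble the commutative diagram
\[
\xy
(-35,10)*+{K^0(M)}="A1";
(-15,10)*+{\Omega^{even}(M;\CC)/\im d}="A2";
(20,10)*+{\dK(M)}="A3";
(45,10)*+{\cK^{-1}(M)}="A4";
(65,10)*+{0}="A5";
(-35,-10)*+{K^0(M)}="B1";
(-15,-10)*+{\Omega^{even}(M;\CC)/\im d}="B2";
(20,-10)*+{\widehat K^{-1}(M)}="B3";
(45,-10)*+{K^{-1}(M)}="B4";
(65,-10)*+{0.}="B5";
{\ar^-{ch} "A1";"A2"};
{\ar^-{\check a} "A2";"A3"};
{\ar^-{\check I} "A3";"A4"};
{\ar "A4";"A5"};
{\ar^-{ch} "B1";"B2"};
{\ar^-{\widehat a} "B2";"B3"};
{\ar^-{\widehat I} "B3";"B4"};
{\ar "B4";"B5"};
{\ar@{=} "A1";"B1"};
{\ar@{=} "A2";"B2"};
{\ar^-{\Phi_1} "A3";"B3"};
{\ar^-{\cong} "A4";"B4"};
\endxy
\]
whose top row is the exact sequence from Theorem \ref{odd diff K} and whose bottom row is the defining exact sequence of the fixed model $\widehat K^{-1}$. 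The rightmost vertical arrow is the natural isomorphism $\cK^{-1}(M) \cong K^{-1}(M)$ established at the end of Section \ref{S:omega}; commutativity of the right square is exactly $\widehat I\circ\Phi_1 = \check I$, while commutativity of the middle square is $\widehat a = \Phi_1\circ\check a$. The left square commutes trivially.

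Having set up the diagram, I would simply invoke the Five Lemma: since both rows are exact and the first, second, and fourth vertical arrows are isomorphisms, the middle vertical map $\Phi_1$ is an isomorphism as well.

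I do not anticipate any real obstacle in this final argument, since the substantive work has already been carried out in Proposition \ref{prop:welldefined} and the three compatibility checks preceding the theorem statement. If anything, the only point requiring a brief sanity check is to ensure that under the identification $\cK^{-1}(M) \cong K^{-1}(M)$, the underlying class map $\check I$ really does correspond to the composition $\widehat I \circ \Phi_1$, which in turn requires the observation (already made in the \textbf{Underlying class} paragraph) that $\sone$-integration in topological $K$-theory is left-inverse to $q^\ast$ restricted to the image of $\check I$. With this in hand the Five Lemma closes the argument.
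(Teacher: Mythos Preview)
Your proposal is correct and matches the paper's own proof essentially verbatim: the paper also applies the five-lemma to precisely this commuting diagram of exact sequences, with the outer vertical maps being identities (or the canonical isomorphism $\cK^{-1}\cong K^{-1}$). The compatibility checks you cite are exactly the ones the paper has already carried out before the theorem, so there is nothing more to add.
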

\begin{proof}
The result is obtained by applying the five-lemma to the commuting diagram
\[
\xy0;/r.20pc/:
(0,25)*+{K^0(M)}="1";
(35,25)*+{\Omega^{even}(M;\CC)}="2";
(70,25)*+{\dK(M)}="3";
(105,25)*+{K^{-1}(M)}="4";
(140,25)*+{0}="5";
(0,0)*+{K^0(M)}="7";
(35,0)*+{\Omega^{even}(M;\CC)}="8";
(70,0)*+{\widehat K^{-1}(M)}="9";
(105,0)*+{K^{-1}(M)}="10";
(140,0)*+{0}="11";
{\ar^{ch} "1";"2"};
{\ar^{\check a} "2";"3"};
{\ar^{\check I} "3";"4"};
{\ar^{} "4";"5"};
{\ar^{} "1";"7"};
{\ar^{ch} "7";"8"};
{\ar^{\widehat a} "8";"9"};
{\ar^{\widehat I} "9";"10"};
{\ar^{} "10";"11"};
{\ar^{} "2";"8"};
{\ar^{\Phi_1} "3";"9"};
{\ar^{} "4";"10"};
{\ar^{} "5";"11"};
\endxy
\]
where the rows are exact and all unlabelled vertical arrows are the identity.
\end{proof}

\begin{remark}
The isomorphism $\Phi_1$ does not depend on the choice of model $\widehat K^\bullet$. Indeed, if $\widehat K^\bullet$ and $\widehat L^\bullet$ are any two models for differential $K$-theory, let $\Phi_0 \colon \edK \to \widehat K^0$ and $\Phi'_0 \colon \edK \to \widehat L^0$  be the corresponding unique natural isomorphisms from the Simons-Sullivan model.
There is also a unique natural isomorphism $\Psi \colon \widehat K^\bullet \to \widehat L^\bullet$ preserving all the structure, and by uniqueness we have $\Phi'_0 = \Psi \circ \Phi_0$.
Since $\Psi$ preserves the $\sone$-integrations on $\widehat K^\bullet$ and $\widehat L^\bullet$, it follows that if $\Phi_1 \colon \dK \to \widehat K^{-1}$ and $\Phi'_1 \colon \dK \to \widehat L^{-1}$ are the maps given by \eqref{eqn:odddiffkmap} then $\Phi'_1 = \Psi \circ \Phi_1$.
\end{remark}

The content of Theorem \ref{theorem:odddiffk} is essentially that $\dK$ defines odd differential $K$-theory and, together with the above remark, we have a \emph{canonical} isomorphism from $\dK$ to any other model for odd differential $K$-theory.
The results of Bunke--Schick tell us that when we have an $\sone$-integration map we may impose a uniqueness condition on this isomorphism.
As we now show, the inverse caloron transform induces a partial $\sone$-integration map
\[
{\int_\sone} \colon \edK(M\x\sone) \lo \dK(M)
\]
and the isomorphism appearing in Theorem \ref{theorem:odddiffk} is the \emph{unique} isomorphism respecting this operation. 
We begin by defining the partial $\sone$-integration. Just as for ordinary $K$-theory we have a splitting
\[
\edK(M\x\sone) \cong \im\pr^\ast\oplus\ker\imath^\ast.
\]
We emphasise that elements of $\ker\imath^\ast$ are of the form $\bm{E}- \ul{\bm{n}}$ where $\imath^\ast\bm{E}$ is stably flat, so by adding trivial structured vector bundles as necessary every element of $\ker\imath^\ast$ may be expressed in the form $(E',[\nabla]) - \ul{\bm{n}}$, where $E' \to M\x\sone$ is framed over $M_0$ and $\nabla$ is a framed connection.
The inverse caloron transform functor induces a well defined surjective homomorphism
$ \ker\imath^\ast \lo \dK(M)$ by sending
\[
(E,[\nabla]) - \ul{\bm{n}} \longmapsto (\sE,[\triangle,\phi]) - \ul{\bm{\sL\CC}}^n
\]
where $(\sE,\triangle,\phi)$ is the inverse caloron transform of $(E,\nabla)$.
We define the partial $\sone$-integration map
\[
{\int_\sone} \colon \edK(M\x\sone) \lo \dK(M)
\]
as the composition of the inverse caloron transform with the projection onto $\ker\imath^\ast$.
By construction ${\int_\sone}$ is natural, it vanishes on the image of $\pr^\ast$ and is compatible with the curvature and underlying class maps, that is
\[
{\int_\sone} \circ \check{ch} = S \circ {\int_\sone}
\;\;\mbox{and}\;\;
{\int_\sone} \circ \check I = \check I \circ {\int_\sone}.
\]
We also have the commuting diagram
\[
\xy
(0,25)*+{\edK(M\x\sone)}="1";
(35,25)*+{\widehat K^0(M\x\sone)}="2";
(0,0)*+{\dK(M)}="3";
(35,0)*+{\widehat K^{-1}(M)}="4";
{\ar^{\Phi_0} "1";"2"};
{\ar^{\widehat{\int_\sone}} "2";"4"};
{\ar^{{\int_\sone}} "1";"3"};
{\ar^{\Phi_1} "3";"4"};
\endxy
\]
which implies that ${\int_\sone}$ has all the properties required of an $\sone$-integration on $\edK$.
\begin{proposition}
The map $\Phi_1$ is the unique natural isomorphism $\dK \to \widehat K^{-1}$ respecting the integration map ${\int_\sone}\colon \edK(\cdot \x\sone) \to \dK(\cdot )$ defined via the inverse caloron transform.
\end{proposition}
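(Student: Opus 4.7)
The plan is to exploit the surjectivity of the inverse-caloron $\sone$-integration map ${\int_\sone}\colon \edK(M\x\sone) \to \dK(M)$, which is essentially immediate from its construction: the inverse caloron transform is already noted to give a surjective homomorphism $\ker\imath^\ast \to \dK(M)$, while the canonical splitting $\edK(M\x\sone) \cong \im\pr^\ast \oplus \ker\imath^\ast$ shows that projecting onto $\ker\imath^\ast$ is surjective. Concretely, for any $\bm{\sE} - \ul{\bm{\sL\CC}}^n \in \dK(M)$ the caloron transform furnishes a representative $(E,\nabla)$ with $E \to M\x\sone$ framed over $M_0$ and $\nabla$ a framed connection, so $(E,[\nabla]) - \ul{\bm n}$ lies in $\ker\imath^\ast$ and maps under ${\int_\sone}$ to $\bm{\sE} - \ul{\bm{\sL\CC}}^n$.

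Next I would unpack what it means for a natural isomorphism $\Psi\colon \dK \to \widehat K^{-1}$ (preserving curvature, underlying class and action of forms) to respect the $\sone$-integration. Since by \cite[Theorem 3.10]{BS3} there is a unique natural isomorphism $\Phi_0\colon \edK \to \widehat K^0$ preserving all the even-degree structure, the only sensible interpretation of compatibility is that the commuting square displayed immediately above the statement---with $\Phi_0$ along the top, the inverse-caloron ${\int_\sone}$ on the left, $\widehat{\int_\sone}$ on the right, and $\Psi$ along the bottom---remains commutative. That is, one requires
\begin{equation*}
\Psi \circ {\int_\sone} = \widehat{\int_\sone} \circ \Phi_0,
\end{equation*}
which is precisely the relation used to define $\Phi_1$.

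With these two ingredients uniqueness is immediate. For any $x \in \dK(M)$ choose $y \in \edK(M\x\sone)$ with ${\int_\sone} y = x$; then
\begin{equation*}
\Psi(x) = \Psi\!\left({\int_\sone} y\right) = \widehat{\int_\sone}\Phi_0(y) = \Phi_1\!\left({\int_\sone} y\right) = \Phi_1(x),
\end{equation*}
so $\Psi = \Phi_1$. There is no substantive obstacle here; the existence half of the uniqueness statement, namely that $\Phi_1$ itself respects all the prescribed structure including the $\sone$-integration, has already been verified in the material preceding the proposition, and the uniqueness half reduces to the simple surjectivity observation above.
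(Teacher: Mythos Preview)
Your argument is correct and follows essentially the same route as the paper: both reduce uniqueness to the identity $\Psi \circ {\int_\sone} = \widehat{\int_\sone}\circ \Phi_0 = \Phi_1 \circ {\int_\sone}$ together with the surjectivity of ${\int_\sone}$. You spell out the surjectivity and the meaning of ``respecting the integration map'' more explicitly than the paper does, but the core idea is identical.
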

\begin{proof}
Suppose that $\Psi \colon \dK \to \widehat K^{-1}$ is another such natural isomorphism.
Then we have
\[
\Phi_1 \circ {\int_\sone} = \widehat{\int_\sone}\circ \Phi_0 = \Psi \circ {\int_\sone}
\]
which implies that $\Phi_1 = \Psi$ since ${\int_\sone}$ is clearly surjective.
\end{proof}

We make a final remark on the odd differential Chern character on $\dK$.
The even differential Chern character in the Simons--Sullivan model is the natural map
\[
\check{\Ch} \colon \edK(M) \lo \check H^{even}(M) 
\]
mapping into even differential characters that sends $(E,[\nabla])- (F,[\nabla'])$ to the even differential character on $M$ defined by the difference $\CS(\nabla) - \CS(\nabla')$ of Chern--Simons forms.
Using this, we may define the odd differential Chern character similarly to the map $\Phi_1$ of Theorem \ref{theorem:odddiffk}.
That is, we send $\bm{\sE} - \bm{\sF}$ to the odd differential character
\[
\check{\Ch} \big (\bm{\sE} - \bm{\sF}\big) := {\int_\sone}\check{\Ch} \big( (E,[\nabla]) - (F,[\tilde \nabla]) \big)
\]
where $(E,\nabla)$ and $(F,\tilde \nabla)$ are the caloron transformed bundles corresponding to some choice of representatives of the string data.
The proof that this is well-defined follows along the same lines of Proposition \ref{prop:welldefined} using the Homotopy Formula.

\subsection{Hermitian $\Omega$ vector bundles}
We note that the entire approach of the preceding sections applies immediately to Hermitian $\Omega$ vector bundles equipped with compatible module connections and Higgs fields.
In particular, the string potential exactness condition that defines string data leads to a Hermitian version of the semigroup $\struct$ (and its various sub-semigroups) which we shall denote by $\struct_\RR$.
Proofs of all the preceding results proceed by analogy, so writing $\dK_\RR(M)$ for the rank zero subgroup of $K(\struct_\RR(M))$ we have the following
\begin{theorem}
The functor $M\mapsto \dK_\RR(M)$ with the natural transformations $S$, $\check I$ and $\check a$ defines odd differential $K$-theory.
\end{theorem}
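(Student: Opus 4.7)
The plan is to mirror, step by step, the construction of $\dK$ carried out in the preceding subsections, verifying at each stage that the constructions restrict compatibly to Hermitian $\Omega$ bundles with compatible module connections and Higgs fields. First I would establish the Hermitian analogues of the two key technical ingredients behind $\dK_\RR$: existence of inverses in the classified subsemigroup, and surjectivity of the $\sS$ map on the topologically trivial subsemigroup. For the Hermitian analogue of Lemma \ref{lemma:classinverse}, note that if $(\triangle,\phi)$ is compatible with the Hermitian structure, then parallel transport in the caloron transform is unitary, so the Higgs field holonomy $\hol_\phi$ takes values in $U(n)$; the homotopy $X_t$ constructed in Lemma \ref{lemma:classinverse} is a product of unitary matrices and thus stays in $U(2n)$, so the same calculation shows that $(g^{-1})^\ast\bm{\sE}(n)$ inverts $g^\ast\bm{\sE}(n)$ in the Hermitian analogue $\struct_{cl}^\RR(M)$. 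For the Hermitian analogue of Theorem \ref{surjective}, observe that the model connections $\omega = ifd\theta$ and $\omega = i\rho\alpha + ifd\theta$ constructed on the trivial line bundle over $\RR^n$ are purely imaginary (taking the function $f$ real), so yield unitary framed connections; the induced pairs $(\triangle,\phi)$ on $\ul{L\CC}$ are consequently compatible with the standard Hermitian structure, so $\sS$ remains surjective in the Hermitian case.

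With these two ingredients in hand, the existence-of-inverses argument (Theorem \ref{theorem:inversesexist}) extends verbatim: for any $\bm{\sE} \in \struct_\RR(M)$ represent $\sE$ via its unitary Higgs field holonomy $g \colon M \to U(n)$ and use an element of $\struct_0^\RR(M)$ supplied by the Hermitian surjectivity of $\sS$ to cancel the string-potential discrepancy between $(\triangle,\phi)$ and the pullback pair $(g^\ast\triangle(n),\phi(n))$. This gives that $\dK_\RR(M)$ is a well-defined abelian group. Moreover, the string form, the underlying-class map, and the action-of-forms map all restrict naturally to the Hermitian setting, and the axioms of a differential extension follow directly by re-running the arguments from the non-Hermitian subsections in this restricted context.

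To show that $\dK_\RR$ is in fact a model for odd differential $K$-theory, I would mimic the isomorphism argument of Theorem \ref{theorem:odddiffk}. Fix any axiomatic model $(\widehat K^\bullet,\widehat{ch},\widehat I,\widehat a)$ for differential $K$-theory with $\sone$-integration. By the Bunke--Schick uniqueness theorem there is a unique natural isomorphism $\Phi_0^\RR$ from the Hermitian Simons--Sullivan model to $\widehat K^0$ preserving all structure. The Hermitian caloron correspondence sends a Hermitian $\Omega$ bundle equipped with compatible connective data to a framed Hermitian vector bundle with framed unitary connection, so the formula
\[
\Phi_1^\RR(\bm{\sE} - \bm{\sF}) := \widehat{\int_\sone} \Phi_0^\RR \big((E, [\nabla]) - (F, [\tilde \nabla])\big)
\]
defines a natural homomorphism $\dK_\RR \to \widehat K^{-1}$, whose well-definedness is a line-for-line translation of Proposition \ref{prop:welldefined} using the Homotopy Formula in the unitary category. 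The five-lemma chase of Theorem \ref{theorem:odddiffk} then yields that $\Phi_1^\RR$ is an isomorphism compatible with the curvature, underlying-class, and action-of-forms transformations.

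The main obstacle is the first step: checking that the two non-trivial intermediate constructions, namely the explicit $U(2n)$ homotopy used in Lemma \ref{lemma:classinverse} and the explicit realisation of arbitrary forms by string potentials on $\RR^n$ used in Theorem \ref{surjective}, both remain inside the unitary subcategory. Once this is confirmed, every other step in the development of $\dK_\RR$ is a direct translation of the corresponding result in the non-Hermitian setting.
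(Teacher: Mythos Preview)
Your proposal is correct and follows exactly the approach the paper takes: the paper's entire treatment of this theorem is the single assertion that ``Proofs of all the preceding results proceed by analogy,'' and your sketch is simply a careful fleshing-out of that analogy, verifying that the key technical steps (the $U(2n)$-valued homotopy in Lemma~\ref{lemma:classinverse}, the explicit connections in Theorem~\ref{surjective}, and the caloron-based isomorphism of Theorem~\ref{theorem:odddiffk}) remain valid in the unitary category. If anything, you have supplied considerably more detail than the paper itself.
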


\section{The Tradler--Wilson--Zeinalian model}\label{TWZ}
In a recent paper \cite{TWZ}, Tradler, Wilson and Zeinalian construct an elementary differential extension of odd $K$-theory, which we shall refer to as the \emph{TWZ extension}.
As previously mentioned, differential extensions of odd $K$-theory are non-unique and it is not yet known that the TWZ extension is a model for odd differential $K$-theory.
This is the content of the present section: by constructing an explicit isomorphism between the $\Omega$ model and that of Tradler--Wilson--Zeinalian we complete the construction of \cite{TWZ}.

The TWZ extension refines the homotopy theoretic model  $K^{-1}(M) = [M,U]$ using an equivalence relation on the space of smooth maps $M \to U$ that is strictly finer than smooth homotopy.
We briefly recall the salient points of the construction.
\begin{definition}
Two smooth maps $g_0,g_1 \colon M \to U$ are \emph{$\CS$-equivalent}, denoted $g_0 \sim_{\CS} g_1$, if and only if there exists a smooth homotopy $G$ from $g_0$ to $g_1$ such that
\[
\sum_{j=0}^\infty \frac{-j!}{(2j)!}\bigg(\!-\!\frac{1}{2\pi i}\bigg)^{j+1} \int_0^1 \tr\big(g_t^{-1} \d_t g_t \cdot (g_t^{-1} d g_t)^{2j} \big) dt = 0 \mod\mbox{exact}
\]
where $g_t :=G(\ ,t) \colon M\to U$.
The set $\dL(M)$ of equivalence classes inherits an abelian group structure from the block sum operation on maps $M \to U$ and the assignment $M\mapsto \dL(M)$ defines a contravariant functor on the category of compact manifolds with corners.
\end{definition}

The TWZ extension $\dL$ is a differential extension of odd $K$-theory, so it comes with curvature, underlying class and action of forms maps.

\medskip
\noindent{\bf Curvature:} The curvature map $R\colon \dL(M) \to \Omega^{odd}_{d=0}(M)$ is given by its action on $\CS$-equivalence classes as
\[
[g]_{\CS}\longmapsto \sum_{j=0}^\infty \frac{-j!}{(2j+1)!}\bigg(\!-\!\frac{1}{2\pi i}\bigg)^{j+1} \tr\big((g^{-1}dg)^{2j+1}\big).
\]

\medskip
\noindent{\bf Underlying class:} The underlying class map $J\colon \dL(M) \to K^{-1}(M)$ is simply the map
\[
[g]_\CS \longmapsto [g],
\]
on $\CS$-equivalence classes, where the equivalence classes on the right hand side are taken modulo smooth homotopy.

\medskip
\noindent{\bf Action of forms:} The action of forms map $b\colon \Omega^{even}(M)/\im d \to \dL(M)$ is given by sending $\{\omega\}$ to the class $[g]_\CS$, where for any choice $g \in[g]_\CS$ there is a smooth homotopy $G$ from the constant map to the identity $\id$ to $g$ such that
\[
\sum_{j=0}^\infty \frac{-j!}{(2j)!}\bigg(\!-\!\frac{1}{2\pi i}\bigg)^{j+1} \int_0^1 \tr\big(g_t^{-1} \d_t g_t \cdot (g_t^{-1} d g_t)^{2j} \big) dt  = \omega \mod \mbox{exact},
\]
with $g_t := G(\ ,t)$ as before.

\medskip
Consider now the natural map
\[
\imath\colon \dL(M) \to \dK_\RR(M)
\]
that sends
\[
[g]_\CS \longmapsto g^\ast\bm{\sE}(n) - \ul{\bm{\sL\CC}}^n
\]
for some choice of representative $g \colon M\to U(n)$ of the $\CS$-equivalence class.
We observe that $\imath$ is well defined, for if $g_0\sim_\CS g_1$ then take any smooth homotopy $G$ from $g_0$ to $g_1$.
Since $M\x\I$ is compact, we may without loss of generality suppose that the image of $G$ is contained in some $U(k)$ and so by \eqref{eqn:homstringform} we have, modulo exact forms,
\begin{multline*}
\cS\big(g_0^\ast\triangle(k),g_0^\ast\phi(k);g_1^\ast\triangle(k),g_1^\ast\phi(k)\big) = \int_0^1 g_t^\ast\imath_{\d_t g_t}s\big(\t(k),\phi(k)\big)\,dt\\
= \int_0^1 g_t^\ast\imath_{\d_t g_t} \sum_{j=0}^\infty \frac{-j!}{(2j+1)!}\bigg(\!-\!\frac{1}{2\pi i}\bigg)^{j+1} \tr\big(\Theta^{2j+1}\big) \,dt\\
= \sum_{j=0}^\infty \frac{-j!}{(2j)!}\bigg(\!-\!\frac{1}{2\pi i}\bigg)^{j+1} \tr\big(g_t^{-1}\d g_t\cdot (g_t^{-1}dg_t)^{2j}\big)\,dt = 0
\end{multline*}
where $g_0,g_1$ are viewed as maps $M \to U(k)$ by taking the block sum with the constant map at the identity as necessary.
It is easy to see that $R = S\circ \imath$ and $J =\check I\circ \imath$ so that $\imath$ preserves the curvature and underlying class maps.
Moreover by comparing $b$ with the characterisation of the action of forms map $\check a$ on $\dK$ following Theorem \ref{odd diff K}, we see that $\imath\circ b = \check a$, so that $\imath$ is a natural transformation of differential extensions. 
\begin{theorem}
The map $\imath$ is an isomorphism.
\end{theorem}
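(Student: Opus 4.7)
The plan is to apply the five-lemma exactly as in the proof of Theorem \ref{theorem:odddiffk}. Both $\dL$ and $\dK_\RR$ are differential extensions of odd $K$-theory in the Bunke--Schick sense, so each sits in a four-term exact sequence of the form
\[
K^0(M) \xrightarrow{\;ch\;} \Omega^{even}(M;\CC)/\im d \xrightarrow{\;\;} \check K^{-1}(M) \xrightarrow{\;\;} K^{-1}(M) \lo 0,
\]
where the two interior arrows are the action of forms and the underlying class, respectively.

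The bulk of the work has already been done in the paragraph immediately preceding the theorem. There it is verified that $\imath$ is well defined on $\CS$-equivalence classes, via the explicit homotopy computation using \eqref{eqn:homstringform}; that $\imath$ is a group homomorphism, because block sum of smooth maps $M \to U$ corresponds to direct sum of pullback bundles together with the canonical identification of $\bm{\sE}(n+m)$ with $\bm{\sE}(n) \oplus \bm{\sE}(m)$ along the block-diagonal inclusion $U(n)\times U(m) \hookrightarrow U(n+m)$; and that $\imath$ intertwines the three structural natural transformations, namely $R = S \circ \imath$, $J = \check I \circ \imath$, and $\check a = \imath \circ b$.

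First I would arrange these compatibilities into a single commutative diagram with exact rows, with $\imath$ as the middle vertical arrow and the identity map in the four flanking vertical positions (on $K^0(M)$, $\Omega^{even}(M;\CC)/\im d$, $K^{-1}(M)$, and $0$). The five-lemma then immediately concludes that $\imath$ is an isomorphism. Because every substantive step has already been completed in the preceding discussion, there is no genuine obstacle remaining; the argument is a direct transcription of that of Theorem \ref{theorem:odddiffk} with the pair $(\widehat K^{-1},\Phi_1)$ replaced by $(\dK_\RR,\imath)$.
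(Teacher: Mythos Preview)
Your proposal is correct and is precisely the paper's own argument: the paper's proof consists of the single sentence ``The proof is essentially identical to that of Theorem \ref{theorem:odddiffk},'' which is exactly the five-lemma applied to the commuting ladder of exact sequences built from the compatibilities $R = S\circ\imath$, $J = \check I\circ\imath$, and $\check a = \imath\circ b$ established just before the statement.
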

\begin{proof}
The proof is essentially identical to that of Theorem \ref{theorem:odddiffk}.
\end{proof}
\begin{remark}
This result demonstrates that every element of $\dK_\RR(M)$ may be written in the form $g^\ast\bm{\sE}(n) - \ul{\bm{\sL\CC}}^n$ for some $g\colon M\to U(n)$.
In particular, it implies that the canonical structured bundles $\bm{\sE}(n)$ over $U(n)$ classify $\dK_\RR$ and it also gives a homotopy theoretic interpretation of the Hermitian $\Omega$ model.
\end{remark}

\section*{Acknowledgements}
VSS acknowledges the support of a University of Adelaide Master of Philosophy Scholarship.  PH and VSS thank the Galileo Galilei Institute for Theoretical Physics for their hospitality and the INFN for partial support during the completion of this work. PH and RFV acknowledge the support of International Centre for Theoretical Physics, Trieste, and the Erwin Schr\"odinger Institute, Vienna. The authors would also like to thank Steve Rosenberg for helpful discussions and the referee for various useful suggestions that improved the clarity and quality of the paper.

\bibliographystyle{plain}

\begin{thebibliography}{10}

\bibitem{AtiyahK}
M.~F. Atiyah.
\newblock {\em {$K$}-theory}.
\newblock Lecture notes by D. W. Anderson. W. A. Benjamin, Inc., New
  York-Amsterdam, 1967.

\bibitem{BS3}
U.~Bunke and T.~Schick.
\newblock Smooth {$K$}-theory.
\newblock {\em Ast{\'e}risque}, (328):45--135 (2010), 2009.

\bibitem{BS2}
U.~Bunke and T.~Schick.
\newblock Uniqueness of smooth extensions of generalized cohomology theories.
\newblock {\em J. Topol.}, 3(1):110--156, 2010.

\bibitem{BS1}
U.~Bunke and T.~Schick.
\newblock Differential {$K$}-theory: a survey.
\newblock In {\em Global {D}ifferential {G}eometry}, volume~17 of {\em Springer
  Proceedings in Mathematics}, pages 303--358. Springer, Heidelberg, 2012.

\bibitem{ChS1}
J.~Cheeger and J.~Simons.
\newblock Differential characters and geometric invariants.
\newblock In {\em Geometry and topology ({C}ollege {P}ark, {M}d., 1983/84)},
  volume 1167 of {\em Lecture Notes in Math.}, pages 50--80. Springer, Berlin,
  1985.

\bibitem{CS}
S.~S. Chern and J.~Simons.
\newblock Characteristic forms and geometric invariants.
\newblock {\em Ann. of Math. (2)}, 99:48--69, 1974.

\bibitem{FL}
D.~S. Freed and J.~Lott.
\newblock An index theorem in differential {$K$}-theory.
\newblock {\em Geom. Topol.}, 14(2):903--966, 2010.

\bibitem{G}
E.~Getzler.
\newblock The odd {C}hern character in cyclic homology and spectral flow.
\newblock {\em Topology}, 32(3):489--507, 1993.


\bibitem{HS}
M.~J. Hopkins and I.~M. Singer.
\newblock Quadratic functions in geometry, topology, and {M}-theory.
\newblock {\em J. Differential Geometry}, 70(3):329--452, 2005.

\bibitem{Steve}
A. Larra\'{i}n--Hubach, Y. Maeda, S. Rosenberg and F. Torres--Ardila.
\newblock Equivariant, string and leading order characteristic classes associated to fibrations.
\newblock {arxiv pre-print: 1309.2692v2}

\bibitem{MS}
M.~K. Murray and D.~Stevenson.
\newblock Higgs fields, bundle gerbes and string structures.
\newblock {\em Commun. Math. Phys.}, 243(3):541--555, 2003.

\bibitem{MV}
M.~K. Murray and R.~F. Vozzo.
\newblock The caloron correspondence and higher string classes for loop groups.
\newblock {\em J. Geom. Phys.}, 60(9):1235--1250, 2010.

\bibitem{PS}
A.~Pressley and G.~Segal.
\newblock {\em Loop groups}.
\newblock Oxford Mathematical Monographs. The Clarendon Press Oxford University
  Press, New York, 1986.
\newblock Oxford Science Publications.

\bibitem{Sch}
V.~S. Schlegel.
\newblock {\em The Caloron Correspondence and Odd Differential $K$-theory}.
\newblock Master of Philosophy thesis, University of Adelaide, 2013.

\bibitem{SSvec}
J.~Simons and D.~Sullivan.
\newblock Structured vector bundles define differential {$K$}-theory.
\newblock In {\em Quanta of maths}, volume~11 of {\em Clay Math. Proc.}, pages
  579--599. Amer. Math. Soc., Providence, RI, 2010.

\bibitem{TWZ}
T.~Tradler, S.O. Wilson, and M.~Zeinalian.
\newblock An elementary differential extension of odd {$K$}-theory.
\newblock {\em Journal of K-theory: K-theory and its Applications to Algebra,
  Geometry, and Topology}, FirstView:1--31, 4 2013.

\bibitem{V}
R.~F. Vozzo.
\newblock {\em Loop Groups, Higgs Fields and Generalised String Classes}.
\newblock PhD thesis, University of Adelaide, 2009.



\end{thebibliography}

\end{document}